\definecolor{colComments}{rgb}{1,0,0}
\theoremstyle{plain}  
\numberwithin{equation}{section} 
\newtheorem{Definition}[equation]{Definition} 
\newtheorem{Definition/Lemma}[equation]{Definition/Lemma}
\newtheorem{Lemma}[equation]{Lemma}
\newtheorem{Theorem}[equation]{Theorem}
\newtheorem{Proposition}[equation]{Proposition}
\newtheorem{Corollary}[equation]{Corollary}
\newtheorem{Remark}[equation]{Remark}
\newtheorem{Notation}[equation]{Notation}
\newtheorem{Notation/Lemma}[equation]{Notation/Lemma}
\title{Character tables of Sylow $p$-subgroups
         of the \\ Steinberg triality groups ${^3}D_4(q^3)$}
\author{Yujiao Sun}
\affil{\footnotesize School of Mathematics and Statistics,
         Beijing Institute of Technology,\\
      Beijing 100081, PR China}
\affil{\footnotesize E-mail:
           yujiaosun@bit.edu.cn}
\date{}
\begin{document}
\maketitle

\begin{abstract}
We determine the character tables of Sylow $p$-subgroups $U$
of the Steinberg triality groups ${^3}D_4(q^3)$,
where $q$ is a power of an odd prime $p$.
\end{abstract}

{\small \textbf{Keywords: }Character table, Sylow $p$-subgroup, Steinberg triality group.}

{\small \textbf{2000 Mathematics Subject Classification: }Primary 20C33, 20C15. Secondary 20D15, 20D20.}



\section{Introduction}
Let $p$ be a fixed prime,
$\mathbb{N}^*$ the set of positive integers,
$q:=p^k$ with a fixed $k\in \mathbb{N}^*$,
$\mathbb{F}_q$ the finite field with $q$ elements
and $U_n(q)$ ($n\in \mathbb{N}^*$)
the group of upper unitriangular $n\times n$-matrices
(i.e. upper triangular matrices with $1$s on the main diagonal)
with entries in $\mathbb{F}_q$.
Then $U_n(q)$ is a Sylow $p$-subgroup of the general linear group $GL_n(q)$
of invertible $n\times n$-matrices over $\mathbb{F}_q$.

G. Higman \cite{hig} formulated a longstanding conjecture:
for a fixed $n$, the number of conjugacy classes of $U_n(q)$
is determined by a polynomial in $q$ with integral coefficients depending on $n$.
G. Lehrer \cite{leh} refined Higman's conjecture:
the number of irreducible characters of $U_n(q)$
of degree $q^c$ ($c\in \mathbb{N}$)
is an integer polynomial in $q$.
I. M. Isaacs \cite{isa1} proved that
the degrees of complex irreducible characters
of $\mathbb{F}_q$-algebra groups are powers of $q$.
Then I. M. Isaacs \cite{isa2} gave a strengthened form of Lehrer's conjecture:
the number of irreducible characters of degree $q^c$
is some polynomial in $(q-1)$ with non-negative integeral coefficients.
A. Vera-L\'{o}pez and J. M. Arregi  \cite{vla}
proved Higman's conjecture for $n\leq 13$.
Recently, I. Pak and A. Soffer \cite{ps} verified Higman's conjecture for $n\leq 16$.

It is natural to consider Higman's conjecture,
Lehrer's conjecture and Isaacs' conjecture for
the Sylow $p$-subgroups of other finite groups of Lie
type.
Let $G(q)$ be a finite group of Lie type,
$U(q)$ a Sylow $p$-subgroup of $G(q)$,
$k(U(q))$ the number of conjugacy classes,
$\#\mathrm{Irr}(U(q))$ the number of all complex irreducible characters,
$\#\mathrm{Irr}(U(q), q^c)$ the number of complex irreducible characters of degree $q^c$,
$\#M(U(q))$ the number of pairwise orthogonal irreducible constituents of
the regular $U(q)$-module $\mathbb{C}U(q)$.
S. M. Goodwin,  P. Mosch and G. R{\"o}hrle
\cite{GR2009, GMR2014, GMR2016}
obtained an algorithm
and calculated  $k(U(q))$ for $U(q)$ of rank at most 8, except $E_8$.

For the Sylow $p$-subgroup $U(q)$ of the Chevalley group $D_4(q)$ of type $D_4$,
F. Himstedt, T. Le and K. Magaard \cite{HLMD42011}
determined the complex irreducible characters
and calculated $\#\mathrm{Irr}(U(q), q^c)$.
M. Jedlitschky \cite[Appendix A.3]{Markus1}
decomposed the regular module $\mathbb{C}U(q)$ with $p\neq 2$ into irreducible constituents,
and obtained $\#\mathrm{Irr}(U(q), q^c)$ and $\#\mathrm{Irr}(U(q))$.
S. M. Goodwin,  T. Le and K. Magaard
\cite{GLM2017}
constructed the {\it generic character table} of $U(q)$.
Higman's conjecture,
Lehrer's conjecture and Isaacs' conjecture are true for $U(q)$.

For the Sylow $p$-subgroup $U(q)$ $(p>3)$ of the Chevalley group $G_2(q)$ of type $G_2$,
F. Himstedt, T. Le and K. Magaard \cite{HLM2016}
determined most irreducible characters of $U(q)$
by parameterizing {\it midafis} (minimal degree almost faithful irreducible characters).

S. M. Goodwin, T. Le, K. Magaard and A. Paolini \cite{GLMP2016}
parameterized the irreducible characters of
the Sylow $p$-subgroup $U(q)$ $(p>2)$ of the Chevalley group $F_4(q)$ of type $F_4$.

Let $U$ be a Sylow $p$-subgroup of
the Steinberg triality group ${{^3D}_4}(q^3)$.
T. Le \cite{Le} constructed and counted all ordinary irreducible characters of $U$,
mainly using Clifford theory.

In this paper,
let $p$ be a fixed odd prime,
$q:=p^k$ a fixed power of $p$.
In Section \ref{sec:Sylow p-subgroup 3D4},
we recall the construction of a Sylow $p$-subgroup $U$ of the Steinberg triality group ${^3}D_4(q^3)$.
In Section \ref{conjugacy classes},
we establish the conjugacy classes
and the explicit elements of the conjugacy classes
of the Sylow $p$-subgroup $U$.
Based on T. Le's work \cite{Le},
we determine the specific constructions
of all of the pairwise orthogonal complex irreducible characters of $U$
by Clifford's Theorem (see \cite[Section 11]{CR1})
in Section \ref{sec:irreducible characters}.
Finally, we further  calculate the values of the irreducible characters and obtain the character table of $U$
in Section \ref{sec:character table}.

If $p>2$, the classification of conjugacy classes of $U$ and the construction of irreducible character of $U$
are uniform.
If $p=2$, the Lemma \ref{zeta_u} does not hold, so the analysis is different.
Some results still hold when their proofs do not use this lemma.
For example, the conjugacy classes of  $x=x(0, 0, 0, t_4, t_5, t_6)\in U$
($t_4\in \mathbb{F}_{q^3},\ t_5, t_6\in  \mathbb{F}_q$) for $p=2$ are the same as these in Table \ref{table:conjugacy classes-3D4} for $p>2$,
but the conjugacy classes of $x_3(t_3^*)\in U$
($t_3^* \in \mathbb{F}_{q^3}^*$) for $p=2$ and these for $p>2$ are different
(see Remark \ref{conjugacy classes p=2}).
Similarly, the irreducible characters in the family $\mathfrak{F}_4$ of Proposition \ref{construction of irr. char. of 3D4}
do not hold when $p=2$. The construction of this case can be found in  \cite[3.3]{Le}.
The irreducible characters in the other cases for $p>2$ and for $p=2$ are the same
(see Remark \ref{irr. characters p=2}).

The main results in this paper are summarized in the following theorem:
\begin{Theorem}
Let ${^3}D_4(q^3)$ be the Steinberg triality group over $\mathbb{F}_{q^3}$ when $p>2$,
and let $U$ be a Sylow $p$-subgroup of ${^3}D_4(q^3)$.
The conjugacy classes of $U$ are shown in Proposition \ref{prop:conjugacy classes-3D4}.
The explicit constructions of irreducible characters of $U$ are given in Proposition \ref{construction of irr. char. of 3D4}.
In Proposition \ref{prop:character table-3D4}, the character table of $U$ is obtained.
\end{Theorem}

Here we fix some notation:
Let
$K$ a field,
$K^*$ the multiplicative group $K\backslash\{0\}$ of $K$,
$K^+$ the additive group of $K$,
$\mathbb{F}_q$ the finite field with $q$ elements,
$\mathbb{F}_{q^3}$ the finite field with $q^3$ elements,
$\mathbb{C}$ the complex field,
$\mathbb{Z}$ the set of all integers,
$\mathbb{N}$ the set $\{0,1,2, \dots \}$ of all non-negative integers,
$\mathbb{N}^*$ the set $\{1,2,\dots \}$ of all positive integers.


\section{Sylow $p$-subgroups $U$ of the Steinberg triality group ${^3}D_4(q^3)$}
\label{sec:Sylow p-subgroup 3D4}
In this section, we recall the root system of $D_4$,
the construction
of a Sylow $p$-subgroup $U$ of the Steinberg triality group ${^3}D_4(q^3)$,
and the commutator relations of $U$.
The main references are
\cite{Carter1} and \cite{Hum}.

Let $\mathcal{V}_4$ be a fixed Euclidean space with the usual orthonormal basis
$\varepsilon_1$, $\varepsilon_2$, $\varepsilon_3$, $\varepsilon_4$,
where the inner product $(\ ,\ )$ is the usual one.
The set
$
 \Phi_{D_4}=\{\pm\varepsilon_i\pm\varepsilon_j \mid 1\leq i< j\leq 4\}
$
is a root system of type ${D}_4$, and $(r, r)=2$ for all $r\in \Phi_{D_4}$.
The fundamental system of roots of the root system $\Phi_{D_4}$ is
$
 \Delta_{D_4}=\{\varepsilon_1-\varepsilon_2,\ \varepsilon_2-\varepsilon_3,\ \varepsilon_3-\varepsilon_4,\ \varepsilon_3+\varepsilon_4\}.
$
The positive system (relative to $\Delta_{D_4}$) of roots of $\Phi_{D_4}$ is
$
 \Phi_{D_4}^+:
        =\{\varepsilon_i\pm\varepsilon_j  \mid  1 \leq i< j\leq4\}.
$
Let
$
 r_1:=\varepsilon_1-\varepsilon_2,\ r_2:=\varepsilon_2-\varepsilon_3,
 \ r_3:=\varepsilon_3-\varepsilon_4,\ r_4:=\varepsilon_3+\varepsilon_4$,
then $\Delta_{D_4}=\{r_1, r_2, r_3, r_4\}$.

Let $r:=\sum_{i=1}^4{x_ir_i}\in \mathcal{V}_4$, $s:=\sum_{i=1}^4{y_ir_i}\in \mathcal{V}_4$,
then we write
$r\prec s$,
if $\sum_{i=1}^4{x_i}<\sum_{i=1}^4{y_i}$,
or if $\sum_{i=1}^4{x_i}= \sum_{i=1}^4{y_i}$ and
the first non-zero coefficient $x_i-y_i$ is positive.
If $r=\sum_{i=1}^4{x_ir_i}\in \Phi_{D_4}$,
the {height} of $r$ (relative to $\Delta_{D_4}$) is denoted by $\mathrm{ht}(r):=\sum_{i=1}^4{x_i}$.
Let
$
 r_5:= r_1+r_2,\
 r_6:= r_2+r_3,\
 r_7:= r_2+r_4,\
 r_8:= r_1+r_2+r_3,\
 r_{9}:= r_1+r_2+r_4,\
 r_{10}:= r_2+r_3+r_4,\
 r_{11}:= r_1+r_2+r_3+r_4,\
 r_{12}:= r_1+2r_2+r_3+r_4$,
then
$\Phi_{D_4}^+=\{r_i \mid  i=1,2,\dots, 12\}$.
We get the total order of $\Phi_{D_4}^+$ as follows:
\begin{align*}
 0  \prec r_1 \prec r_2 \prec r_3 \prec r_4
    \prec r_5 \prec r_6 \prec r_7
    \prec r_8 \prec r_9 \prec r_{10}
    \prec r_{11}
    \prec r_{12}.
\end{align*}
Let $\mathcal{L}$ be a complex simple Lie algebra of type $D_4$
with
a Cartan subalgebra $\mathcal{H}$,
a Cartan decomposition
$\mathcal{L}=\mathcal{H}\oplus \sum_{r\in\Phi_{D_4}}\mathcal{L}_r,$
and a {Chevalley basis}
$\{h_r \mid r\in \Delta_{D_4}\}\cup \{e_r \mid r\in \Phi_{D_4}\}$,
where $e_r\in \mathcal{L}_r$ for all $r\in \Phi_{D_4}$.
Then $[e_r,e_s]=N_{r,s}e_{r+s}$ for all $r, s \in \Phi_{D_4}$,
where $N_{r,s}=\pm(n_{r,s}+1)$ and $n_{r,s}$ is the greatest integer for which $s-n_{r,s}r \in \Phi_{D_4}$.

We choose the signs of the structure constants $N_{r,s}$ for extra special pairs $(r,\ s)$
(see \cite[Page 58]{Carter1}) as follows:
\begin{alignat*}{2}
& -N_{r_1,r_2}=N_{r_2,r_3}=N_{r_2,r_4}=1, & \qquad
& N_{r_1,\ r_2+r_3+r_4}=1,\\
& N_{r_1,r_2+r_3}=N_{r_1,r_2+r_4}=N_{r_2,r_3+r_4}=1, & \qquad
& N_{r_2,\ r_1+r_2+r_3+r_4}=1.
\end{alignat*}
Then the structure constants of the other special pairs
(see \cite[Page 58]{Carter1})
of $r_i\in \Phi_{D_4}^+\backslash \Delta_{D_4}$ are 1.

Let $x\in \mathcal{L}$.
We define a linear map by
$\mathrm{ad}{\,x}:\mathcal{L} \to  \mathcal{L}: y\mapsto [x,y]:=xy-yx$.
Let $x_{r}(t):=\exp ({t{\ }\mathrm{ad}e_r})$ for all $r\in \Phi_{D_4}$,
$t \in \mathbb{C}$.
Let $K$ be a field, then the Chevalley group of $\mathcal{L}$ over $K$ is defined by
$D_4(K):=\langle x_r(t)  \mid  r\in \Phi_{D_4},  t \in K\rangle$.
The Chevalley group of $\mathcal{L}$ over $\mathbb{F}_q$ is also denoted by $D_4(q)$.
Let $r\in \Phi_{D_4}$, then the root subgroup is denoted by
$X_r:=\langle x_r(t)  \mid  t \in K\rangle$.
The subgroup $\langle x_r(t)  \mid  r\in \Phi_{D_4}^+,  t \in \mathbb{F}_q\rangle$
is a Sylow $p$-subgroup of $D_4(q)$.

Let $\sigma$ be a non-trivial symmetry of the Dynkin diagram of $\mathcal{L}_{D_4}$
sending $r_1$ to $r_3$, $r_3$ to $r_4$, $r_4$ to $r_1$, and fixing $r_2$,
$\rho$ be a linear transformation of $\mathcal{V}_4$ into itself
arising from $\sigma$,
then $\rho(\Phi_{D_4})=\Phi_{D_4}$.
The Chevalley group $D_4(q^3)$ has a field automorphism
$
F_q \colon \mathbb{F}_{q^3}  \to  \mathbb{F}_{q^3}:t\mapsto t^q
$
sending $x_r(t)$ to $x_r(t^q)$,
and a graph automorphism $\rho$ sending $x_r(t)$ to $x_{\rho(r)}(t)$ $(r\in \Phi_{D_4})$.
Let $F:=\rho {F_q}={F_q} \rho$.
For a subgroup $X$ of $D_4(q^3)$,
set $X^F:=\{x\in X| F(x)=x\}$.
Then we get $D_4(q^3)^F={^3}D_4(q^3)$.
Set
$r^1:=\frac{1}{3}(r+\rho(r)+\rho^2(r))$
{ for all }$r\in \Phi_{D_4}$.
Let $\alpha:= \frac{1}{3}(r_1+r_3+r_4)$ and $\beta:= r_2$,
then $\Delta_{G_2}:=\{ \alpha, \beta \}$ ($\alpha$ is short)
is a fundamental system of type $G_2$,
and the root system of type $G_2$ is
\begin{align*}
\Phi_{G_2}=& \{r^1 \mid  r\in \Phi_{D_4} \}
 =\{\pm \alpha, \pm\beta, \pm(\alpha+\beta),\pm(2\alpha+\beta),\pm(3\alpha+\beta),\pm(3\alpha+2\beta)\}.
\end{align*}

We obtain the following elements
of $^3D_4(q^3)$.
Let $r\in \Phi_{D_4}$ and $t\in \mathbb{F}_{q^3}$, we set
\begin{align*}
x_{r^1}(t):=
 {\left\{
 \begin{array}{ll}
 x_r(t)& \text{if } \rho(r)=r,{\ } t^q=t\\
 x_r(t)x_{\rho(r)}(t^q)x_{\rho^2(r)}(t^{q^2})
 & \text{if } \rho(r)\neq r,{\ } t^{q^3}=t\\
 \end{array}
 \right.}.
\end{align*}
The root subgroups of $^3D_4(q^3)$ are
$ X_{r^1}:=\{x_{r^1}(t) \mid  t\in \mathbb{F}_{q^3}\}$
for all $r\in \Phi_{D_4}$.

We write
\begin{alignat*}{2}
x_1(t):=& x_{r_1^1}(t)
=x_{r_1}(t)x_{r_3}(t^q)x_{r_4}(t^{q^2})
                & &\quad \text{for all } t\in \mathbb{F}_{q^3},\\
x_2(t):=& x_{r_2^1}(t)
=x_{r_2}(t)
        & & \quad \text{for all } t\in \mathbb{F}_{q},\\
x_3(t):=& x_{r_5^1}(t)
=x_{r_5}(t)x_{r_6}(t^q)x_{r_7}(t^{q^2})
        & &\quad \text{for all } t\in \mathbb{F}_{q^3},\\
x_4(t):=& x_{r_8^1}(t)
=x_{r_8}(t)x_{r_{10}}(t^q)x_{r_{9}}(t^{q^2})
        & & \quad \text{for all } t\in \mathbb{F}_{q^3},\\
x_5(t):=& x_{r_{11}^1}(t)
=x_{r_{11}}(t)
        & & \quad \text{for all } t\in \mathbb{F}_{q},\\
x_6(t):=& x_{r_{12}^1}(t)
=x_{r_{12}}(t)
        & &  \quad \text{for all } t\in \mathbb{F}_{q}.
\end{alignat*}
Set
$ X_i:= \{x_i(t) \mid  t \in {\mathbb{F}_{q^3}}\}$ ($i=1,3,4$) and
$ X_i:= \{x_i(t) \mid  t \in {\mathbb{F}_{q}}\}$  ($i=2,5,6$).
Note that the root subgroups are denoted by $Y_i$ in \cite{Le}, and
$Y_1:=X_2$, $Y_2:=X_1$ and $Y_i:=X_i$ for $i=3,4,5,6$.
We obtain a Sylow $p$-subgroup $U$ of $^3D_4(q^3)$:
\begin{align*}
 U
:=&\left\{x_2(t_2)x_1(t_1)x_3(t_3)x_4(t_4)x_5(t_5)x_6(t_6)
{\,}\middle |{\,}
t_1, t_3, t_4 \in {\mathbb{F}_{q^3}},{\ } t_2, t_5, t_6 \in {\mathbb{F}_{q}}\right\},
\end{align*}
where the product can be taken in an arbitrary, but fixed, order.

Let $t_1,t_3,t_4 \in \mathbb{F}_{q^3}$, $t_2,t_5,t_6 \in \mathbb{F}_{q}$
and define the commutator
\begin{align*}
[x_i(t_i), x_j(t_j)]:=x_i(t_i)^{-1}x_j(t_j)^{-1}x_i(t_i)x_j(t_j).
\end{align*}
Then the non-trivial commutators of $U$ are determined as follows:
\begin{align*}
[x_1(t_1), x_2(t_2)]=& x_3(-t_2t_1)x_4(t_2t_1^{q+1})x_5(-t_2t_1^{q^2+q+1})x_6(2t_2^2t_1^{q^2+q+1}),\\
[x_1(t_1), x_3(t_3)]=& x_4(t_1t_3^q+t_1^qt_3)
                       x_5(-t_1^{q+1}t_3^{q^2}-t_1^{q^2+q}t_3-t_1^{q^2+1}t_3^{q})
                      x_6(-t_1t_3^{q^2+q}-t_1^qt_3^{q^2+1}-t_1^{q^2}t_3^{q+1}),\\
[x_1(t_1), x_4(t_4)]=& x_5(t_1t_4^q+t_1^qt_4^{q^2}+t_1^{q^2}t_4),\\
[x_3(t_3), x_4(t_4)]=& x_6(t_3t_4^q+t_3^qt_4^{q^2}+t_3^{q^2}t_4),\\
[x_2(t_2), x_5(t_5)]=& x_6(t_2t_5).
\end{align*}
From now on, we set
 \begin{align*}
    x(t_1, t_2, t_3, t_4, t_5, t_6)
 := x_2(t_2)x_1(t_1)x_3(t_3)x_4(t_4)x_5(t_5)x_6(t_6)
 \in U.
 \end{align*}


\section{Conjugacy classes}
\label{conjugacy classes}
In this section, let $p$ be a fixed odd prime, we determine the conjugacy classes of
the Sylow $p$-subgroup $U$ of ${^3}D_4(q^3)$.

\begin{Notation}
 Let $x,u\in U$, then the conjugate of $x$ by $u$ is defined by
 ${^u}x:=uxu^{-1}$,
and the conjugacy class of $x$ is the set
${^U}x:=\{vxv^{-1}  \mid v\in U\}$.
\end{Notation}

\begin{Lemma}\label{phi_0,3D4}
 Let
$
  \phi_0 \colon  \mathbb{F}_{q^3} \to  \mathbb{F}_q: t \mapsto t+t^q+t^{q^2},
$
then $\phi_0$ is an $\mathbb{F}_q$-epimorphism and $|\ker\phi_0|=q^2$.
\end{Lemma}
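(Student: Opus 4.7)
The plan is to recognize $\phi_0$ as the field trace $\mathrm{Tr}_{\mathbb{F}_{q^3}/\mathbb{F}_q}$ and prove the three standard properties (well-defined into $\mathbb{F}_q$, $\mathbb{F}_q$-linear, surjective with kernel of the expected size) by direct manipulation, so that readers unfamiliar with Galois theory can follow.

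First I would check that $\phi_0$ actually lands in $\mathbb{F}_q$. For $t \in \mathbb{F}_{q^3}$ we have $t^{q^3} = t$, so applying the Frobenius $x \mapsto x^q$ to $\phi_0(t) = t + t^q + t^{q^2}$ gives $t^q + t^{q^2} + t^{q^3} = t^q + t^{q^2} + t = \phi_0(t)$. Hence $\phi_0(t)$ is fixed by Frobenius and therefore lies in $\mathbb{F}_q$.

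Next I would verify $\mathbb{F}_q$-linearity. Additivity is immediate from the Frobenius identities $(s+t)^{q^i} = s^{q^i} + t^{q^i}$. For $a \in \mathbb{F}_q$ we have $a^q = a = a^{q^2}$, so $\phi_0(at) = at + a^q t^q + a^{q^2} t^{q^2} = a\phi_0(t)$. Thus $\phi_0$ is an $\mathbb{F}_q$-linear map between the $\mathbb{F}_q$-vector spaces $\mathbb{F}_{q^3}$ (dimension $3$) and $\mathbb{F}_q$ (dimension $1$).

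For the kernel, the elements of $\ker \phi_0$ are precisely the roots in $\mathbb{F}_{q^3}$ of the polynomial $f(X) := X^{q^2} + X^q + X \in \mathbb{F}_q[X]$, which has degree $q^2$. Therefore $|\ker \phi_0| \le q^2$. Combined with the rank-nullity identity $|\ker \phi_0| \cdot |\mathrm{im}\,\phi_0| = q^3$ and the trivial bound $|\mathrm{im}\,\phi_0| \le |\mathbb{F}_q| = q$, we force $|\mathrm{im}\,\phi_0| = q$ and $|\ker \phi_0| = q^2$. In particular $\phi_0$ is surjective, i.e.\ an $\mathbb{F}_q$-epimorphism, as claimed.

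No step is really an obstacle here; the only mild subtlety is the kernel bound, and even that reduces to the elementary observation that a polynomial of degree $q^2$ has at most $q^2$ roots in any field. The remaining steps are purely formal consequences of $t^{q^3} = t$ and $a^q = a$ for $a \in \mathbb{F}_q$.
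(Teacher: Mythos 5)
Your proposal is correct and follows essentially the same route as the paper: verify $\mathbb{F}_q$-linearity directly, bound $|\ker\phi_0|\le q^2$ by the degree of the polynomial $X^{q^2}+X^q+X$, and combine with $|\mathrm{im}\,\phi_0|\le q$ and rank--nullity to force equality. The only addition is your explicit check via Frobenius-invariance that $\phi_0$ actually lands in $\mathbb{F}_q$, which the paper takes for granted; this is a small but welcome tightening, not a different argument.
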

\begin{proof}
Let $t,s\in \mathbb{F}_{q^3}$ and $k\in \mathbb{F}_{q}$, then
$
 \phi_0(t+s)= (t+s)+(t+s)^q+(t+s)^{q^2}=(t+t^q+t^{q^2})+(s+s^q+s^{q^2})
            = \phi_0(t)+\phi_0(s)$,
 and $\phi_0(kt)= (kt)+(kt)^q+(kt)^{q^2}=k(t+t^q+t^{q^2})
           = k\phi_0(t)$.
Thus $\phi_0$ is an $\mathbb{F}_q$-homomorphism.
Observe that
$ \ker \phi_0=\{t\in\mathbb{F}_{q^3}  \mid  t+t^q+t^{q^2}=0\}$.
The degree of $t+t^q+t^{q^2}$ is $q^2$, so $|\ker \phi_0|\leq q^2$.
On the other hand,
$ |\mathrm{im}{\,\phi_0}|\leq q$ since $\mathrm{im}{\,\phi_0}\subseteq \mathbb{F}_{q}$.
Then
$|\ker \phi_0|=\frac{|\mathbb{F}_{q^3}|}{|\mathrm{im}{\,\phi_0}|}\geq q^2$
since
$ \mathbb{F}_{q^3}/{\ker\phi_0}\cong \mathrm{im}{\,\phi_0}$.
Thus $|\ker \phi_0|=q^2$ and $\mathrm{im}{\,\phi_0}=\mathbb{F}_{q}$.
Therefore, $\phi_0$ is an $\mathbb{F}_q$-epimorphism.
\end{proof}

\begin{Proposition}\label{eta,3D4}
There exists an element $\eta\in {\mathbb{F}_{q^3}}\backslash {\mathbb{F}_{q}}$
(i.e. $\eta \in {\mathbb{F}_{q^3}}$ but $\eta \notin {\mathbb{F}_{q}}$)
such that
$\eta^{q^2}+\eta^{q}+\eta=1$.
\end{Proposition}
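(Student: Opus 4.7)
The plan is to apply Lemma \ref{phi_0,3D4} directly: since $\phi_0$ is surjective, the fibre $\phi_0^{-1}(1)$ is non-empty, indeed a coset of $\ker\phi_0$ of size $q^2$. The only issue is to ensure that we can choose a representative outside $\mathbb{F}_q$.

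The key observation is to restrict $\phi_0$ to $\mathbb{F}_q$. For $t \in \mathbb{F}_q$ we have $t^q = t$, hence $\phi_0(t) = 3t$. I would split into two cases according to whether $p = 3$ or $p \neq 3$. If $p \neq 3$, then $3$ is invertible in $\mathbb{F}_q$, so $\phi_0^{-1}(1) \cap \mathbb{F}_q = \{1/3\}$ has exactly one element. If $p = 3$, then $\phi_0(t) = 0$ for every $t \in \mathbb{F}_q$, so $\phi_0^{-1}(1) \cap \mathbb{F}_q = \varnothing$. In either case, the intersection has at most one element.

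Since $|\phi_0^{-1}(1)| = q^2 \geq 9 > 1$, the fibre is strictly larger than its intersection with $\mathbb{F}_q$, so there exists an element $\eta \in \phi_0^{-1}(1) \setminus \mathbb{F}_q$. By construction $\eta \in \mathbb{F}_{q^3}\setminus \mathbb{F}_q$ and $\eta + \eta^q + \eta^{q^2} = 1$, which is exactly the claim.

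There is no real obstacle here; the only point that needs care is the case $p = 3$, where the naive guess $\eta = 1/3$ no longer makes sense, and one has to rely purely on the cardinality argument via the surjectivity of $\phi_0$ established in Lemma \ref{phi_0,3D4}.
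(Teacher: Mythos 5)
Your proposal is correct and follows essentially the same route as the paper: both invoke Lemma \ref{phi_0,3D4} to see that the fibre $\phi_0^{-1}(1)$ is a coset of $\ker\phi_0$ of size $q^2$ and then observe that it cannot be contained in $\mathbb{F}_q$. The only (harmless) difference is that you sharpen the bound on $\phi_0^{-1}(1)\cap\mathbb{F}_q$ to at most one element via $\phi_0(t)=3t$ for $t\in\mathbb{F}_q$, whereas the paper simply bounds it by $|\mathbb{F}_q|=q$ and uses $q^2-q>1$.
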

\begin{proof}
By \ref{phi_0,3D4}, we obtain
$\# \{t\in \mathbb{F}_{q^3} \mid t^{q^2}+t^{q}+t=1\}=q^2$.
But $|\mathbb{F}_{q}|=q$,
so there are at least $q^2-q$ $(>1)$ elements of $\mathbb{F}_{q^3}$
which satisfy the equation $t^{q^2}+t^{q}+t=1$.
Then the claim is proved.
\end{proof}

From now on, we fix an element $\eta\in {\mathbb{F}_{q^3}}\backslash {\mathbb{F}_{q}}$
such that $\eta^{q^2}+\eta^{q}+\eta=1$.
\begin{Corollary}\label{eta neq 0, 3D4}
 $1+\eta^{1-q^2}\neq 0$.
\end{Corollary}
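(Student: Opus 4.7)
The plan is to proceed by contradiction. Suppose $1+\eta^{1-q^2}=0$; since $\eta\notin\mathbb{F}_q$ forces $\eta\neq 0$, this rearranges to
\[
\eta^{q^2}=-\eta.
\]
This single identity is what I will push on.

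Next I would apply the Frobenius $t\mapsto t^q$ to both sides (legitimate since $p$ is odd, but in fact the argument works in any characteristic). This yields $\eta^{q^3}=-\eta^q$. Because $\eta\in\mathbb{F}_{q^3}$ we have $\eta^{q^3}=\eta$, so
\[
\eta^q=-\eta.
\]
At this point there are two equally short ways to finish; I would pick whichever the author prefers.

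Route one (field-theoretic): iterating, $\eta^{q^2}=(\eta^q)^q=(-\eta)^q=-\eta^q=\eta$, so $\eta\in\mathbb{F}_{q^2}$. Since $\gcd(2,3)=1$ we have $\mathbb{F}_{q^2}\cap\mathbb{F}_{q^3}=\mathbb{F}_q$, and hence $\eta\in\mathbb{F}_q$, contradicting the choice of $\eta$. Route two (direct substitution): feed $\eta^{q^2}=-\eta$ and $\eta^q=-\eta$ into the defining relation $\eta^{q^2}+\eta^q+\eta=1$ of Proposition \ref{eta,3D4} to obtain $-\eta-\eta+\eta=-\eta=1$, so $\eta=-1\in\mathbb{F}_q$, again a contradiction.

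There is essentially no obstacle here: the only thing that could go wrong is characteristic $2$, where $-1=1$ might collapse the argument, but the Frobenius step still forces $\eta^{q^2}=\eta$, and the hypothesis $p$ odd of the paper makes both routes clean in any case.
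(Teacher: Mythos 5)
Your proposal is correct, and both of your routes close the argument without gaps. Route two is essentially the paper's own proof: the paper multiplies $1+\eta^{1-q^2}=0$ by $\eta^{q^2}$ to get $\eta^{q^2}+\eta=0$ and substitutes directly into the defining relation $\eta^{q^2}+\eta^q+\eta=1$ of \ref{eta,3D4} to conclude $\eta^q=1$, hence $\eta=1\in\mathbb{F}_q$ — a contradiction; so the paper reaches ``$\eta$ is a specific element of $\mathbb{F}_q$'' in one substitution, without your preliminary Frobenius step, whereas you first derive $\eta^q=-\eta$ and then land on $\eta=-1\in\mathbb{F}_q$. Route one is genuinely different and slightly more general: it never touches the relation $\eta^{q^2}+\eta^q+\eta=1$, using only $\eta\in\mathbb{F}_{q^3}\setminus\mathbb{F}_q$ together with $\eta^{q^2}=-\eta$, Frobenius, and $\mathbb{F}_{q^2}\cap\mathbb{F}_{q^3}=\mathbb{F}_q$; this shows the statement holds for \emph{every} element of $\mathbb{F}_{q^3}\setminus\mathbb{F}_q$, not just the fixed $\eta$ satisfying the trace-type condition. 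The paper's version buys brevity and stays entirely inside the algebraic identities already set up in \ref{phi_0,3D4} and \ref{eta,3D4}; your route one buys independence from the choice of $\eta$. Your remark about characteristic $2$ is moot here since the paper fixes $p$ odd throughout, but it is reassuring that $(-\eta)^q=-\eta^q$ holds in any characteristic.
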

\begin{proof}
 Suppose that $1+\eta^{1-q^2}= 0$.
 Multiply $\eta^{q^2}$ $(\neq 0)$ to both sides of the formula, then
$\eta^{q^2}+\eta= 0$, by
\ref{eta,3D4}
$\eta^{q}=1$,
so $\eta=1 \in \mathbb{F}_q$.
This is a contradiction to that  $\eta \in {\mathbb{F}_{q^3}}\backslash {\mathbb{F}_{q}}$.
\end{proof}

\begin{Notation/Lemma} \label{pi_q, 3D4}
The map
$\pi_q \colon  {\mathbb{F}_{q^3}}  \to  {\mathbb{F}_{q}}:
x\mapsto \phi_0(\eta x)=(\eta x)^{q^2}+(\eta x)^q+{\eta x}
$
is an $\mathbb{F}_q$-epimorphism and
$\mathbb{F}_{q^3}=\ker{\pi_q}\oplus \mathbb{F}_q$.
In particular, $\pi_q|_{\mathbb{F}_q}=\mathrm{id}_{\mathbb{F}_q}$ and $\pi_q^2=\pi_q$.
\end{Notation/Lemma}

\begin{Lemma}\label{zeta_u}
 Let $p>2$ and $u\in \mathbb{F}_{q^3}^*$, then the map
$
 \zeta_u \colon \mathbb{F}_{q^3} \to  \mathbb{F}_{q^3}: t\mapsto ut^{q^2}+u^qt^q
$
\nomenclature{$\zeta_u$}{an $\mathbb{F}_{q}$-automorphism for an fixed $u\in \mathbb{F}_{q^3}^*$:
              $\mathbb{F}_{q^3} \to  \mathbb{F}_{q^3}: t\mapsto ut^{q^2}+u^qt^q$
              \nomrefpage}%
is an $\mathbb{F}_{q}$-automorphism.
\end{Lemma}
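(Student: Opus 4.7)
The plan is to first observe that $\zeta_u$ is manifestly $\mathbb{F}_q$-linear, and then to reduce the automorphism statement to injectivity, which I will prove by showing that the kernel of $\zeta_u$ is trivial.

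For linearity, both Frobenius maps $t\mapsto t^q$ and $t\mapsto t^{q^2}$ are $\mathbb{F}_q$-linear endomorphisms of $\mathbb{F}_{q^3}$, and multiplication by the scalars $u$ and $u^q$ is $\mathbb{F}_q$-linear; hence $\zeta_u$ is an $\mathbb{F}_q$-linear endomorphism of the $3$-dimensional $\mathbb{F}_q$-space $\mathbb{F}_{q^3}$. In particular, to prove it is an automorphism it suffices to verify that $\ker\zeta_u=\{0\}$.

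Assume, for contradiction, that there exists $t\in\mathbb{F}_{q^3}^*$ with $ut^{q^2}+u^qt^q=0$. Since $u\neq 0$ and $t\neq 0$, I can divide by $u^q t^q$ to obtain
\[
u^{1-q}\,t^{q(q-1)}=-1,\qquad\text{equivalently}\qquad \bigl(t^q/u\bigr)^{q-1}=-1.
\]
Set $w:=t^q/u\in\mathbb{F}_{q^3}^*$. Then $w^{q-1}=-1$, so the multiplicative order of $w$ divides $2(q-1)$ but not $q-1$. On the other hand, the order of $w$ divides $|\mathbb{F}_{q^3}^*|=q^3-1=(q-1)(q^2+q+1)$. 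The key number-theoretic point is that, because $p$ (and hence $q$) is odd, $q^2+q+1$ is odd, so
\[
\gcd\bigl(2(q-1),\,q^3-1\bigr)=(q-1)\cdot\gcd(2,\,q^2+q+1)=q-1.
\]
Therefore the order of $w$ must in fact divide $q-1$, giving $w^{q-1}=1$. Combined with $w^{q-1}=-1$, this forces $2=0$ in $\mathbb{F}_{q^3}$, contradicting $p$ odd.

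Hence no such nonzero $t$ exists, $\zeta_u$ has trivial kernel, and is therefore an $\mathbb{F}_q$-automorphism of $\mathbb{F}_{q^3}$. The only nontrivial step is the kernel analysis, where the odd-characteristic hypothesis is crucial; everything else is a routine linearity check and a dimension count.
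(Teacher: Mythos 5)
Your proof is correct and follows essentially the same route as the paper: both establish $\mathbb{F}_q$-linearity via the Frobenius, reduce to triviality of the kernel, and derive the contradiction from the equation $(t^q/u)^{q-1}=-1$ together with the fact that $q^2+q+1$ is odd (you package this as $\gcd(2(q-1),q^3-1)=q-1$, while the paper says the order of $(u^{-1}t^q)^{q-1}$ divides $q^2+q+1$ and so cannot be $2$, which is the same computation). No gaps.
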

\begin{proof}
Since taking the $q$-th power is an $\mathbb{F}_{q}$-homomorphism,
$\zeta_u$ is an $\mathbb{F}_{q}$-homomorphism.
Let $t\in \ker \zeta_u$, then $0=\zeta_u(t)=
ut^{q^2}+u^qt^q =(u^{1-q}t^{q^2-q}+1)u^qt^q=((u^{-1}t^q)^{q-1}+1)u^qt^q$.
Suppose that $t\neq 0$, then $(u^{-1}t^q)^{q-1}=-1$ and $(u^{-1}t^q)^{2(q-1)}=1$.
Thus the order of $(u^{-1}t^q)^{q-1}$ is $2$, i.e. $|(u^{-1}t^q)^{q-1}|=2$.
On the other hand, since $u^{-1}t^q \in \mathbb{F}_{q^3}^*$,
the order $|u^{-1}t^q|$ divides $(q^3-1)=(q-1)(q^2+q+1)$,
so $|(u^{-1}t^q)^{q-1}|$ divides $(q^2+q+1)$.
Since $q^2+q+1$ is odd, we get $|(u^{-1}t^q)^{q-1}|\neq 2$.
This is a contradiction, so $\ker \zeta_u=\{0\}$.
Thus
$\mathbb{F}_{q^3} \cong \mathrm{im}{\,\zeta_u}
  \subseteq \mathbb{F}_{q^3}$,
then $\mathrm{im}{\,\zeta_u} =\mathbb{F}_{q^3}$.
Therefore,
$\zeta_u$ is an $\mathbb{F}_{q}$-automorphism.
\end{proof}

\begin{Corollary}\label{zeta_u-b}
Let $p>2$,
the map
$
\mathbb{F}_{q^3} \to  \mathbb{F}_{q^3}: t\mapsto t+t^{q}
$
is an $\mathbb{F}_{q}$-automorphism.
\end{Corollary}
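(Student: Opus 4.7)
\medskip

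\textbf{Plan.} The map in question is visibly $\mathbb{F}_q$-linear (addition is linear and $t \mapsto t^q$ is an $\mathbb{F}_q$-homomorphism on $\mathbb{F}_{q^3}$, since elements of $\mathbb{F}_q$ are fixed by the $q$-th power), so the content is bijectivity. My plan is to obtain this map as a composition of $\mathbb{F}_q$-automorphisms already available from the preceding lemma. Concretely, I would specialize Lemma \ref{zeta_u} to $u = 1 \in \mathbb{F}_{q^3}^*$, which yields that
\[
\zeta_1 \colon \mathbb{F}_{q^3} \to \mathbb{F}_{q^3}, \quad t \mapsto t^{q^2} + t^q,
\]
is an $\mathbb{F}_q$-automorphism. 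The Frobenius $\phi \colon t \mapsto t^q$ is likewise an $\mathbb{F}_q$-automorphism of $\mathbb{F}_{q^3}$, with $\phi^3 = \mathrm{id}$, so $\phi^2$ is an $\mathbb{F}_q$-automorphism as well.

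Next I would compute the composition: applying $\phi^2$ to the output of $\zeta_1$, and using $t^{q^3} = t$ for all $t \in \mathbb{F}_{q^3}$, gives
\[
(\phi^2 \circ \zeta_1)(t) = (t^{q^2} + t^q)^{q^2} = t^{q^4} + t^{q^3} = t^q + t.
\]
Thus the map $t \mapsto t + t^q$ is the composition $\phi^2 \circ \zeta_1$ of two $\mathbb{F}_q$-automorphisms of $\mathbb{F}_{q^3}$, and is therefore itself an $\mathbb{F}_q$-automorphism. This is the whole proof.

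There is essentially no obstacle: the only thing to notice is the right specialization of $u$ and that conjugating/composing with Frobenius converts $\zeta_1$ into the desired form. As a sanity check (or an alternative proof, if one preferred not to invoke $\phi$), one could repeat the kernel argument from the proof of \ref{zeta_u} directly: if $t + t^q = 0$ with $t \neq 0$ then $t^{q-1} = -1$, so $t^{q-1}$ has order $2$; but $|t^{q-1}|$ divides $q^2+q+1$ (the quotient $(q^3-1)/(q-1)$), and $q^2+q+1 = q(q+1)+1$ is odd, contradiction. Either route gives the corollary in a few lines.
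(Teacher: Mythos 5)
Your proof is correct and matches the paper's intent: the paper states this as an unproved corollary of Lemma \ref{zeta_u}, and your specialization $u=1$ followed by composition with the square of the Frobenius (using $t^{q^3}=t$) is exactly the missing one-line derivation. The alternative direct kernel argument you sketch is also sound (it implicitly uses that $p$ is odd, via $-1\neq 1$ and $q^2+q+1$ odd, which is the paper's standing assumption and the same device used in the proof of Lemma \ref{zeta_u}).
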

We obtain the following conjugate elements
from the commutator relations of $U$.
\begin{Lemma} \label{prop:conjugacy classes of x_i-3D4}
Let $u:=x(r_1,r_2,r_3,r_4,r_5,r_6)\in U$, $x_i(t_i)\in U$ and $\bar{t}_3\in \mathbb{F}_{q^3}$, then
\begin{align*}
{^u}x_6(t_6)=&x_6(t_6),\\
{^u}x_5(t_5)=&x_5(t_5)\cdot x_6(r_2t_5),\\
{^u}x_4(t_4)=&x_4(t_4)\cdot x_5(\phi_0\big(r_1t_4^q\big))
                      \cdot x_6(\phi_0\big(r_1r_2t_4^q\big)+\phi_0\big(r_3t_4^q\big)),\\
{^u}x_3(t_3)=&x_3(t_3)\cdot x_4(r_1t_3^q+r_1^qt_3)
                      \cdot x_5(\phi_0\big(r_1^{q^2+q}t_3\big))\\
                      & \cdot x_6(\phi_0\big(r_1^{q^2+q}r_2t_3\big)
                               +\phi_0\big(-r_1t_3^{q^2+q}\big)
                               +\phi_0\big(-t_3r_4^q\big)),\\
{^u}x_2(t_2)=&x_2(t_2)\cdot x_3(-r_1t_2)
                      \cdot x_4(-t_2r_1^{q+1})
                      \cdot x_5(-t_2r_1^{q^2+q+1})\\
                      & \cdot x_6(-t_2r_5-t_2^2r_1^{q^2+q+1}-t_2r_1^{q^2+q+1}r_2),\\
{^u}x_1(t_1)=&x_1(t_1)\cdot x_3(r_2t_1)
                      \cdot x_4(-r_2t_1^{q+1}-t_1r_3^q-t_1^qr_3)\\
                      & \cdot x_5(r_2t_1^{q^2+q+1}
                                 +\phi_0\big(r_1^{q^2}(-r_3t_1^q-r_3^qt_1) \big)
                                 +\phi_0\big(r_3^{q^2}t_1^{q+1} \big)
                                 +\phi_0\big(-t_1r_4^{q} \big)
                                 )\\
                      & \cdot x_6(2r_2^2t_1^{q^2+q+1}
                                  +\phi_0\big(r_1^{q^2}r_2(-r_3t_1^q-r_3^qt_1) \big)
                                  +\phi_0\big(r_2r_3^{q^2}t_1^{q+1} \big)\\
                      &\phantom{\cdot x_6(}
                                  +\phi_0\big(-r_2r_4^qt_1 \big)
                                  +\phi_0\big(-t_1r_3^{q^2+q} \big)),
\end{align*}
and
\begin{align*}
{^u}\big(x_3(t_3)x_5(t_5)\big)
                     =&x_3(t_3)\cdot x_4(r_1t_3^q+r_1^qt_3)
                      \cdot x_5(t_5+\phi_0\big(r_1^{q^2+q}t_3\big))\\
                      & \cdot x_6(r_2t_5+\phi_0\big(r_1^{q^2+q}r_2t_3\big)
                               +\phi_0\big(-r_1t_3^{q^2+q}\big)
                               +\phi_0\big(-t_3r_4^q\big)),
\end{align*}
\begin{align*}
{^u}\big(x_2(t_2)x_4(t_4)x_5(t_5)\big)
                      = &x_2(t_2)\cdot x_3(-r_1t_2)
                       \cdot x_4(t_4-t_2r_1^{q+1})
                       \cdot x_5(t_5-t_2r_1^{q^2+q+1}+\phi_0\big(r_1t_4^q\big))\\
                      & \cdot x_6(-t_2r_5-t_2^2r_1^{q^2+q+1}-t_2r_1^{q^2+q+1}r_2
                          +\phi_0\big(r_1r_2t_4^q\big)
                          +\phi_0\big(r_3t_4^q\big)
                          +r_2t_5),
\end{align*}
\begin{align*}
& {^u}\big(x_1(t_1)x_3(\bar{t}_3)\big)\\
       =&x_1(t_1)\cdot x_3(r_2t_1+\bar{t}_3)
                       \cdot x_4(-r_2t_1^{q+1}-t_1r_3^q-t_1^qr_3+r_1\bar{t}_3^q+r_1^q\bar{t}_3)\\
                      & \cdot x_5(r_2t_1^{q^2+q+1}
                                 +\phi_0\big(r_1^{q^2}(-r_3t_1^q-r_3^qt_1) \big)
                                 +\phi_0\big(r_3^{q^2}t_1^{q+1} \big)
                                 +\phi_0\big(-t_1r_4^{q} \big)
                                 +\phi_0\big(r_1^{q^2+q}\bar{t}_3\big)
                                 )\\
                      & \cdot x_6(2r_2^2t_1^{q^2+q+1}
                                  +\phi_0\big(r_1^{q^2}r_2(-r_3t_1^q-r_3^qt_1) \big)
                                  +\phi_0\big(r_2r_3^{q^2}t_1^{q+1} \big)
                                  +\phi_0\big(-r_2r_4^qt_1 \big)
                                  +\phi_0\big(-t_1r_3^{q^2+q} \big)
                               \\
                      &\phantom{\cdot x_6(}
                               +\phi_0\big(r_1^{q^2+q}r_2\bar{t}_3\big)
                               +\phi_0\big(-r_1\bar{t}_3^{q^2+q}\big)
                               +\phi_0\big(-\bar{t}_3r_4^q\big)
                               +\phi_0\big(\bar{t}_3^{q^2}(r_2t_1^{q+1}+t_1r_3^q+t_1^qr_3)\big)),
\end{align*}
\begin{align*}
  & {^u}\big(x_2(t_2)x_1(t_1)\big)\\
 =&x_2(t_2)x_1(t_1)\cdot x_3(r_2t_1-r_1t_2)
                      \cdot x_4(-r_2t_1^{q+1}-t_1r_3^q-t_1^qr_3
                                 -t_2r_1^{q+1}+t_1t_2r_1^q+t_1^qt_2r_1)\\
                      & \cdot x_5(r_2t_1^{q^2+q+1}
                                 +\phi_0\big(r_1^{q^2}(-r_3t_1^q-r_3^qt_1) \big)
                                 +\phi_0\big(r_3^{q^2}t_1^{q+1} \big)
                                 +\phi_0\big(-t_1r_4^{q} \big)
                                 \\
                      &\phantom{\cdot x_5(}
                                -t_2r_1^{q^2+q+1}
                                +\phi_0\big(-r_1t_1^{q^2+q}t_2\big)
                                +\phi_0\big(t_1^{q^2}t_2r_1^{q+1}\big) )\\
                      & \cdot x_6(2r_2^2t_1^{q^2+q+1}
                                  +\phi_0\big(r_1^{q^2}r_2(-r_3t_1^q-r_3^qt_1) \big)
                                  +\phi_0\big(r_2r_3^{q^2}t_1^{q+1}\big)
                                  +\phi_0\big(-r_2r_4^qt_1 \big)
                                  +\phi_0\big(-t_1r_3^{q^2+q} \big)
                                  \\
                      &\phantom{\cdot x_6(}
                        -t_2r_5-t_2^2r_1^{q^2+q+1}
                              -t_2r_1^{q^2+q+1}r_2
                              +\phi_0\big(-2r_1r_2t_1^{q^2+q}t_2\big)
                              +\phi_0\big(t_1^{q^2}t_2r_1^{q+1}r_2\big)\\
                      &\phantom{\cdot x_6(}
                              +\phi_0\big(r_1^{q^2+q}t_1t_2^2\big) ).
\end{align*}
\end{Lemma}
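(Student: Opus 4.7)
The plan is to establish all the formulas by direct computation from the commutator relations of Section 2, working outward from the deepest term of the descending central series. I would begin by noting that $r_{12}$ is the highest root, so $X_6 \subseteq Z(U)$; this gives ${}^u x_6(t_6) = x_6(t_6)$ immediately. Moreover, the only sum $r_j + r_{11}$ lying in $\Phi_{D_4}^+$ is $r_2 + r_{11} = r_{12}$, so $X_5$ commutes with every $X_j$ for $j\neq 2$, and consequently ${}^u x_5(t_5) = {}^{x_2(r_2)}x_5(t_5) = x_5(t_5)\,x_6(r_2 t_5)$ follows from the single relation $[x_2(r_2), x_5(t_5)] = x_6(r_2 t_5)$ together with the centrality of $x_6$.

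For ${}^u x_i(t_i)$ with $i\in\{1,2,3,4\}$ I would fix the canonical factorization $u = x_2(r_2)x_1(r_1)x_3(r_3)x_4(r_4)x_5(r_5)x_6(r_6)$ and evaluate ${}^u x_i(t_i)$ as the iterated conjugation by the six factors, using the identity $ab = ba\cdot[a,b]$ (the paper's commutator convention) to collect commutator terms. Each application introduces a new root element belonging to a strictly deeper term of the descending central series, and each such element must in turn be conjugated by the remaining factors of $u$ to its left. Because the commutator formulas involving the short-root subgroups $X_1, X_3, X_4$ take the Frobenius-twisted shape $t_i t_j^q + t_i^q t_j^{q^2} + t_i^{q^2}t_j$, every argument that ends up inside $X_5$ or $X_6$ is naturally a sum of values of $\phi_0$. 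The $\mathbb{F}_q$-linearity of $\phi_0$ from Lemma \ref{phi_0,3D4} is then applied to merge the Frobenius conjugates into the compact form printed in the statement.

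For the compound elements I would first conjugate each factor separately using the preceding formulas, then multiply the results and push the non-canonical factors back into the fixed product order using the commutator relations. The extra cross-terms that appear in the statement (for example, the $\phi_0(\bar t_3^{q^2}(r_2 t_1^{q+1} + t_1 r_3^q + t_1^q r_3))$ summand in the $x_1(t_1)x_3(\bar t_3)$ formula) arise precisely from this reordering: the $X_4$-factor produced in ${}^u x_1(t_1)$ has to be commuted past the $x_3(\bar t_3)$ factor coming from ${}^u x_3(\bar t_3)$, and the relation $[x_3,x_4] = x_6(\phi_0(\cdot))$ supplies the extra $X_6$-contribution.

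The main obstacle is not conceptual but the combinatorial bookkeeping: the formulas for ${}^u x_1(t_1)$ and, more acutely, ${}^u(x_2(t_2)x_1(t_1))$ each involve roughly a dozen monomials in the $r_i$, $t_j$ and their Frobenius twists, every one of which must be correctly assigned to its $\phi_0$-orbit and tracked through the structure-constant signs (notably the flip $-N_{r_1,r_2} = 1$). Carrying out this accounting without error, rather than any subtle algebraic manoeuvre, is where the bulk of the work lies.
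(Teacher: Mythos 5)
The paper gives no written proof of this lemma (it is stated as following directly ``from the commutator relations of $U$''), and your proposal is precisely the computation the paper intends: iterated conjugation through the fixed factorization of $u$, using $ab=ba\,[a,b]$, the centrality of $X_6$, the fact that $X_5$ only fails to commute with $X_2$, and the $\mathbb{F}_q$-linearity of $\phi_0$ to collect the Frobenius-twisted terms. Your plan is correct and essentially the same approach, including the correct identification of where the cross-terms in the compound formulas come from.
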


Before we get the conjugacy classes of $U$, we need the following lemma.
\begin{Lemma}\label{restriction of phi_0,3D4}
Let $u\in \mathbb{F}_{q^3}\backslash \mathbb{F}_q$, $\ker{\phi_0}$ be the kernel of $\phi_0$,
$S:=u\ker{\phi_0}$ and
\begin{align*}
  \phi_0|_{S} \colon u\ker{\phi_0}  \to  \mathbb{F}_{q}:t\mapsto t^{q^2}+t^{q}+t.
\end{align*}
Then $\phi_0|_{S}$ is an $\mathbb{F}_q$-epimorphism.
\end{Lemma}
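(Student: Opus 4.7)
The plan is to reduce the claim to a dimension count: since $\phi_0$ is $\mathbb{F}_q$-linear (by Lemma \ref{phi_0,3D4}), so is its restriction $\phi_0|_{S}\colon S\to\mathbb{F}_q$. The target is one-dimensional over $\mathbb{F}_q$, so $\phi_0|_{S}$ is either the zero map or surjective. It therefore suffices to exhibit a single element of $S$ on which $\phi_0$ does not vanish, or equivalently, to show that $S\not\subseteq\ker\phi_0$.

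First I would note that multiplication by $u\in\mathbb{F}_{q^3}^{\ast}$ is an $\mathbb{F}_q$-linear automorphism of $\mathbb{F}_{q^3}$, so $S=u\ker\phi_0$ is a two-dimensional $\mathbb{F}_q$-subspace of $\mathbb{F}_{q^3}$. Suppose, for a contradiction, that $S\subseteq\ker\phi_0$. Since $|S|=|\ker\phi_0|=q^2$, this forces $u\ker\phi_0=\ker\phi_0$. Iterating this identity and combining it with the additive closure of $\ker\phi_0$, one deduces that $\ker\phi_0$ is stable under multiplication by every element of the $\mathbb{F}_q$-subalgebra $\mathbb{F}_q[u]\subseteq\mathbb{F}_{q^3}$.

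Here the hypothesis $u\in\mathbb{F}_{q^3}\setminus\mathbb{F}_q$ enters: since $[\mathbb{F}_{q^3}:\mathbb{F}_q]=3$ is prime, $\mathbb{F}_q$ is the only proper intermediate field, and hence $\mathbb{F}_q[u]=\mathbb{F}_{q^3}$. Choosing any nonzero $s_0\in\ker\phi_0$ (which exists since $|\ker\phi_0|=q^2>1$) and using that $s_0$ is a unit in $\mathbb{F}_{q^3}$, one obtains $\mathbb{F}_{q^3}=\mathbb{F}_{q^3}\cdot s_0\subseteq\ker\phi_0$, contradicting $|\ker\phi_0|=q^2<q^3$. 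Thus $\phi_0|_{S}$ is nonzero, hence surjective. I do not anticipate a serious obstacle; the only mildly subtle step is the degree-theoretic observation $\mathbb{F}_q[u]=\mathbb{F}_{q^3}$, which crucially uses the primality of $3$.
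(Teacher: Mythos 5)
Your proof is correct, but it takes a genuinely different route from the paper's. The paper argues by bounding the kernel of $\phi_0|_{S}$ directly: for $ut_0\in\ker(\phi_0|_S)$ it writes down the two relations $t_0^{q^2}+t_0^q+t_0=0$ and $u^{q^2}t_0^{q^2}+u^qt_0^q+ut_0=0$, multiplies the first by $u^{q^2}$ and subtracts to eliminate the $t_0^{q^2}$ term, obtaining $(u^q-u^{q^2})t_0^q+(u-u^{q^2})t_0=0$; since $u\notin\mathbb{F}_q$ gives $u^q\neq u^{q^2}$, this forces $t_0\in(u^q-u^{q^2})\mathbb{F}_q$, so $|\ker(\phi_0|_S)|\leq q$ and $|\mathrm{im}(\phi_0|_S)|\geq q^2/q=q$, whence surjectivity. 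You instead exploit that an $\mathbb{F}_q$-linear map into the one-dimensional space $\mathbb{F}_q$ is zero or onto, and rule out the zero case structurally: $S\subseteq\ker\phi_0$ would force $u\ker\phi_0=\ker\phi_0$ by cardinality, making $\ker\phi_0$ a module over $\mathbb{F}_q[u]=\mathbb{F}_{q^3}$ (using that $3$ is prime and $u\notin\mathbb{F}_q$), which contradicts $|\ker\phi_0|=q^2$. Each step of yours checks out, including the iteration argument showing stability under all of $\mathbb{F}_q[u]$. Your approach is cleaner and avoids all Frobenius manipulation, at the cost of being purely existential; the paper's elimination computation additionally identifies the kernel of the restriction explicitly (it has exactly $q$ elements, namely $u\cdot\bigl((u^q-u^{q^2})\mathbb{F}_q\cap\ker\phi_0\bigr)$), information of the kind that is reused in the fibre counts behind Table \ref{table:conjugacy classes-3D4}. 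Both proofs use the hypothesis $u\notin\mathbb{F}_q$ in an essential and essentially equivalent way.
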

\begin{proof}
The kernel $\ker{\phi_0}$ is an $\mathbb{F}_q$-vector subspace of $\mathbb{F}_{q^3}$
and $u\in \mathbb{F}_{q^3}$,
so $u\ker{\phi_0}$ is an $\mathbb{F}_q$-vector subspace of $\mathbb{F}_{q^3}$.
Thus $\phi_0|_{S}$ is an $\mathbb{F}_q$-homomorphism by \ref{phi_0,3D4}.
Let $t_0\in \ker{\phi_0}$ and $ut_0\in \ker{(\phi_0|_{S})}$, then
$t_0^{q^2}+t_0^{q}+t_0=0$ and $u^{q^2}t_0^{q^2}+u^{q}t_0^{q}+ut_0=0$,
so $u^{q^2}t_0^{q^2}+u^{q^2}t_0^{q}+u^{q^2}t_0=0$ since $u^{q^2}\neq 0$.
Then $(u^{q}-u^{q^2})t_0^{q}+(u-u^{q^2})t_0=0$.
Since $u\notin \mathbb{F}_q$, $u^{q}-u^{q^2}\neq 0$.
Then $(u^{q}-u^{q^2})^{-1}t_0=(u^{q}-u^{q^2})^{-q}t_0^q$,
so $(u^{q}-u^{q^2})^{-1}t_0 \in \mathbb{F}_q$ and $t_0 \in (u^{q}-u^{q^2})\mathbb{F}_q$.
Thus $|\ker{(\phi_0|_{S})}|\leq |u(u^{q}-u^{q^2})\mathbb{F}_q| =q$,
then $|\mathrm{im}{(\phi_0|_{S})}|\geq \frac{|S|}{|\ker{(\phi_0|_{S})}|}
              \stackrel{\ref{phi_0,3D4}}{=}\frac{q^2}{q}=q$.
We have $|\mathrm{im}{(\phi_0|_{S})}|=q$ since $|\mathrm{im}{(\phi_0|_{S})}|\leq |\mathbb{F}_q|=q$.
Therefore,  $\phi_0|_{S}$ is an $\mathbb{F}_q$-epimorphism.
\end{proof}

\begin{Notation}\label{notation:Ta}
Let $a^*\in \mathbb{F}_{q^3}^*=\mathbb{F}_{q^3}\backslash \{0\}$,
then denote by
 $T^{a^*}$ a complete set of
 coset representatives (i.e. a transversal)
 of  $(a^*\mathbb{F}_{q}^+)$ in $\mathbb{F}_{q^3}^+$.
\nomenclature{$T^{a^*}$}
{denotes a complete set of
 coset representatives
 of  $(a^*\mathbb{F}_{q}^+)$ in $\mathbb{F}_{q^3}^+$
 for an fixed $a^*\in \mathbb{F}_{q^3}^*$  \nomrefpage}%
Thus $|T^{a^*}|=q^2$.
If $\bar{t}_0\in T^{a^*}$ and $\bar{t}_0\in a^*\mathbb{F}_{q}^+$,
we set $\bar{t}_0=0$.
\end{Notation}

\begin{Proposition}[Conjugacy classes of $U$]\label{prop:conjugacy classes-3D4}
Let $p>2$, the conjugacy classes of the Sylow $p$-subgroup $U$ of ${^3}D_4(q^3)$ are
listed in Table \ref{table:conjugacy classes-3D4}.
\begin{table}[!htb]
\caption{Conjugacy classes of $U$ for $p>2$}
\label{table:conjugacy classes-3D4}
\begin{align*}
\begin{array}{|c|c|c|}\hline
 \rule{0pt}{13pt}
\begin{array}{c}
\text{Representatives } x \in U \\
\end{array}
& \text{Conjugacy Classes } {^Ux}
& |{^Ux}|
\\\hline
\hline
I_8
& x(0,0,0,0,0,0) & 1\\\hline
\begin{array}{c}
x_6(t_6^*), {\ }
t_6^*\in \mathbb{F}_q^*
\end{array}
& x(0,0,0,0,0,t_6^*) & 1\\\hline
\begin{array}{c}
x_5(t_5^*), {\ }
t_5^*\in \mathbb{F}_q^*
\end{array}
& \begin{array}{c}
x(0,0,0,0,t_5^*,s_6), \\
s_6\in \mathbb{F}_q
\end{array}
& q\\
\hline
\begin{array}{c}
x_4(t_4^*), {\ }
t_4^*\in \mathbb{F}_{q^3}^*
\end{array}
& \begin{array}{c}
x(0,0,0,t_4^*,s_5,s_6), \\
s_5,s_6\in \mathbb{F}_q
\end{array}
& q^2\\
\hline
\begin{array}{c}
x(0,0,t_3^*,0,t_5,0), \\
t_3^*\in \mathbb{F}_{q^3}^*,{\ }
t_5\in \mathbb{F}_q
\end{array}
& \begin{array}{c}
x(0,0,t_3^*,s_4,\hat{s}_5,s_6),\\
s_4\in \mathbb{F}_{q^3}, {\ } s_6\in \mathbb{F}_q
\end{array}
& q^4\\
\hline
\begin{array}{c}
x(0, t_2^*, 0, t_4, t_5, 0), \\
t_2^*\in \mathbb{F}_{q}^*,{\ }
t_4,t_5\in \mathbb{F}_q
\end{array}
&
\begin{array}{c}
x(0,t_2^*,s_3,\hat{s}_4,\hat{s}_5,s_6), \\
s_3\in \mathbb{F}_{q^3},{\ } s_6\in \mathbb{F}_q
\end{array}
&  q^4 \\
\hline
\begin{array}{c}
x(t_1^*,0,0,0,0,t_6) , \\
t_1^*\in \mathbb{F}_{q^3}^*,{\ }
t_6 \in \mathbb{F}_q
\end{array}
&
\begin{array}{c}
x(t_1^*,0,t_1^*s_2,s_4,s_5,\hat{s}_6),\\
s_4\in \mathbb{F}_{q^3},{\ } s_2,s_5\in \mathbb{F}_q
\end{array}
& q^5 \\\hline
\begin{array}{c}
x(t_1^*,0,{\bar{t}}_3^*,0,0,0), \\
t_1^*\in \mathbb{F}_{q^3}^*,{\ }
0\neq {\bar{t}}_3^*\in T^{t_1^*}
\end{array}
&
\begin{array}{c}
x(t_1^*,0,\bar{t}_3^*+t_1^*s_2,s_4,s_5,{s}_6),\\
s_4\in \mathbb{F}_{q^3},{\ } s_2,s_5,s_6\in \mathbb{F}_q
\end{array}
& q^6 \\\hline
\begin{array}{c}
x(t_1^*, t_2^*,0,0,0,0), \\
t_1^*\in \mathbb{F}_{q^3}^*,{\ }
t_2^* \in \mathbb{F}_q^*
\end{array}
&\begin{array}{c}
x(t_1^*,t_2^*,s_3,s_4,s_5,s_6),\\
s_3,s_4\in \mathbb{F}_{q^3},{\ } s_5,s_6\in \mathbb{F}_q
\end{array}
& q^8\\\hline
\end{array}
\end{align*}
where
$\hat{s}_{-}$ is determined by some of $t_{-}^*$, $t_{-}$ and $s_{-}$.
The entries in the 1st column are the representatives
of the distinct conjugacy classes of $U$.
For a fixed representative $x\in U$,
$^Ux$ in the 2nd column is the conjugacy class  of $x$,
and $|^Ux|$  in the 3rd column is the number of
the elements of $^Ux$.
\end{table}
\end{Proposition}
\begin{proof}
We prove the hard cases of the proposition.
Let
$0\neq t_1\in \mathbb{F}_{q^3}^*$, $\bar{t}_3\in \mathbb{F}_{q^3}$, $t_6\in  \mathbb{F}_{q}$,
$u:=x(r_1,r_2,r_3,r_4,r_5,r_6)\in U$
 and
$x(a_1,a_2,a_3,a_4,a_5,a_6):=
   {^u}\big(x_1(t_1)x_3(\bar{t}_3)x_6(t_6)\big)$.

\begin{itemize}
 \item [(1)] Let $\bar{t}_3\in t_1\mathbb{F}_q^+$,
 then there exists $s\in \mathbb{F}_q^+$ such that $\bar{t}_3=st_1$.
 By \ref{prop:conjugacy classes of x_i-3D4},
 \begin{align*}
  a_1=& t_1,{\ }
  a_2= 0,{\ }
  a_3= r_2t_1+\bar{t}_3=(r_2+s)t_1,{\ }
  a_4= -r_2t_1^{q+1}-(r_3-sr_1)t_1^q-(r_3-sr_1)^qt_1,\\
  a_5=& r_2t_1^{q^2+q+1}
       +\phi_0\big(r_1^{q^2}(-r_3t_1^q-r_3^qt_1) \big)
                                 +\phi_0\big(r_3^{q^2}t_1^{q+1} \big)
                                 +\phi_0\big(-t_1r_4^{q} \big)
                                 +\phi_0\big(sr_1^{q^2+q}{t}_1\big),\\
  a_6=& t_6+r_2^2t_1^{q^2+q+1}+r_2a_5+\phi_0\big(-t_1r_3^{q^2+q} \big)
                               +\phi_0\big(-s^2r_1{t}_1^{q^2+q}\big)\\
       & +\phi_0\big(-s{t}_1r_4^q\big)
                    +\phi_0\big(s{t}_1^{q^2}(r_2t_1^{q+1}+t_1r_3^q+t_1^qr_3)\big)\\
     =& t_6+r_2^2t_1^{q^2+q+1}+(r_2+s)a_5+2sr_2t_1^{q^2+q+1} \\
       &+\phi_0\big(s(r_3-sr_1)t_1^{q^2+q}\big)
        -\phi_0\big((r_3-sr_1)^{q^2+q}t_1\big).
 \end{align*}
Let $a_3$, $a_4$ and $a_5$ be fixed,
then $a_6$ is determined uniquely
by \ref{phi_0,3D4} and \ref{zeta_u}.
Hence we get the conjugacy class of $x_1(t_1)x_6(t_6)$.
\begin{align*}
 {^U}\big(x_1(t_1)x_6(t_6)\big)=
 \left\{x(t_1,0,s_2t_1,s_4,s_5,\hat{s}_6) \mid s_4\in \mathbb{F}_{q^3}, s_2,s_5\in\mathbb{F}_{q} \right\}.
\end{align*}
 \item [(2)] Let $\bar{t}_3\notin t_1\mathbb{F}_q^+$.
 By \ref{prop:conjugacy classes of x_i-3D4},
 \begin{align*}
  a_1=& t_1,\quad
  a_2= 0,\quad
  a_3= \bar{t}_3+r_2t_1,\quad
  a_4= -r_2t_1^{q+1}-t_1r_3^q-t_1^qr_3+r_1\bar{t}_3^q+r_1^q\bar{t}_3,\\
  a_5=& r_2t_1^{q^2+q+1}
                                 +\phi_0\big(r_1^{q^2}(-r_3t_1^q-r_3^qt_1) \big)
                                 +\phi_0\big(r_3^{q^2}t_1^{q+1} \big)
                                 +\phi_0\big(-t_1r_4^{q} \big)
                                 +\phi_0\big(r_1^{q^2+q}\bar{t}_3\big),\\
  a_6=& t_6+r_2^2t_1^{q^2+q+1}+r_2a_5+\phi_0\big(-t_1r_3^{q^2+q} \big)
                               +\phi_0\big(-r_1\bar{t}_3^{q^2+q}\big)\\
       & +\phi_0\big(-\bar{t}_3r_4^q\big)
                    +\phi_0\big(\bar{t}_3^{q^2}(r_2t_1^{q+1}+t_1r_3^q+t_1^qr_3)\big).
 \end{align*}

 For the fixed $r_1,r_2,r_3$ and $a_5$,
 let
 \begin{align*}
 T:=\{t_1r_4^q \mid
 r_4\in \mathbb{F}_{q^3}, \text {and }
 r_1,r_2,r_3, a_5 {\text{ are fixed}}
 \}.
 \end{align*}
By \ref{phi_0,3D4}, $|T|=q^2$.
Let $x_0\in T$, then $T=x_0+\ker{\phi_0}$ and
$\bar{t}_3r_4^q\in \frac{\bar{t}_3x_0}{t_1}+\frac{\bar{t}_3}{t_1}\ker{\phi_0}$.
We know $\bar{t}_3\notin t_1\mathbb{F}_q^+$,
so $\frac{\bar{t}_3}{t_1}\in \mathbb{F}_{q^3}^+\backslash \mathbb{F}_q^+$.
Hence
\begin{align*}
  & \{\phi_0(\bar{t}_3r_4^q) \mid t_1r_4^q\in T\}
 = \left\{\phi_0(\frac{\bar{t}_3x_0}{t_1})+\phi_0(t)
    {\, }\middle|{\, }
           t\in \frac{\bar{t}_3}{t_1}\ker{\phi_0} \right\}
 \stackrel{\ref{restriction of phi_0,3D4}}{=} \mathbb{F}_q^+.
\end{align*}
Thus $a_6$ can be every element of $\mathbb{F}_q$ for the fixed $r_1,r_2,r_3$ and $a_5$.
Therefore,
\begin{align*}
 {^U}\big(x_1(t_1)x_3(\bar{t}_3)\big)=
 \left\{x(t_1,0,\bar{t}_3+s_2t_1,s_4,s_5,{s}_6) \mid
          s_4\in \mathbb{F}_{q^3}, s_2, s_5, s_6\in\mathbb{F}_{q} \right\}.
\end{align*}
\end{itemize}
By \ref{prop:conjugacy classes of x_i-3D4}, the other conjugacy classes are also determined.
Then the proposition is obtained.
\end{proof}

\begin{Remark}
\label{conjugacy classes p=2}
If $p=2$, the map $\zeta_u$ in Lemma \ref{zeta_u} is not surjective.
Similarly, Corollary \ref{zeta_u-b} does not hold either.
Thus there are some differences between the classification of conjugacy classes of $U$
for $p=2$ and for $p>2$.
Some results still hold when their proofs does not use the surjective property.
Let $x:=x(t_1, t_2, t_3, t_4, t_5, t_6)\in U$.
If $t_1=t_3=0$, the classifications of conjugacy classes of $x$ for $p=2$ and these for $p>2$
are the same.
Otherwise, the analysis of conjugacy classes of $x$ for $p=2$ and this for $p>2$
are different.
For example, let $t_3^* \in \mathbb{F}_{q^3}^*$,  then
$\{{t_3^*}r^q+{t_3^*}^qr\mid r\in \mathbb{F}_{q^3}\}=
\{\zeta_{t_3^*}(r^{q^2})\mid r\in \mathbb{F}_{q^3}\}
=\mathrm{im}{\,\zeta_{t_3^*}}$ and $|\mathrm{im}{\,\zeta_{t_3^*}}|=q^2$.
Similar to the proof of Proposition \ref{prop:conjugacy classes-3D4},
we obtain the conjugacy class of $x_3(t_3^*)$ for $p=2$ as follows:
\begin{align*}
{^Ux_3(t_3^*)}=&
 {\left\{
 \begin{array}{ll}
 \{x(0,0,t_3^*,\tilde{s}_4, s_5,s_6) \in U \mid
\tilde{s}_4\in \mathrm{im}{\,\zeta_{t_3^* }}, {\ } s_5, s_6\in \mathbb{F}_q \}
& \text{if } p=2\\
 \{x(0,0,t_3^*,s_4,\hat{s}_5,s_6) \in U \mid
s_4\in \mathbb{F}_{q^3}, {\ } s_6\in \mathbb{F}_q \}
& \text{if } p>2\\
 \end{array}
 \right.},
\end{align*}
where $\hat{s}_5\in \mathbb{F}_q$ is determined by $t_3^*$ and $s_4$ (see Proposition \ref{prop:conjugacy classes-3D4}).
\end{Remark}


\section{Irreducible characters}
\label{sec:irreducible characters}
In \cite{Le}, Tung Le constructed and counted all ordinary irreducible characters of $U$,
mainly using Clifford theory.
In this section,
we give the specific constructions of the irreducible characters of $U$
using Clifford's Theorem (see \cite{CR1})
when $p>2$,
so that we can determine the values of the irreducible characters in the next section.

Let $G$ be a finite group,
$N$ a normal subgroup of $G$,
and $K$ a field.
Let $\mathrm{Irr}(G)$ be the set of all complex irreducible characters of $G$,
and $\mathrm{triv}_{G}$  the trivial character of $G$.
Let $H$ be a subgroup of $G$,
$\chi\in \mathrm{Irr}(G)$ and $\lambda \in \mathrm{Irr}(H)$,
then
we denote by $\mathrm{Ind}_H^G{\lambda}$ the character induced from $\lambda$,
and deonte by $\mathrm{Res}^G_H{\chi}$ the restriction of $\chi$ to $H$.
The center of $G$ is denoted by $Z(G)$.
The kernel of $\chi$ is
              $\ker{\chi}:=\{g\in G \mid \chi(g)=\chi(1)\}$.
The commutator subgroup of $G$ is
             $G'=\left<{\,} [x,y] \mid  x,y\in G {\,}\right>$,
           where $[x,y]=x^{-1}y^{-1}xy$.
Let $\lambda \in \mathrm{Irr}(N)$,
then the inertia group in $G$ is
$I_G(\lambda)=\{g\in G \mid  \lambda^g=\lambda \}$
where $\lambda^g(n)=\lambda(gng^{-1})$ for all $n\in N$.
In paricular, $N\unlhd I_G(\lambda) \leqslant G$.

\begin{Notation/Lemma}\label{vartheta}
 Let $\vartheta \colon \mathbb{F}_q^+ \to  \mathbb{C}^*$ denote a fixed nontrivial
 linear character of the additive group $\mathbb{F}_q^+$
 of $\mathbb{F}_q$
 once and for all.
 In particular, $\sum_{x\in \mathbb{F}_q^+}{\vartheta(x)}=0$.
\end{Notation/Lemma}

We determine the irreducible characters of $\mathbb{F}_{q^3}^+$ and $\mathbb{F}_{q}^+$.
\begin{Lemma}\label{vartheta-a,b}
Let $a\in \mathbb{F}_{q^3}$, $b\in \mathbb{F}_{q}$ and
\begin{align*}
\vartheta_a \colon & \mathbb{F}_{q^3}^+ \to  \mathbb{C}^*:x\mapsto \vartheta\pi_q(ax),\\
\vartheta_b \colon & \mathbb{F}_q^+ \to  \mathbb{C}^*:y\mapsto \vartheta\pi_q(by)=\vartheta(by),
\end{align*}
then
$\mathrm{Irr}(\mathbb{F}_{q^3}^+)= \{\vartheta_a \mid  a\in \mathbb{F}_{q^3}\}$
and
$\mathrm{Irr}(\mathbb{F}_{q}^+)= \{\vartheta_b \mid  b\in \mathbb{F}_{q}\}$.
\end{Lemma}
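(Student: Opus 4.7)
The plan is to show that the assignment $a\mapsto \vartheta_a$ produces $q^3$ pairwise distinct linear characters of $\mathbb{F}_{q^3}^+$. Since $\mathbb{F}_{q^3}^+$ is abelian of order $q^3$, it has exactly $q^3$ irreducible (all linear) characters, so such an injection automatically exhausts $\mathrm{Irr}(\mathbb{F}_{q^3}^+)$. The same strategy works for $\mathbb{F}_q^+$.

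First I would check that each $\vartheta_a$ is a group homomorphism into $\mathbb{C}^*$. This is routine: by the $\mathbb{F}_q$-linearity of $\pi_q$ established in \ref{pi_q, 3D4} and by $\vartheta$ being a character of $\mathbb{F}_q^+$ from \ref{vartheta}, one computes $\vartheta_a(x+y)=\vartheta(\pi_q(ax)+\pi_q(ay))=\vartheta_a(x)\vartheta_a(y)$ for all $x,y\in\mathbb{F}_{q^3}$.

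Next I would prove the injectivity of $a\mapsto \vartheta_a$. As this map is itself a group homomorphism $\mathbb{F}_{q^3}^+\to \widehat{\mathbb{F}_{q^3}^+}$, it suffices to show that its kernel is trivial. Assume $\vartheta_a\equiv 1$ with $a\neq 0$ and derive a contradiction. Since $a\neq 0$, multiplication by $a$ is an $\mathbb{F}_q$-linear automorphism of $\mathbb{F}_{q^3}$, so $\{ax:x\in\mathbb{F}_{q^3}\}=\mathbb{F}_{q^3}$. Applying the surjection $\pi_q$ from \ref{pi_q, 3D4} then gives $\{\pi_q(ax):x\in\mathbb{F}_{q^3}\}=\mathbb{F}_q$. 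But then $\vartheta_a\equiv 1$ would force $\vartheta$ to be trivial on all of $\mathbb{F}_q$, contradicting \ref{vartheta}. Hence $a=0$, so the map is injective, and a counting argument finishes the $\mathbb{F}_{q^3}^+$ part.

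For $\mathbb{F}_q^+$, the observation $\pi_q|_{\mathbb{F}_q}=\mathrm{id}_{\mathbb{F}_q}$ from \ref{pi_q, 3D4} already collapses $\vartheta_b$ to $y\mapsto \vartheta(by)$, so the injectivity argument is identical, using only that $y\mapsto by$ is a bijection of $\mathbb{F}_q$ for $b\neq 0$. I do not anticipate any real obstacle; the lemma is the standard identification of the character group of a finite-dimensional $\mathbb{F}_q$-vector space via the nondegenerate $\mathbb{F}_q$-bilinear pairing $(a,x)\mapsto \pi_q(ax)$ composed with a fixed nontrivial additive character of $\mathbb{F}_q$.
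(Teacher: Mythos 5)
Your proposal is correct and follows essentially the same route as the paper: verify each $\vartheta_a$ is a linear character via the $\mathbb{F}_q$-linearity of $\pi_q$, show $a\mapsto\vartheta_a$ is injective by observing that $a\neq 0$ together with the surjectivity of $\pi_q$ would force $\vartheta$ to be trivial, and conclude by counting against $|\mathrm{Irr}(\mathbb{F}_{q^3}^+)|=q^3$. Your phrasing of the injectivity step through the kernel of the homomorphism $a\mapsto\vartheta_a$ is just a slightly more explicit packaging of the paper's ``if $\vartheta_a=\vartheta_c$ then $a=c$'' argument.
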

\begin{proof}
Let $a\in \mathbb{F}_{q^3}$.
By \ref{pi_q, 3D4} and \ref{vartheta}, we get
$
 \vartheta_a(x+y)=\vartheta_a(x)\cdot \vartheta_a(y)$
 { for all } $x,y\in  \mathbb{F}_{q^3}$,
so
$\mathrm{Irr}(\mathbb{F}_{q^3}^+)\supseteq \{\vartheta_a \mid  a\in \mathbb{F}_{q^3}\}$.
Let $a, c\in \mathbb{F}_{q^3}$ and $\vartheta_a=\vartheta_c$, then
$\vartheta_a(x)=\vartheta_c(x)$ for all $x\in  \mathbb{F}_{q^3}$,
i.e. $\vartheta\pi_q(ax)=\vartheta\pi_q(cx)$ for all $x\in  \mathbb{F}_{q^3}$.
Thus $a=c$ by \ref{pi_q, 3D4}.
Otherwise, $\vartheta(y)=1$ for all $ y\in  \mathbb{F}_{q}$,
this is a contradiction since $\vartheta$ is nontrivial by \ref{vartheta}.
Thus $\#\{\vartheta_a \mid  a\in \mathbb{F}_{q^3}\}=q^3$.
Therefore, $\mathrm{Irr}(\mathbb{F}_{q^3}^+)= \{\vartheta_a \mid  a\in \mathbb{F}_{q^3}\}$.
Similarly, we obtain the second formula.
\end{proof}

\begin{Corollary}\label{vartheta-A15b}
Let $u\in \mathbb{F}_{q^3}$, and
$\tilde{\vartheta}_{u} \colon  \mathbb{F}_{q^3}^+ \to  \mathbb{C}^*
:t\mapsto \vartheta\pi_q(u^qt+ut^{q^2})$,
then
$\tilde{\vartheta}_{u}=\vartheta_{\eta^{-1}(\eta+\eta^{q})u^q}$
and
$ \mathrm{Irr}(\mathbb{F}_{q^3}^+)= \{\tilde{\vartheta}_u \mid  u\in \mathbb{F}_{q^3}\}$.
\end{Corollary}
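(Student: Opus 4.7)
The plan is to prove the identity by a direct computation of $\pi_q(u^q t + u t^{q^2})$, and then to deduce the enumeration of $\mathrm{Irr}(\mathbb{F}_{q^3}^+)$ by showing that the map $u\mapsto \eta^{-1}(\eta+\eta^q)u^q$ is a bijection of $\mathbb{F}_{q^3}$.

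First I would unfold the definition of $\pi_q$ and use the additivity of $\phi_0$ (which is an $\mathbb{F}_q$-homomorphism by \ref{phi_0,3D4}) to write
\begin{align*}
\pi_q(u^q t + u t^{q^2}) = \phi_0(\eta u^q t) + \phi_0(\eta u t^{q^2}).
\end{align*}
Applying Frobenius term by term, the first summand expands to $\eta u^q t + \eta^q u^{q^2} t^q + \eta^{q^2} u t^{q^2}$, and the second to $\eta u t^{q^2} + \eta^q u^q t + \eta^{q^2} u^{q^2} t^q$. Collecting coefficients of $t$, $t^q$, $t^{q^2}$ gives
\begin{align*}
\phi_0(\eta u^q t) + \phi_0(\eta u t^{q^2}) = (\eta+\eta^q)u^q t + (\eta^q+\eta^{q^2})u^{q^2} t^q + (\eta^{q^2}+\eta) u t^{q^2},
\end{align*}
which is exactly $\phi_0\bigl((\eta+\eta^q)u^q t\bigr)$. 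Since $\phi_0\bigl((\eta+\eta^q)u^q t\bigr) = \phi_0\bigl(\eta\cdot \eta^{-1}(\eta+\eta^q)u^q t\bigr) = \pi_q\bigl(\eta^{-1}(\eta+\eta^q)u^q t\bigr)$, the first equality $\tilde{\vartheta}_u = \vartheta_{\eta^{-1}(\eta+\eta^q)u^q}$ follows from the definitions in \ref{vartheta-a,b}.

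For the second claim, by \ref{vartheta-a,b} it suffices to show that the map $\Psi\colon \mathbb{F}_{q^3}\to \mathbb{F}_{q^3}$, $u\mapsto \eta^{-1}(\eta+\eta^q)u^q$, is a bijection. Frobenius $u\mapsto u^q$ is an $\mathbb{F}_q$-automorphism of $\mathbb{F}_{q^3}$, so it is enough to verify $\eta+\eta^q\neq 0$ (and $\eta\neq 0$, which is immediate since otherwise $\eta^{q^2}+\eta^q+\eta=0\neq 1$). The hard step, modest though it is, is the non-vanishing $\eta+\eta^q\neq 0$: if $\eta+\eta^q=0$ then $\eta^q=-\eta$, hence $\eta^{q^2}=-\eta^q=\eta$, and substituting into $\eta^{q^2}+\eta^q+\eta=1$ yields $\eta=1\in\mathbb{F}_q$, contradicting $\eta\in\mathbb{F}_{q^3}\setminus\mathbb{F}_q$ (this is in the same spirit as \ref{eta neq 0, 3D4}).

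Once $\Psi$ is known to be a bijection of $\mathbb{F}_{q^3}$, we get
\begin{align*}
\{\tilde{\vartheta}_u \mid u\in \mathbb{F}_{q^3}\} = \{\vartheta_{\Psi(u)} \mid u\in \mathbb{F}_{q^3}\} = \{\vartheta_a \mid a\in \mathbb{F}_{q^3}\} = \mathrm{Irr}(\mathbb{F}_{q^3}^+),
\end{align*}
the last equality by \ref{vartheta-a,b}, completing the proof.
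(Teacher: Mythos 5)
Your proposal is correct and follows essentially the same route as the paper: a direct expansion of $\pi_q(u^qt+ut^{q^2})$ via $\phi_0$ to identify it with $\phi_0\bigl((\eta+\eta^q)u^qt\bigr)$, followed by the observation that $u\mapsto\eta^{-1}(\eta+\eta^q)u^q$ is a bijection because $\eta+\eta^q\neq 0$. The only cosmetic difference is that you re-derive $\eta+\eta^q\neq 0$ directly, while the paper cites Corollary \ref{eta neq 0, 3D4} (which gives the Frobenius-equivalent statement $\eta+\eta^{q^2}\neq 0$), and you spell out the bijection argument that the paper leaves implicit.
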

\begin{proof}
For all $t\in \mathbb{F}_{q^3}^+$,
$
\tilde{\vartheta}_{u}(t)
= \vartheta\pi_q(u^qt+ut^{q^2})
\stackrel{\ref{pi_q, 3D4}}{=}  \vartheta\phi_0(\eta u^qt+\eta ut^{q^2})
\stackrel{\ref{phi_0,3D4}}{=}  \vartheta\phi_0((\eta+\eta^{q}) u^{q}t)\\
{=}  \vartheta\pi_q(\eta^{-1}(\eta+\eta^{q}) u^{q}t)
 =\vartheta_{\eta^{-1}(\eta +\eta^{q})u^{q}}(t)$.
 By \ref{eta neq 0, 3D4}, $\eta +\eta^q\neq 0$.
 By \ref{vartheta-a,b},
 we obtain the second formula.
\end{proof}

\begin{Lemma}\label{restriction center-3D4}
 Let $G$ be a finite group, $Z(G)\subseteq N\trianglelefteq G$,
 and $\chi \in \mathrm{Irr}(G)$.
 Let $\lambda\in \mathrm{Irr}(N)$
 such that $\langle \mathrm{Res}^G_N\chi,  \lambda\rangle_N=e >0$.
 Then for all $g\in Z(G)$,
 \begin{align*}
  \left(\mathrm{Res}^G_N\chi \right)(g)=e \frac{|G|}{|I_G(\lambda)|}\lambda(g)
 \end{align*}
and $g\notin \ker \chi \iff  g \notin \ker \lambda$.
In particular,  if $X \leqslant Z(G)$, then  $X \nsubseteq \ker\chi$
if and only if $X \nsubseteq \ker\lambda$.
\end{Lemma}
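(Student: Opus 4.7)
The plan is to apply Clifford's theorem to the restriction $\mathrm{Res}^G_N\chi$ and then exploit centrality of $g$ to collapse the resulting sum of conjugate characters. Since $N\trianglelefteq G$ and $\langle \mathrm{Res}^G_N\chi,\lambda\rangle_N = e>0$, Clifford's theorem gives
\begin{align*}
\mathrm{Res}^G_N\chi \;=\; e\sum_{i=1}^{t}\lambda^{g_i},
\end{align*}
where $\{g_1,\ldots,g_t\}$ is a transversal of $I_G(\lambda)$ in $G$, so that $t=[G:I_G(\lambda)]=|G|/|I_G(\lambda)|$, and $\lambda^{g_i}(n)=\lambda(g_i^{-1}ng_i)$ for $n\in N$.

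Now I would plug in $g\in Z(G)$. Since $g$ is central, $g_i^{-1}gg_i = g$ for every $i$, and therefore $\lambda^{g_i}(g)=\lambda(g)$ for all $i$. Substituting this into the sum gives
\begin{align*}
(\mathrm{Res}^G_N\chi)(g) \;=\; e\cdot t\cdot \lambda(g) \;=\; e\,\frac{|G|}{|I_G(\lambda)|}\,\lambda(g),
\end{align*}
which is the displayed formula. Taking $g=1$ in the same identity yields $\chi(1)=e\,\frac{|G|}{|I_G(\lambda)|}\,\lambda(1)$, and I will use this to compare $\chi(g)$ with $\chi(1)$.

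For the kernel statement, observe that $g\in Z(G)\subseteq N$, so $g\in\ker\chi$ is equivalent to $\chi(g)=\chi(1)$, which by the two formulas just obtained is equivalent to $\lambda(g)=\lambda(1)$ (the factor $e\,|G|/|I_G(\lambda)|$ is a nonzero positive integer and may be cancelled). By the definition of the kernel of a character, $\lambda(g)=\lambda(1)$ is exactly $g\in\ker\lambda$. Thus $g\notin\ker\chi$ iff $g\notin\ker\lambda$. The ``in particular'' statement follows at once: for $X\leqslant Z(G)$, the containment $X\subseteq\ker\chi$ means every $g\in X$ satisfies $g\in\ker\chi$, which by the equivalence just proved is the same as $X\subseteq\ker\lambda$; negating gives the desired biconditional.

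The only conceptually delicate point is ensuring $\lambda$ is well-defined on central elements in a way that lets us pass between $\lambda(g)=\lambda(1)$ and $g\in\ker\lambda$, but this is automatic since $Z(G)\subseteq N$ by hypothesis, so $\lambda$ is literally a character of a group containing $g$, and the standard characterization of its kernel applies. No computation beyond substituting the Clifford decomposition into the character values is needed, so I do not anticipate any serious obstacle.
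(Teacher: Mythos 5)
Your proof is correct and follows essentially the same route as the paper: both apply Clifford's theorem to write $\mathrm{Res}^G_N\chi$ as $e$ times a sum of conjugates of $\lambda$ over a transversal of $I_G(\lambda)$ in $G$, collapse the sum using centrality of $g$, and then deduce the kernel statement by comparing values at $g$ and at $1$. No issues.
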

\begin{proof}
By Clifford's Theorem,
we have for all $g\in Z(G)$
 \begin{align*}
\left(\mathrm{Res}^G_N\right)\chi(g)
=e \sum_{h\in T_{I_G(\lambda)\backslash G}} {\lambda^h}(g)
\stackrel{g\in Z(G)}{=}e \frac{|G|}{|I_G(\lambda)|}\lambda(g).
 \end{align*}
Let $1$ be the identity element of $G$
and  $g_0\in Z(G)$,
then
\begin{align*}
     \chi(g_0) \neq &  \chi(1)
           \iff     \left(\mathrm{Res}^G_N\chi\right)(g_0) \neq  \left(\mathrm{Res}^G_N\chi\right)(1)
           \iff    \lambda(g_0) \neq  \lambda(1).
\end{align*}
Thus $g\notin \ker \chi \iff  g \notin \ker \lambda$.
\end{proof}

\begin{Definition}[2.1, \cite{Le}]
 Let $\chi$ be an irreducible character of a group $G$.
 $\chi$ is said to be \textbf{almost faithful} if $Z(G)\nleqslant \ker\chi$.
\end{Definition}

We determine some inertia groups.
\begin{Lemma}\label{some inertia groups N-3D4}
Let $T:=X_2X_3X_4X_5X_6$,
$N:=X_4X_5X_6$
and $H:=X_1X_4X_5X_6$.
 \begin{itemize}
 \item[(1)]
The subgroup $N$ is abelian,
$N\trianglelefteq U$, $T\trianglelefteq U$ and $H\leqslant U$ as follows:
\begin{align*}
 \xymatrix{
               &  {\substack{U=TH=\\X_2X_1X_3X_4X_5X_6}} \ar@{-}[dl]_{\trianglelefteq} \ar@{-}[dr]^{\geqslant}
               &      \\
{\substack{ T=\\X_2X_3X_4X_5X_6}} \ar@{-}[dr]_{\trianglerighteq}
 &
 & {\substack{ H=\\X_1X_4X_5X_6}} \ar@{-}[dl]^{\trianglelefteq}  \\
               & {\substack{N=T \cap H= \\X_4X_5X_6}}       &}
\end{align*}
\item[(2)] Let $\lambda \in \mathrm{Irr}(N)$ and $\mathrm{Res}^N_{X_6}\lambda \neq \mathrm{triv}_{X_6}$.
If $\lambda$ satisfies that $\mathrm{Res}^N_{X_5}\lambda = \mathrm{triv}_{X_5}$, we have
\begin{align*}
 I_{U}(\lambda)=\{u\in U \mid \lambda^u=\lambda\}=H.
\end{align*}
\item[(3)]
Let $\lambda \in \mathrm{Irr}(N)$,
the inertia group $I_{T}(\lambda)$ is
\begin{align*}
 I_{T}(\lambda)=
 {\left\{
 \begin{array}{ll}
  T & \text{if }\mathrm{Res}^N_{X_6}{\lambda} =\mathrm{triv}_{X_6}\\
  N & \text{if }\mathrm{Res}^N_{X_6}{\lambda} \neq \mathrm{triv}_{X_6}
 \end{array}
 \right.}.
\end{align*}
\item[(4)]  Let $\lambda \in \mathrm{Irr}(N)$,
then the inertia group $I_{H}(\lambda)$ is
\begin{align*}
 I_{H}(\lambda)=
 {\left\{
 \begin{array}{ll}
  H & \text{if }\mathrm{Res}^N_{X_5}{\lambda} =\mathrm{triv}_{X_5}\\
  N & \text{if }\mathrm{Res}^N_{X_5}{\lambda} \neq \mathrm{triv}_{X_5}
 \end{array}
 \right.}.
\end{align*}
\item [(5)]
Let $\psi \in \mathrm{Irr}(T)$ and $X_6=Z(T)\nsubseteq \ker\psi$,
then the inertia group $I_{U}(\psi)$ is
\begin{align*}
 I_{U}(\psi)=\{u\in U \mid \psi^u=\psi\}=U.
\end{align*}
\end{itemize}
\end{Lemma}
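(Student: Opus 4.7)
For (1), the commutator relations displayed in Section 2 show that no pair among $x_4, x_5, x_6$ admits a nontrivial commutator, so $N$ is abelian. The formulas for ${^u}x_4, {^u}x_5, {^u}x_6$ in Lemma \ref{prop:conjugacy classes of x_i-3D4} take values in $X_4 X_5 X_6 = N$, and those for ${^u}x_2, {^u}x_3$ stay inside $X_2 X_3 N$, giving $N \trianglelefteq U$ and $T \trianglelefteq U$. Normality of $N$ in $U$ makes $H = X_1 N$ a subgroup, and uniqueness of the normal form $x(t_1, \ldots, t_6)$ yields $T \cap H = N$.

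For (3) and (4) I would test $\lambda$-invariance along each generator outside $N$ using Lemma \ref{prop:conjugacy classes of x_i-3D4}: conjugation by $x_1(r_1)$ twists $x_4(t_4)$ by the extra factor $x_5(\phi_0(r_1 t_4^q))$; conjugation by $x_3(r_3)$ twists $x_4(t_4)$ by $x_6(\phi_0(r_3 t_4^q))$; and conjugation by $x_2(r_2)$ twists $x_5(t_5)$ by $x_6(r_2 t_5)$. When the relevant restriction of $\lambda$ is trivial the twist is invisible and the whole one-parameter subgroup sits in the inertia group; when it is nontrivial, invariance for all $t$ would force $\phi_0(r_i t^q) = 0$ or $r_i t = 0$ identically, which by the surjectivity of $\phi_0$ (Lemma \ref{phi_0,3D4}) collapses $r_i$ to $0$. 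This settles (3) and (4). For (2) I use $U = T \cdot X_1$ (each $x_1(r_1)$ can be pushed past the other generators of $U$ modulo $T$): under the double hypothesis of (2), part (4) gives $X_1 \subseteq I_U(\lambda)$ and part (3) gives $I_T(\lambda) = N$, so writing $u \in I_U(\lambda)$ as $u = t \cdot x_1(r_1)$ forces $t \in I_U(\lambda) \cap T = N$, whence $u \in N X_1 = H$.

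For (5) I would use that $X_6 \subseteq Z(U)$ (the commutator list contains no $[x_i, x_6]$) and that $[T, T] = X_6 = Z(T)$ (the first equality because $[x_2, x_5]$ and $[x_3, x_4]$ are the only nontrivial commutators inside $T$; the second from a direct centralizer test against each $x_j(t_j)$). The alternating pairing $B_\mu(\bar x, \bar y) := \mu([x, y])$ on the elementary abelian group $T/X_6$ is, for each nontrivial $\mu \in \mathrm{Irr}(X_6)$, nondegenerate: on the $X_2 \times X_5$ block this is immediate from $\mu(x_6(t_2 t_5))$, and on the $X_3 \times X_4$ block it reduces once more to surjectivity of $\phi_0$, since $B_\mu(x_3(t_3), x_4(t_4)) = \mu(x_6(\phi_0(t_3 t_4^q)))$. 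A dimension count ($|T| = q^9$, with $q^8$ linear characters accounting for $q^8$ of the class sum and leaving $(q-1)q^8$ for the non-linear ones) then confirms that for each nontrivial $\mu \in \mathrm{Irr}(X_6)$ there is exactly one $\psi_\mu \in \mathrm{Irr}(T)$ of dimension $q^4$ with central character $\mu$. Since $X_6 \subseteq Z(U)$, every $U$-conjugate $\psi^u$ has the same restriction to $X_6$ as $\psi$, and uniqueness forces $\psi^u = \psi$, i.e.\ $I_U(\psi) = U$. The main obstacle I anticipate is in (5): verifying nondegeneracy of $B_\mu$ on the $X_3 \times X_4$ block together with the centralizer computation $Z(T) = X_6$; both reduce to careful applications of Lemma \ref{phi_0,3D4} and bookkeeping with the Frobenius $t \mapsto t^q$.
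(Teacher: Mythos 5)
Your proposal is correct, and it is worth separating where it tracks the paper and where it genuinely departs. Parts (1), (3) and (4) proceed essentially as the paper does: read off the twisting factors from Lemma \ref{prop:conjugacy classes of x_i-3D4} (conjugation by $x_1(r_1)$ twists $x_4(t_4)$ by $x_5(\phi_0(r_1t_4^q))$, by $x_3(r_3)$ twists it by $x_6(\phi_0(r_3t_4^q))$, by $x_2(r_2)$ twists $x_5(t_5)$ by $x_6(r_2t_5)$) and use the surjectivity of $t\mapsto\phi_0(rt^q)$ for $r\neq 0$ (Lemma \ref{phi_0,3D4}) to force the offending parameters to vanish when the relevant restriction of $\lambda$ is nontrivial. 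For (2) the paper recomputes $\lambda^u$ for a general $u\in U$ and extracts $r_2=r_3=0$ directly, whereas you deduce (2) formally from (3) and (4) via the factorization $U=T\cdot X_1$ with $T\trianglelefteq U$: since $X_1\subseteq I_H(\lambda)\subseteq I_U(\lambda)$ by (4) and $I_U(\lambda)\cap T=I_T(\lambda)=N$ by (3), any $u=t\,x_1(r_1)$ in the inertia group has $t\in N$, so $I_U(\lambda)=NX_1=H$; this is cleaner and avoids one more pass through the commutator formulas. The real divergence is in (5): the paper writes $\psi=\mathrm{Ind}_N^T\lambda$ with $I_T(\lambda)=N$ and evaluates $\psi^u(h)$ and $\psi(h)$ explicitly on all of $T$, finding both equal to $q^4\lambda(x_6(t_6))$ on $X_6$ and $0$ elsewhere; you instead show each nontrivial $\mu\in\mathrm{Irr}(X_6)$ is fully ramified in $T/X_6$ because the commutator pairing $B_\mu$ is nondegenerate (the $X_2\times X_5$ block trivially, the $X_3\times X_4$ block again by Lemma \ref{phi_0,3D4}), so $\psi$ is the unique irreducible character of $T$ lying over $\mu$, and since $X_6\leqslant Z(U)$ conjugation preserves $\mu$, forcing $\psi^u=\psi$. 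Your route is the standard group-of-central-type argument and is shorter; the paper's computation additionally produces the explicit values of $\psi$ on $T$, which are reused when the character table is assembled in Section \ref{sec:character table}. The only thing to make explicit is the fully-ramified fact you invoke, namely that nondegeneracy of $B_\mu$ on $T/Z(T)$ yields a unique constituent over $\mu$ of degree $[T:Z(T)]^{1/2}=q^4$; your dimension count $\sum_{\chi\in\mathrm{Irr}(T\mid\mu)}\chi(1)^2=[T:X_6]=q^8$ indeed forces uniqueness once one degree-$q^4$ constituent is exhibited, and that existence is supplied by $\mathrm{Ind}_N^T\lambda$ with $I_T(\lambda)=N$ from part (3).
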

\begin{proof}
\begin{itemize}
 \item [(1)]
 From the commutator relations of $U$  and by \ref{prop:conjugacy classes of x_i-3D4},
 we obtain that $N$ is abelian,
 $N$ and $T$ are normal subgroups of $U$, and $H$ is a subgroup of $U$.
 Then $N$ is a normal subgroup of $N$ and $H$, since $N$ is a normal subgroup of $U$.
  \item [(2)]
  Let
  $u:=x(r_1,r_2,r_3,r_4,r_5,r_6)\in U$,  $x:=x_4(t_4)x_5(t_5)x_6(t_6)\in N$,
  and $\lambda \in \mathrm{Irr}(N)$,
  then $\lambda$ is a linear character since $N$ is abelian.
  We have
  \begin{align*}
   \lambda^u(x)
   =& \lambda(u\cdot x\cdot u^{-1})\\
   \stackrel{\ref{prop:conjugacy classes of x_i-3D4}}{=}
   & \lambda\left(x_4(t_4)x_5(t_5+\phi_0(r_1t_4^q))
             x_6(t_6+r_2t_5+\phi_0(r_1r_2t_4^q)+\phi_0(r_3t_4^q))\right)\\
   \stackrel{\mathrm{Res}^N_{X_5}\lambda = \mathrm{triv}_{X_5}}{=}
   & \lambda(x_4(t_4))
      \cdot \lambda(x_6(t_6+r_2t_5+\phi_0(r_1r_2t_4^q)+\phi_0(r_3t_4^q))).
  \end{align*}
Since $X_6\nsubseteq \ker\lambda$ and $X_5\subseteq \ker\lambda$, we get
  \begin{align*}
  I_{U}(\lambda)
 & = \{u\in U  \mid  \lambda^u=\lambda \}
 = \{u\in U  \mid  \lambda(u\cdot x\cdot u^{-1})=\lambda(x)
                      \text{ for all }x \in N\} \\
& \stackrel{\ref{phi_0,3D4}}{=}
          \{u:=x(r_1,r_2,r_3,r_4,r_5,r_6)\in U
          \mid r_2=r_3=0 \}
= X_1X_4X_5X_6
= H.
\end{align*}
\item [(3)] c.f. the proof of (2).
\item [(4)] c.f. the proof of (2).
\item [(5)]
Let $x_6(t_6)\in X_6$.
Since $X_6=Z(T)\trianglelefteq N\trianglelefteq T$,
there exists $\lambda \in \mathrm{Irr}(N)$ such that
$0<e=\langle \mathrm{Res}^T_N\psi, \lambda\rangle_N$,
then
$
\left(\mathrm{Res}^T_N\psi\right)(x_6(t_6)) =e \frac{|T|}{|I_T(\lambda)|}\lambda(x_6(t_6))$.
By \ref{restriction center-3D4},
$X_6=Z(T)\nsubseteq \ker\lambda$
since  $X_6=Z(T)\nsubseteq \ker\psi$.
Then $\mathrm{Res}^N_{X_6}\lambda \neq \mathrm{triv}_{X_6}$.
Then $I_T(\lambda)=N$ by (3),
so $\mathrm{Ind}_N^T\lambda \in \mathrm{Irr}(T)$ by Clifford's Theorem.
Thus $e=1$ and $\psi=\mathrm{Ind}_N^T\lambda$.

Let $u:=x(s_1,s_2,s_3,s_4,s_5,s_6)\in U$ and $h:=x(0,t_2,t_3,t_4,t_5,t_6)\in T$, then
\begin{align*}
 \psi(h)=\left(\mathrm{Ind}_N^T\lambda\right)(h)
= \frac{1}{|N|}\sum_{\substack{y\in T\\ y\cdot h\cdot y^{-1}\in N}}\lambda(y\cdot h\cdot y^{-1}).
\end{align*}
\begin{itemize}
 \item  Let $t_2\neq 0$ or $t_3\neq 0$, then $\psi(h)=\left(\mathrm{Ind}_N^T\lambda\right)(h)=0$ and
       \begin{align*}
  (\psi^u)(h)=\left(\mathrm{Ind}_N^T\lambda\right)^u(h)
  =\left(\mathrm{Ind}_N^T\lambda\right)(u\cdot h\cdot u^{-1})
  \stackrel{\ref{prop:conjugacy classes of x_i-3D4}}{=}0.
       \end{align*}
 \item  Let $t_2=t_3= 0$, then by \ref{prop:conjugacy classes of x_i-3D4}
\begin{align*}
 \psi(h)=\left(\mathrm{Ind}_N^T\lambda\right)(h)
=& \frac{1}{|N|}\sum_{\substack{y\in T\\ y\cdot h\cdot y^{-1}\in N}}\lambda(y\cdot h\cdot y^{-1})
           {=} \frac{1}{|N|}\sum_{y\in T}\lambda(y\cdot h\cdot y^{-1})\\
\stackrel{y:=x(0,r_2,r_3,r_4,r_5,r_6)}{=}&
         \frac{1}{|N|}\sum_{\substack{r_2,r_5,r_6\in \mathbb{F}_q \\r_3,r_4\in \mathbb{F}_{q^3}}}
         \lambda(x_4(t_4)x_5(t_5)x_6(t_6+r_2t_5+\phi_0(r_3t_4^q))),
\end{align*}
and
\begin{align*}
&(\psi^u)(h)
= \left(\mathrm{Ind}_N^T\lambda\right)^u(h)
= \left(\mathrm{Ind}_N^T\lambda\right)(u\cdot h\cdot u^{-1})\\
=& \left(\mathrm{Ind}_N^T\lambda\right)(x_4(t_4)x_5(t_5+\phi_0(s_1t_4^q))
                           x_6(t_6+s_2t_5+\phi_0(s_1s_2t_4^q)+\phi_0(s_3t_4^q)))\\
{=}&\frac{1}{|N|}\sum_{\substack{r_2,r_5,r_6\in \mathbb{F}_q \\r_3,r_4\in \mathbb{F}_{q^3}}}
         \lambda(x_4(t_4)x_5(t_5+\phi_0(s_1t_4^q))x_6(t_6+s_2t_5+\phi_0(s_1s_2t_4^q)+\phi_0(s_3t_4^q))\\
   & \phantom{\frac{1}{|N|}\sum_{\substack{r_2,r_5,r_6\in \mathbb{F}_q \\r_3,r_4\in \mathbb{F}_{q^3}}}}
          \cdot x_6(r_2t_5+r_2\phi_0(s_1t_4^q)+\phi_0(r_3t_4^q))).
\end{align*}
\end{itemize}
Since $X_6\nsubseteq \ker\lambda$, we have for all $u\in U$
\begin{align*}
 (\psi^u)(h)=\psi(h)
 =&\left\{
 \begin{array}{ll}
  q^4 \lambda(x_6(t_6)) & \text{ if } t_2=t_3=t_4=t_5= 0\\
  0  & \text{ otherwise }
 \end{array}
 \right..
\end{align*}
\end{itemize}
Thus $I_U(\psi)=U$.
\end{proof}

We determine the irreducible characters of the abelian group $X_4X_5X_6$.
From now on, we set $N:=X_4X_5X_6$.
\begin{Lemma}\label{irr. char. X456-3D4}
 Let  $A_{17}, A_{16}\in \mathbb{F}_q$, $A_{15}\in \mathbb{F}_{q^3}$ and
     \begin{align*}
       \lambda^{A_{17}, A_{16}, A_{15}}(x_4(t_4)x_5(t_5)x_6(t_6))
          :=\vartheta(A_{17}t_6)
          \cdot \vartheta(A_{16}t_5)
          \cdot \vartheta\pi_q(A_{15}^qt_4+A_{15}t_4^{q^2}),
      \end{align*}
then
$\mathrm{Irr}(N)
 = \left\{ \lambda^{A_{17}, A_{16}, A_{15}}
      {\,}\middle|{\,} A_{17}, A_{16}\in \mathbb{F}_q, A_{15}\in \mathbb{F}_{q^3} \right\}$.
\end{Lemma}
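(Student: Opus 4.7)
The plan is to exploit the fact that $N = X_4X_5X_6$ is abelian, so every irreducible character is a linear character and $\mathrm{Irr}(N)$ is determined by the pointwise characters of an abelian decomposition of $N$.

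First I would observe, using Lemma \ref{some inertia groups N-3D4}(1), that $N$ is abelian. Moreover, each $X_i$ is a root subgroup which is, as a group, isomorphic to the additive group of its ground field: $X_4 \cong \mathbb{F}_{q^3}^+$ and $X_5, X_6 \cong \mathbb{F}_q^+$. Since $N$ is abelian and $X_4 \cap X_5X_6 = \{1\}$, $X_5 \cap X_6 = \{1\}$, the product map gives an internal direct product decomposition
\begin{align*}
 N \;\cong\; X_4 \times X_5 \times X_6 \;\cong\; \mathbb{F}_{q^3}^+ \times \mathbb{F}_q^+ \times \mathbb{F}_q^+.
\end{align*}

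Next I would invoke the standard fact that the irreducible characters of a direct product of finite abelian groups are exactly the (external) tensor products of irreducible characters of the factors. Applying Corollary \ref{vartheta-A15b} to $X_4$ and Lemma \ref{vartheta-a,b} to $X_5$ and $X_6$, for $A_{15}\in\mathbb{F}_{q^3}$ and $A_{16},A_{17}\in\mathbb{F}_q$ the assignment
\begin{align*}
 x_4(t_4)x_5(t_5)x_6(t_6) \;\longmapsto\; \vartheta\pi_q\bigl(A_{15}^qt_4+A_{15}t_4^{q^2}\bigr)\cdot\vartheta(A_{16}t_5)\cdot\vartheta(A_{17}t_6)
\end{align*}
is a well-defined linear character $\lambda^{A_{17},A_{16},A_{15}}$ of $N$, and every irreducible character of $N$ arises in this way. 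Distinctness of the parameters follows from distinctness inside each factor (Lemma \ref{vartheta-a,b} and Corollary \ref{vartheta-A15b}).

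Finally, I would do a counting check: the proposed family has exactly $q\cdot q\cdot q^3 = q^5 = |N| = |\mathrm{Irr}(N)|$ characters, confirming exhaustiveness. There is really no hard step here; the only minor care required is the rewriting of the $X_4$-character via Corollary \ref{vartheta-A15b}, which simply records that the map $u \mapsto \tilde{\vartheta}_u$ parametrizes $\mathrm{Irr}(\mathbb{F}_{q^3}^+)$ in the symmetric form $\vartheta\pi_q(u^q t + u t^{q^2})$ convenient for later computations.
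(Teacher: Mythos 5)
Your argument is correct. The paper actually states this lemma without any proof, evidently regarding it as immediate; your write-up supplies exactly the intended justification — $N$ is abelian and is the internal direct product $X_4\times X_5\times X_6\cong \mathbb{F}_{q^3}^+\times\mathbb{F}_q^+\times\mathbb{F}_q^+$, so its irreducible characters are the tensor products of the factor characters classified in Lemma \ref{vartheta-a,b} and Corollary \ref{vartheta-A15b}, and the count $q^3\cdot q\cdot q=|N|$ confirms exhaustiveness.
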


Now we determine the irreducible characters of the subgroup
$H=X_1X_4X_5X_6$ of $U$.
\begin{Lemma}\label{irr. char. X1456-3D4}
Let $H:=X_1X_4X_5X_6$ and $\tilde{\chi}\in \mathrm{Irr}(H)$,
then $H'=X_5$.
\begin{itemize}
 \item [(1)] If $X_5\subseteq \ker\tilde{\chi}$,
 let
       $\bar{H}_{146}:={{X_5}\backslash H} \cong \bar{X}_1\bar{X}_4\bar{X}_6$,
       $ \bar{\chi}^{A_{17}, A_{15}, A_{12}}\in \mathrm{Irr}(\bar{H}_{146})$,
      \begin{align*}
       & \bar{\chi}^{A_{17}, A_{15}, A_{12}}(\bar{x}_1(t_1)\bar{x}_4(t_4)\bar{x}_6(t_6))
          :=\vartheta(A_{17}t_6)
          \cdot \vartheta\pi_q(A_{15}^qt_4+A_{15}t_4^{q^2})
          \cdot \vartheta\pi_q(A_{12}t_1).
      \end{align*}
 Denote by $\tilde{\chi}^{A_{17}, A_{15}, A_{12}}$ the lift of
 $\bar{\chi}^{A_{17}, A_{15}, A_{12}}$ to $H$, then
 \begin{align*}
  \mathrm{Irr}(H)_1:=& \{\tilde{\chi}\in \mathrm{Irr}(H)  \mid  X_5\subseteq \ker\tilde{\chi}\}
 = \left\{\tilde{\chi}^{A_{17}, A_{15}, A_{12}}\in \mathrm{Irr}(H)
      {\,}\middle|{\,} A_{17}\in \mathbb{F}_q, A_{15},A_{12}\in \mathbb{F}_{q^3} \right\}.
 \end{align*}
 \item [(2)] If $X_5\nsubseteq \ker\tilde{\chi}$,
we have
 \begin{align*}
 \mathrm{Irr}(H)_2:= & \left\{\tilde{\chi}\in \mathrm{Irr}(H) {\,}\middle|{\,} X_5\nsubseteq \ker\tilde{\chi}\right\}
 = \left\{\mathrm{Ind}_N^H\lambda^{A_{17}, A_{16}^*, 0}
      {\,}\middle|{\,} A_{17}\in \mathbb{F}_q, A_{16}^*\in \mathbb{F}_q^* \right\}.
 \end{align*}
\end{itemize}
Thus, $\mathrm{Irr}(H)=\mathrm{Irr}(H)_1 \dot{\cup} \mathrm{Irr}(H)_2$,
i.e. $H$ has $q^7$ linear characters
and $(q-1)q$ irreducible characters of degree $q^3$.
\end{Lemma}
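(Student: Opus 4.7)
The plan is to split $\mathrm{Irr}(H)$ according to whether $X_5\subseteq \ker\tilde\chi$, and in the non-trivial case apply Clifford's theorem along the normal abelian subgroup $N=X_4X_5X_6$. First I would compute $H'$: from the commutator relations in Section 2 one reads off $[x_1(t_1),x_4(t_4)]=x_5(\phi_0(t_1t_4^q))$, while all other pairs among the generators of $H$ commute. Since $\phi_0$ is surjective onto $\mathbb{F}_q$ by Lemma~\ref{phi_0,3D4}, this shows $X_5\subseteq H'$; conversely, modulo $X_5$ all generators of $H$ pairwise commute, so $H/X_5$ is abelian and $H'=X_5$. In particular $X_5\subseteq Z(H)$, the subgroup $N$ is abelian, and $N\trianglelefteq H$.

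For case (1), every linear character factors through $H/H'=H/X_5\cong \bar{X}_1\times \bar{X}_4\times \bar{X}_6$, whose irreducible characters are outer tensor products of characters of the three abelian factors. Lemma~\ref{vartheta-a,b} parameterizes $\mathrm{Irr}(X_6)$ and $\mathrm{Irr}(X_1)$, and Corollary~\ref{vartheta-A15b} supplies the parameterization $\tilde\vartheta_{A_{15}}$ of $\mathrm{Irr}(X_4)$; combining them gives the stated formula for $\tilde\chi^{A_{17},A_{15},A_{12}}$, yielding $q\cdot q^3\cdot q^3=q^7$ linear characters.

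For case (2), let $\tilde\chi\in\mathrm{Irr}(H)$ with $X_5\nsubseteq\ker\tilde\chi$ and pick an irreducible constituent $\lambda$ of $\mathrm{Res}^H_N\tilde\chi$. Since $X_5\subseteq Z(H)\subseteq N$, Lemma~\ref{restriction center-3D4} forces $X_5\nsubseteq \ker\lambda$, so by Lemma~\ref{irr. char. X456-3D4} we may write $\lambda=\lambda^{A_{17},A_{16}^*,A_{15}}$ with $A_{16}^*\in\mathbb{F}_q^*$. Lemma~\ref{some inertia groups N-3D4}(4) then gives $I_H(\lambda)=N$, and Clifford's theorem yields $\tilde\chi=\mathrm{Ind}_N^H\lambda$, irreducible of degree $[H:N]=q^3$.

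It remains to identify the $H$-orbits on such $\lambda$. A direct conjugation computation via Lemma~\ref{prop:conjugacy classes of x_i-3D4} with $u=x_1(r_1)$ (so $r_2=r_3=r_4=r_5=r_6=0$) shows that $X_1$ acts trivially on $X_5$ and on $X_6$, so the parameters $A_{17}$ and $A_{16}^*$ are orbit-invariant while only $A_{15}$ can move. Because $X_1\cap N=\{1\}$, the equality $I_H(\lambda)=N$ forces the $X_1$-action on $\lambda$ to be free, giving an orbit of size $|X_1|=q^3=|\mathbb{F}_{q^3}|$; hence for each fixed pair $(A_{17},A_{16}^*)$ all characters $\lambda^{A_{17},A_{16}^*,A_{15}}$ lie in a single $H$-orbit, and we may take $A_{15}=0$ as canonical representative. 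This produces exactly $(q-1)q$ irreducible characters of degree $q^3$, and the identity $q^7+(q-1)q\cdot q^6=q^8=|H|$ confirms that the list is complete. The main obstacle I anticipate is the orbit analysis in the $A_{15}$-parameter, which would be unpleasant to track by hand through the twisted Artin--Schreier polynomial, but is handled abstractly by reading the stabilizer off Lemma~\ref{some inertia groups N-3D4}(4).
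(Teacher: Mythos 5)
Your proposal is correct and follows essentially the same route as the paper: linear characters via $H/H'=H/X_5$, and for $X_5\nsubseteq\ker\tilde{\chi}$ an application of Lemma \ref{restriction center-3D4}, Lemma \ref{some inertia groups N-3D4}(4) and Clifford's theorem along the abelian normal subgroup $N=X_4X_5X_6$. The only (harmless) divergence is in identifying the $H$-orbit of $\lambda^{A_{17},A_{16}^*,A_{15}}$: you deduce that the orbit sweeps all $A_{15}\in\mathbb{F}_{q^3}$ by orbit--stabilizer counting from $I_H(\lambda)=N$, whereas the paper computes $\bigl(\lambda^{A_{17},A_{16}^*,A_{15}}\bigr)^{x_1(r_1)}$ explicitly and reads off that the $A_{15}$-parameter runs over all of $\mathbb{F}_{q^3}$.
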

\begin{proof}
Let $\tilde{\chi}\in \mathrm{Irr}(H)$.
From the commutator relations, $H'=X_5$ and $Z(H)=X_5X_6$.
\begin{itemize}
 \item [(1)] If $X_5\subseteq \ker\tilde{\chi}$,
 $\tilde{\chi}$ is linear.
Since $H'=X_5$,
all linear characters of $H$
are precisely the lifts to $H$ of the irreducible characters of
the abelian quotient group $X_5\backslash H$.
 \item [(2)] If $X_5\nsubseteq \ker\tilde{\chi}$,
let
$\lambda\in \mathrm{Irr}(N)$
    and $\langle\mathrm{Res}^H_N{\tilde{\chi}}, \lambda \rangle_N >0$.
Since $X_5 \trianglelefteq Z(H) \trianglelefteq N\trianglelefteq H$,
    by \ref{restriction center-3D4} $X_5\nsubseteq \ker\lambda$
    i.e. $\mathrm{Res}^N_{X_5}\lambda \neq \mathrm{triv}_{X_5}$,
then $I_H(\lambda)=N$ by (4) of \ref{some inertia groups N-3D4}.
By Clifford's Theorem,
   $\tilde{\chi}=\mathrm{Ind}_N^H \lambda$ and $\deg(\tilde{\chi})=q^3$.
Then there exists $\lambda^{A_{17}, A_{16}^*, A_{15}}\in \mathrm{Irr}(N) $
   such that $\lambda=\lambda^{A_{17}, A_{16}^*, A_{15}}$.
We note that $X_1$ is a transversal of $N$ in $H$.
Let $r_1\in \mathbb{F}_{q^3}$,
for all $x_4(t_4)x_5(t_5)x_6(t_6)\in N$,
\begin{align*}
  &\left(\lambda^{A_{17}, A_{16}^*, A_{15}}\right)^{x_1(r_1)}(x_4(t_4)x_5(t_5)x_6(t_6))\\
 =& \lambda^{A_{17}, A_{16}^*, A_{15}}(x_1(r_1)\cdot x_4(t_4)x_5(t_5)x_6(t_6)\cdot x_1(r_1)^{-1})\\
 =& \lambda^{A_{17}, A_{16}^*, A_{15}}(x_4(t_4)x_5(t_5+\phi_0(r_1t_4^q))x_6(t_6))\\
 =& \vartheta(A_{17}t_6)
          \cdot \vartheta(A_{16}^*(t_5+\phi_0(r_1t_4^q)))
          \cdot \vartheta\pi_q(A_{15}^qt_4+A_{15}t_4^{q^2})\\
=& \vartheta(A_{17}t_6)
          \cdot \vartheta(A_{16}^*t_5)
          \cdot \vartheta\pi_q(\eta^{-1}((\eta +\eta^{q})A_{15}^{q}+A_{16}^*r_1^{q^2})t_4).
\end{align*}
By Clifford's Theorem, $\tilde{\chi}=\mathrm{Ind}_N^H \lambda^{A_{17}, A_{16}^*, 0}$ and
\begin{align*}
& \mathrm{Res}_N^H\mathrm{Ind}_N^H \lambda^{A_{17}, A_{16}^*, 0}
  =\sum_{r_1\in \mathbb{F}_{q^3}}\left(\lambda^{A_{17}, A_{16}^*, A_{15}}\right)^{x_1(r_1)}
  =\sum_{B_{15}\in \mathbb{F}_{q^3}}\lambda^{A_{17}, A_{16}^*, B_{15}}.
\end{align*}
\end{itemize}
Thus $H$ has $q^7$ linear characters
and $(q-1)q$ irreducible characters of degree $q^3$.
\end{proof}

By the commutator relations, we determine
the conjugacy classes of $H$, and obtain the character table of $H$.
\begin{Corollary}
 Let $x:=x(t_1,0,0,t_4,t_5,t_6)\in H$ be a representative of one conjugacy class of $H$,
 then the character table of $H$ is shown in Table \ref{table:character table-H}.

\begin{table}[!htp]
\caption{Character table of $H$}
\label{table:character table-H}
{
\begin{center}
\begin{tabular}{c|ccc}
$|^Hx|$
& $1$
& $q$
& $q$\\
\begin{tabular}{c}
$x$
\end{tabular}
&  $x_5(t_5)x_6(t_6)$
&  $x_4(t_4^*) x_6(t_6)$
& $x_1(t_1^*)x_4(t_4) x_6(t_6)$
\\\hline
$\tilde{\chi}^{A_{17}, A_{15}, A_{12}}$
& $\vartheta(A_{17}t_6)$
& \begin{tabular}{l}
  $\vartheta(A_{17}t_6)$\\
  $\cdot \vartheta\pi_q(A_{15}^qt_4^*+A_{15}{t_4^*}^{q^2})$
  \end{tabular}
& \begin{tabular}{l}
     $\vartheta(A_{17}t_6)$\\
     $\cdot \vartheta\pi_q(A_{15}^qt_4+A_{15}t_4^{q^2})$\\
     $\cdot \vartheta\pi_q(A_{12}t_1^*)$
   \end{tabular}
\\
$\mathrm{Ind}_N^H\lambda^{A_{17}, A_{16}^*, 0}$
& \begin{tabular}{l}
$q^3\vartheta(A_{17}t_6)$\\
$\cdot\vartheta(A_{16}^*t_5)$
 \end{tabular}
& $0$
& $0$\\
\end{tabular}
\end{center}
}%
\end{table}
\end{Corollary}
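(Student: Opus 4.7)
The plan is to (i) determine the conjugacy classes of $H$ and (ii) evaluate each irreducible character from Lemma \ref{irr. char. X1456-3D4} at the chosen class representatives. For (i), I would specialize the conjugation formulas of Lemma \ref{prop:conjugacy classes of x_i-3D4} to $u = x(r_1, 0, 0, r_4, r_5, r_6) \in H$, which reduce to
\begin{align*}
{}^u x_1(t_1) &= x_1(t_1) \cdot x_5\!\left(\phi_0(-t_1 r_4^q)\right), \\
{}^u x_4(t_4) &= x_4(t_4) \cdot x_5\!\left(\phi_0(r_1 t_4^q)\right),
\end{align*}
together with the centrality of $X_5 X_6$ in $H$. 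From this I expect three families of class representatives: the singletons $x_5(t_5)x_6(t_6)$ in $Z(H)$; the classes $x_4(t_4^*)x_6(t_6)$ with $t_4^* \in \mathbb{F}_{q^3}^*$, each of size $q$; and the classes $x_1(t_1^*)x_4(t_4)x_6(t_6)$ with $t_1^* \in \mathbb{F}_{q^3}^*$, also each of size $q$. Both size statements reduce to the fact that $\phi_0 \colon \mathbb{F}_{q^3} \to \mathbb{F}_q$ is surjective (Lemma \ref{phi_0,3D4}) and that multiplication by a nonzero element of $\mathbb{F}_{q^3}$ is bijective. The count $q^2 + (q^3-1)q + (q^3-1)q^4 = q^7 + q^2 - q$ should then match the total number $q^7 + q(q-1)$ of irreducible characters from Lemma \ref{irr. char. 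X1456-3D4}, confirming completeness.

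For (ii), the linear characters $\tilde{\chi}^{A_{17}, A_{15}, A_{12}}$ are obtained by substituting each representative directly into the explicit formula for $\bar{\chi}^{A_{17},A_{15},A_{12}}$; these three entries are immediate. For the $q^3$-dimensional characters $\mathrm{Ind}_N^H \lambda^{A_{17}, A_{16}^*, 0}$, I would apply the Frobenius formula
\[
\bigl(\mathrm{Ind}_N^H \lambda\bigr)(h) = \frac{1}{|N|} \sum_{\substack{u \in H \\ uhu^{-1} \in N}} \lambda(uhu^{-1}).
\]
On a central $h = x_5(t_5)x_6(t_6)$, every conjugate equals $h$, yielding $[H{:}N]\lambda(h) = q^3\vartheta(A_{17}t_6)\vartheta(A_{16}^* t_5)$. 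On $h = x_1(t_1^*)x_4(t_4)x_6(t_6)$ the $X_1$-component is preserved by $H$-conjugation (no commutator inside $H$ creates an $x_1$-term), so no conjugate lies in $N$ and the induced character vanishes.

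The main nontrivial computation, and the one I expect to be the technical heart, is the vanishing of the induced character on $h = x_4(t_4^*)x_6(t_6)$: here every conjugate $uhu^{-1}$ lies in $N$ and equals $x_4(t_4^*)x_5(\phi_0(r_1 (t_4^*)^q))x_6(t_6)$, so (after cancelling the $[H{:}N]$ factor coming from the free variables $r_4, r_5, r_6$) the induced character equals
\[
\vartheta(A_{17}t_6) \sum_{r_1 \in \mathbb{F}_{q^3}} \vartheta\!\left(A_{16}^* \phi_0(r_1 (t_4^*)^q)\right).
\]
Since $(t_4^*)^q \in \mathbb{F}_{q^3}^*$, the map $r_1 \mapsto \phi_0(r_1 (t_4^*)^q)$ is an $\mathbb{F}_q$-linear surjection onto $\mathbb{F}_q$ with fibres of size $q^2$ by Lemma \ref{phi_0,3D4}, so the sum reduces to $q^2 \sum_{s \in \mathbb{F}_q}\vartheta(A_{16}^* s)$, which vanishes because $A_{16}^* \neq 0$ makes $\vartheta(A_{16}^* \cdot)$ a nontrivial character of $\mathbb{F}_q^+$ by \ref{vartheta}. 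This completes the verification of every entry of Table \ref{table:character table-H}.
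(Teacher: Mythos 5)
Your proposal is correct and follows the same route the paper (implicitly) takes: the corollary is stated there without a written proof, being exactly the combination of the conjugacy classes of $H$ obtained from the commutator relations and the characters of Lemma \ref{irr. char. X1456-3D4}, evaluated via direct substitution for the linear characters and the Frobenius induction formula for $\mathrm{Ind}_N^H\lambda^{A_{17},A_{16}^*,0}$; your class sizes, the count $q^7+q^2-q$, and all table entries check out. The only quibble is cosmetic: in the $x_4(t_4^*)x_6(t_6)$ computation the factor cancelling the $1/|N|$ normalization is $|X_4||X_5||X_6|=q^5=|N|$ coming from the free variables $r_4,r_5,r_6$, not the index $[H{:}N]=q^3$, though the displayed formula you arrive at is the right one.
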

Similar to $H$, we construct the irreducible character of the
normal subgroup $X_2X_3X_4X_5X_6$ of $U$.
\begin{Lemma}\label{irr. char. X23456-3D4}
Let $T:=X_2X_3X_4X_5X_6$ and ${\psi}\in \mathrm{Irr}(T)$,
then $T'=X_6$.
\begin{itemize}
 \item [(1)] If $X_6\subseteq \ker{\psi}$,
 let
       $ \bar{H}_{2345}:={{X_6}\backslash T} \cong \bar{X}_2\bar{X}_3\bar{X}_4\bar{X}_5$,
       $ \bar{\chi}^{A_{16}, A_{15},A_{13}, A_{23}}\in \mathrm{Irr}(\bar{H}_{2345})$,
      \begin{align*}
       & \bar{\chi}^{A_{16}, A_{15},A_{13}, A_{23}}
         (\bar{x}_2(t_2)\bar{x}_3(t_3)\bar{x}_4(t_4)\bar{x}_5(t_5))\\
         &  :=\vartheta(A_{16}t_5)
          \cdot \vartheta\pi_q(A_{15}^qt_4+A_{15}t_4^{q^2})
          \cdot \vartheta\pi_q(-A_{13}t_3)
          \cdot \vartheta(A_{23}t_2).
      \end{align*}
Denote by ${\psi}^{A_{16}, A_{15},A_{13}, A_{23}}$ the lift of
 $\bar{\chi}^{A_{16}, A_{15},A_{13}, A_{23}}$ to $T$,
then
\begin{align*}
 \mathrm{Irr}(T)_1:=& \left\{{\psi}\in \mathrm{Irr}(T) {\,}\middle|{\,} X_6\subseteq \ker{\psi} \right\}
 = \left\{{\psi}^{A_{16}, A_{15},A_{13}, A_{23}}
      {\,}\middle|{\,} A_{16},A_{23}\in \mathbb{F}_q, A_{13},A_{15}\in \mathbb{F}_{q^3}\right\}.
 \end{align*}
 \item [(2)] If $X_6\nsubseteq \ker{\psi}$,
let $\psi^{A_{17}^*}:=\mathrm{Ind}_N^T\lambda^{A_{17}^*, 0, 0}$.
Then
 \begin{align*}
 \mathrm{Irr}(T)_2:=& \left\{\psi\in \mathrm{Irr}(T) {\,}\middle|{\,} X_6\nsubseteq \ker\psi \right\}
 =\left\{\psi^{A_{17}^*}
      {\,}\middle|{\,} A_{17}\in \mathbb{F}_q^* \right\}.
 \end{align*}
\end{itemize}
Thus, $\mathrm{Irr}(T)=\mathrm{Irr}(T)_1 \dot{\cup} \mathrm{Irr}(T)_2$, i.e. $T$ has $q^8$ linear characters
and $(q-1)$ irreducible characters of degree $q^4$.
\end{Lemma}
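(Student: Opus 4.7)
The plan is to follow the same template as the proof of Lemma \ref{irr. char. X1456-3D4}, adapting it from the subgroup $H$ to the normal subgroup $T$. The first step is to read off $T'$ from the commutator relations of $U$ listed at the end of Section~2: the only nontrivial commutators among generators of $T$ are $[x_2(t_2),x_5(t_5)]=x_6(t_2t_5)$ and $[x_3(t_3),x_4(t_4)]=x_6(t_3t_4^q+t_3^qt_4^{q^2}+t_3^{q^2}t_4)$, both landing inside $X_6$. Hence $T'=X_6$ and $T/X_6\cong \bar{X}_2\bar{X}_3\bar{X}_4\bar{X}_5$ is abelian.

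For case (1), every $\psi\in\mathrm{Irr}(T)$ with $X_6\subseteq\ker\psi$ is precisely the lift of an irreducible character of the abelian quotient $T/X_6$. Parameterising $\mathrm{Irr}(\bar{X}_2\bar{X}_3\bar{X}_4\bar{X}_5)$ via Lemma \ref{vartheta-a,b} and Corollary \ref{vartheta-A15b} (using $A_{23}\in\mathbb{F}_q$ on $\bar X_2$, $A_{13}\in\mathbb{F}_{q^3}$ on $\bar X_3$ via $\pi_q$, $A_{15}\in\mathbb{F}_{q^3}$ on $\bar X_4$ via $\tilde\vartheta$, and $A_{16}\in\mathbb{F}_q$ on $\bar X_5$) yields exactly the family $\psi^{A_{16},A_{15},A_{13},A_{23}}$. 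This accounts for $|T/X_6|=q^8$ linear characters.

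For case (2), fix $\psi\in\mathrm{Irr}(T)$ with $X_6\nsubseteq\ker\psi$. Since $X_6=Z(T)\subseteq N\trianglelefteq T$, choose $\lambda\in\mathrm{Irr}(N)$ with $\langle\mathrm{Res}^T_N\psi,\lambda\rangle_N>0$. By Lemma \ref{restriction center-3D4}, $X_6\nsubseteq\ker\lambda$, i.e.\ $\mathrm{Res}^N_{X_6}\lambda\neq\mathrm{triv}_{X_6}$. Lemma \ref{some inertia groups N-3D4}(3) then gives $I_T(\lambda)=N$, and Clifford's Theorem forces $\psi=\mathrm{Ind}_N^T\lambda$, a character of degree $[T:N]=q^4$. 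Writing $\lambda=\lambda^{A_{17}^*,A_{16},A_{15}}$ with $A_{17}^*\in\mathbb{F}_q^*$, I then compute the conjugation action of the transversal $X_2X_3$ of $N$ in $T$ using the formulas in Lemma \ref{prop:conjugacy classes of x_i-3D4}; conjugation by $x_2(t_2)$ alters $A_{16}$ via the $x_6(r_2t_5)$ term in ${}^u x_5(t_5)$, and conjugation by $x_3(t_3)$ alters $A_{15}$ via the $x_6(\phi_0(r_3t_4^q))$ term in ${}^u x_4(t_4)$, both while preserving $A_{17}^*$.

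The main obstacle is verifying that this $T$-action on $\{\lambda^{A_{17}^*,A_{16},A_{15}}\mid A_{16}\in\mathbb{F}_q,\,A_{15}\in\mathbb{F}_{q^3}\}$ is transitive, so that exactly one orbit exists per value of $A_{17}^*$. A counting shortcut makes this manageable: since $|I_T(\lambda)|=|N|$, the orbit of $\lambda$ has size $|T/N|=q^4$, which matches $|\mathbb{F}_q|\cdot|\mathbb{F}_{q^3}|=q^4$; combined with the surjectivity statements of Lemma \ref{phi_0,3D4} and Corollary \ref{vartheta-A15b}, the computed orbit must exhaust all $q^4$ characters sharing the same $A_{17}^*$. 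Therefore each $A_{17}^*\in\mathbb{F}_q^*$ yields a unique induced character $\psi^{A_{17}^*}=\mathrm{Ind}_N^T\lambda^{A_{17}^*,0,0}$ of degree $q^4$, giving $(q-1)$ such characters. The final sanity check $q^8\cdot 1^2+(q-1)\cdot(q^4)^2=q^9=|T|$ confirms that the two families exhaust $\mathrm{Irr}(T)$.
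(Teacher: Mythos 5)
Your proposal is correct and follows essentially the approach the paper intends: the paper omits an explicit proof of this lemma, stating only that it is constructed ``similar to $H$,'' and your argument is precisely that adaptation of the proof of Lemma \ref{irr. char. X1456-3D4}, using $T'=X_6$ for the linear characters and Lemmas \ref{restriction center-3D4} and \ref{some inertia groups N-3D4}(3) with Clifford's Theorem for the degree-$q^4$ family. The orbit-counting shortcut ($|T/N|=q^4$ matching the $q^4$ characters $\lambda^{A_{17}^*,A_{16},A_{15}}$ with fixed central parameter) is a clean and valid way to see that the induced character depends only on $A_{17}^*$, consistent with the explicit fusion computation the paper carries out in the analogous case for $H$.
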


By the commutator relations,
we also determine
the conjugacy classes of $T$
and the character table of $T$.
\begin{Corollary}
 Let $x:=x(0,t_2,t_3,t_4,t_5,t_6)\in T$ be a representative of one conjugacy class of $T$,
 then the character table of $T$ is the one in Table \ref{table:character table-T}.
\begin{table}[!htp]
\caption{Character table of $T$}
\label{table:character table-T}
{\small
\begin{center}
\begin{tabular}{c|ccccc}
$|^Tx|$
& $1$
& $q$
& $q$
& $q$
& $q$\\
\begin{tabular}{c}
$x$
\end{tabular}
&  $x_6(t_6)$
&  $x_5(t_5^*)$
&  $x_4(t_4^*) x_5(t_5)$
&  $x_3(t_3^*)x_4(t_4) x_5(t_5)$
&  \begin{tabular}{l}
    $x_2(t_2^*)x_3(t_3)$\\
    $\cdot x_4(t_4) x_5(t_5)$
   \end{tabular}
\\\hline
${\psi}^{A_{16}, A_{15},A_{13}, A_{23}}$
& $1$
& $\vartheta(A_{16}t_5^*)$
& \begin{tabular}{l}
     $\vartheta(A_{16}t_5)$\\
     $\cdot \vartheta\pi_q(A_{15}^qt_4^*)$\\
     $\cdot \vartheta\pi_q(A_{15}{t_4^*}^{q^2})$\\
   \end{tabular}
& \begin{tabular}{l}
     $\vartheta(A_{16}t_5)$\\
     $\cdot \vartheta\pi_q(A_{15}^qt_4)$\\
     $\cdot \vartheta\pi_q(A_{15}t_4^{q^2})$\\
     $\cdot \vartheta\pi_q(-A_{13}t_3^*)$\\
   \end{tabular}
& \begin{tabular}{l}
     $\vartheta(A_{16}t_5)$\\
     $\cdot \vartheta\pi_q(A_{15}^qt_4)$\\
     $\cdot \vartheta\pi_q(A_{15}t_4^{q^2})$\\
     $\cdot \vartheta\pi_q(-A_{13}t_3)$\\
     $\cdot \vartheta(A_{23}t_2^*)$
   \end{tabular}
\\
$\psi^{A_{17}^*}$
& $q^4\vartheta(A_{17}^*t_6)$
& $0$
& $0$
& $0$
& $0$\\
\end{tabular}
\end{center}
}%
\end{table}
\end{Corollary}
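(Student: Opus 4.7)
The plan is to proceed in three stages: first determine the conjugacy class representatives of $T$, then evaluate each family of irreducible characters from Lemma \ref{irr. char. X23456-3D4} on these representatives.

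For the conjugacy classes, I would specialize the formulas in Lemma \ref{prop:conjugacy classes of x_i-3D4} by setting $r_1 = 0$, since every $u \in T$ has the form $u = x(0, r_2, r_3, r_4, r_5, r_6)$. This collapses the complicated correction terms dramatically: conjugation sends each generator $x_i(t_i)$ ($i \in \{2,3,4,5\}$) to itself times an $x_6$-factor whose argument is linear in the remaining parameters (for example, ${^u}x_5(t_5) = x_5(t_5) x_6(r_2 t_5)$ and ${^u}x_2(t_2) = x_2(t_2) x_6(-t_2 r_5)$). Using surjectivity of $\phi_0$ (Lemma \ref{phi_0,3D4}), each non-central representative in the table has a conjugacy class of size exactly $q$ (obtained by sweeping the $x_6$-coordinate through all of $\mathbb{F}_q$), while $x_6(t_6) \in Z(T)$ gives classes of size $1$. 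A count of the classes yields $q^8 + q - 1$, matching $|\mathrm{Irr}(T)| = q^8 + (q-1)$.

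For the linear characters $\psi^{A_{16}, A_{15}, A_{13}, A_{23}}$ lifted from $X_6 \backslash T$, the table entries come directly from substituting the coordinates of each representative into the formula from Lemma \ref{irr. char. X23456-3D4}; the $x_6$-component is killed because $X_6 \subseteq \ker\psi^{A_{16}, A_{15}, A_{13}, A_{23}}$, so the four relevant columns can be read off from the definition. For the induced characters $\psi^{A_{17}^*} = \mathrm{Ind}_N^T \lambda^{A_{17}^*, 0, 0}$, I would appeal to the explicit computation already performed in part (5) of Lemma \ref{some inertia groups N-3D4}, which establishes
\begin{align*}
\left(\mathrm{Ind}_N^T \lambda^{A_{17}^*, 0, 0}\right)\bigl(x(0, 0, t_3, t_4, t_5, t_6)\bigr) = \begin{cases} q^4\, \vartheta(A_{17}^* t_6) & \text{if } t_3 = t_4 = t_5 = 0,\\ 0 & \text{otherwise.} \end{cases}
\end{align*}
A parallel Frobenius-reciprocity calculation, using the $x_2$-conjugation row from Lemma \ref{prop:conjugacy classes of x_i-3D4} and summing over $r_5 \in \mathbb{F}_q$, extends this vanishing to representatives involving $x_2(t_2^*)$.

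The main obstacle is the last stage: verifying that the induced character vanishes on all conjugacy classes outside $X_6$. The mechanism is cancellation via a central character: whenever $h$ has a non-trivial component outside $X_6$, the conjugates $y h y^{-1}$ acquire $x_6$-corrections whose $\lambda$-value is $\vartheta\bigl(A_{17}^* \cdot (\text{linear in conjugation parameters})\bigr)$, and summing over the free parameter produces $\sum_{x \in \mathbb{F}_q} \vartheta(x) = 0$ by Lemma \ref{vartheta}. This is routine once the bookkeeping is set up, and is essentially packaged by part (5) of Lemma \ref{some inertia groups N-3D4} for four of the five representatives, with the fifth (the $x_2$-case) handled by the same sum-of-roots-of-unity argument.
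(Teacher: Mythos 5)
Your proposal is correct and follows essentially the route the paper intends (the paper omits the proof, saying only that the commutator relations determine the conjugacy classes and the character table, and your specialization $r_1=0$ of Lemma \ref{prop:conjugacy classes of x_i-3D4}, the direct substitution for the lifted linear characters, and the appeal to the computation in part (5) of Lemma \ref{some inertia groups N-3D4} are exactly the ingredients it relies on). One tiny simplification: for representatives with $t_2^*\neq 0$ no cancellation sum is needed, since conjugation preserves the $x_2$-coordinate, so no conjugate of such an element lies in $N=X_4X_5X_6$ and the induction sum is empty.
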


Now we give
the explicit constructions of the irreducible characters of $U$ when $p>2$.
\begin{Proposition}\label{construction of irr. char. of 3D4}
Let $p>2$,
$A_{12},A_{13},A_{15}\in \mathbb{F}_{q^3}$,
and $A_{23},A_{16},A_{17}\in \mathbb{F}_q$.

Let $A_{12}^*,A_{13}^*,A_{15}^* \in \mathbb{F}_{q^3}^*$,
$A_{23}^*,A_{16}^*,A_{17}^*\in \mathbb{F}_q^*$
and $T^{A_{13}^*}$ be a transversal of $A_{13}^*\mathbb{F}_q^+$ in $\mathbb{F}_{q^3}^+$.
Then all of the pairwise orthogonal irreducible characters of $U$ can be constructed as follows:
 \begin{itemize}
  \item [(1)]
  Let
  $\bar{U}:={X_3X_4X_5X_6}\backslash U=\bar{X}_2\bar{X}_1$,
  $\bar{\chi}_{lin}^{A_{12},A_{23}}\in \mathrm{Irr}(\bar{U})$,
  $\bar{\chi}_{lin}^{A_{12},A_{23}}(\bar{x}_2(t_2)\bar{x}_1(t_1))
         :=\vartheta \pi_q(A_{12}t_1)\cdot \vartheta(A_{23}t_2)$,
  and denote by $\chi_{lin}^{A_{12},A_{23}}$
  the lift of $\bar{\chi}_{lin}^{A_{12},A_{23}}$ to $U$,
  then
  \begin{align*}
   \mathfrak{F}_{lin}
    :=& \{\chi\in \mathrm{Irr}(U) \mid X_3X_4X_5X_6\subseteq \ker{\chi}\}
    = \left\{\chi_{lin}^{A_{12},A_{23}} {\,}\middle|{\,} A_{12}\in \mathbb{F}_{q^3}, A_{23}\in \mathbb{F}_{q} \right\}.
  \end{align*}
  \item [(2)]
  Let $\bar{A}_{12}^{A_{13}^*}\in T^{A_{13}^*}$,
  $\bar{U}:={X_4X_5X_6}\backslash U=\bar{X}_2\bar{X}_1\bar{X}_3$,
  $\bar{H}:=\bar{X}_1\bar{X}_3$,
  $\bar{\chi}_{3,q}^{A_{13},{A}_{12}}\in \mathrm{Irr}(\bar{H})$
  with
  $
  \bar{\chi}_{3,q}^{A_{13},{A}_{12}}(\bar{x}_1(t_1)\bar{x}_3(t_3))
         :=\vartheta \pi_q({A}_{12}t_1-A_{13}t_3)$.
 Let $\chi_{3,q}^{A_{13}^*,\bar{A}_{12}^{A_{13}^*}}$
 be the lift of $\mathrm{Ind}_{\bar{H}}^{\bar{U}} \bar{\chi}_{3,q}^{A_{13}^*,\bar{A}_{12}^{A_{13}^*}}$ to $U$,
  then
  \begin{align*}
   \mathfrak{F}_{3}
    :=& \{\chi\in \mathrm{Irr}(U) \mid  X_4X_5X_6\subseteq \ker{\chi},{\ }X_3 \nsubseteq \ker{\chi}\}
    = \left\{\chi_{3,q}^{A_{13}^*,\bar{A}_{12}^{A_{13}^*}}
    {\,}\middle|{\,} A_{13}^*\in \mathbb{F}_{q^3}^*, \bar{A}_{12}^{A_{13}^*}\in T^{A_{13}^*} \right\}.
  \end{align*}
  \item [(3)]
  Let
  $\bar{U}:={X_5X_6}\backslash U=\bar{X}_2\bar{X}_1\bar{X}_3\bar{X}_4$,
  $\bar{H}:=\bar{X}_2\bar{X}_3\bar{X}_4$,
  $\bar{\chi}_{4,q^3}^{A_{15},A_{23},A_{13}}\in \mathrm{Irr}(\bar{H})$,
  and
  \begin{align*}
  \bar{\chi}_{4,q^3}^{A_{15},A_{23},A_{13}}(\bar{x}_2(t_2)\bar{x}_3(t_3)\bar{x}_4(t_4))
         := \vartheta(A_{23}t_2)\cdot \vartheta\pi_q(-A_{13}t_3)
            \cdot \vartheta \pi_q(A_{15}^qt_4+A_{15}t_4^{q^2}).
  \end{align*}
  Denote by $\chi_{4,q^3}^{A_{15}^*,A_{23}}$
  the lift of $\mathrm{Ind}_{\bar{H}}^{\bar{U}} \bar{\chi}_{4,q^3}^{A_{15}^*,A_{23},0}$ to $U$,
  then
  \begin{align*}
   \mathfrak{F}_{4}
    :=& \{\chi\in \mathrm{Irr}(U) \mid  X_5X_6\subseteq \ker{\chi},{\ }X_4 \nsubseteq \ker{\chi}\}
    = \left\{\chi_{4,q^3}^{A_{15}^*,A_{23}}
    {\,}\middle|{\,} A_{15}^*\in \mathbb{F}_{q^3}^*, A_{23}\in \mathbb{F}_{q} \right\}.
  \end{align*}
  \item [(4)]
  Let
  $\bar{U}:={X_6}\backslash U=\bar{X}_2\bar{X}_1\bar{X}_3\bar{X}_4\bar{X}_5$,
  $\bar{H}:=\bar{X}_2\bar{X}_3\bar{X}_4\bar{X}_5$,
  $\bar{\chi}_{5,q^3}^{A_{16},A_{23},A_{13},A_{15}}\in \mathrm{Irr}(\bar{H})$,
  and
 \begin{align*}
  &\bar{\chi}_{5,q^3}^{A_{16},A_{23},A_{13},A_{15}}
           (\bar{x}_2(t_2)\bar{x}_3(t_3)\bar{x}_4(t_4)\bar{x}_5(t_5))\\
         :=& \vartheta(A_{23}t_2)\cdot \vartheta \pi_q(-A_{13}t_3)
           \cdot \vartheta \pi_q(A_{15}^qt_4+A_{15}t_4^{q^2})
           \cdot \vartheta(A_{16}t_5).
\end{align*}
  Denote by $\chi_{5,q^3}^{A_{16}^*,A_{23},A_{13}}$
  the lift of $\mathrm{Ind}_{\bar{H}}^{\bar{U}} \bar{\chi}_{5,q^3}^{A_{16}^*,A_{23},A_{13},0}$ to $U$,
  then
  \begin{align*}
   \mathfrak{F}_{5}
    :=& \{\chi\in \mathrm{Irr}(U) \mid  X_6\subseteq \ker{\chi},{\ }X_5 \nsubseteq \ker{\chi}\}
    = \left\{\chi_{5,q^3}^{A_{16}^*,A_{23},A_{13}}
    {\,}\middle|{\,} A_{16}^*\in \mathbb{F}_{q}^*, A_{23}\in \mathbb{F}_{q}, A_{13}\in \mathbb{F}_{q^3} \right\}.
  \end{align*}
  \item [(5)]
  Let
  $ H:=X_1X_4X_5X_6$,
  $ \bar{H}:={X_4X_5}\backslash H \cong \bar{X}_1\bar{X}_6$,
  $ \bar{\chi}_{6,q^4}^{A_{17},A_{12}}\in \mathrm{Irr}(\bar{H})$,
  and
  \begin{align*}
  \bar{\chi}_{6,q^4}^{A_{17},A_{12}}(\bar{x}_1(t_1)\bar{x}_6(t_6))
    :=\vartheta\pi_q(A_{12}t_1)\cdot \vartheta(A_{17}t_6).
  \end{align*}
Let
  $\tilde{\chi}_{6,q^4}^{A_{17},A_{12}}$
    denote the lift of $\bar{\chi}_{6,q^4}^{A_{17},A_{12}}$
    from $\bar{H}$ to $H$,
  and $\chi_{6,q^4}^{A_{17}^*,A_{12}}:=\mathrm{Ind}_{H}^{U} \tilde{\chi}_{6,q^4}^{A_{17}^*,A_{12}}$,
  then
  \begin{align*}
   \mathfrak{F}_{6}
    :=& \{\chi\in \mathrm{Irr}(U) \mid  X_6 \nsubseteq \ker{\chi}\}
    = \left\{ \chi_{6,q^4}^{A_{17}^*,A_{12}}
    {\,}\middle|{\,} A_{17}^*\in \mathbb{F}_{q}^*, A_{12}\in \mathbb{F}_{q^3} \right\}.
  \end{align*}
 \end{itemize}
Hence $\mathrm{Irr}(U)=\mathfrak{F}_{lin} \dot{\cup} \mathfrak{F}_{3}
                        \dot{\cup} \mathfrak{F}_{4} \dot{\cup} \mathfrak{F}_{5} \dot{\cup} \mathfrak{F}_{6}$.
\end{Proposition}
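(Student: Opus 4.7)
The plan is to partition $\mathrm{Irr}(U)$ via the normal chain
$1 \trianglelefteq X_6 \trianglelefteq X_5X_6 \trianglelefteq X_4X_5X_6 \trianglelefteq X_3X_4X_5X_6 \trianglelefteq U$
(normality of each layer follows from the commutator relations of Section 2). Every $\chi \in \mathrm{Irr}(U)$ lies in exactly one of the five families $\mathfrak{F}_{lin}, \mathfrak{F}_3, \mathfrak{F}_4, \mathfrak{F}_5, \mathfrak{F}_6$ according to the smallest member of the chain that is \emph{not} contained in $\ker\chi$, so the families are automatically pairwise disjoint and their union is all of $\mathrm{Irr}(U)$. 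The task is then to construct each family explicitly and to show the stated parameters give all the distinct irreducibles.

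For $\mathfrak{F}_{lin}$ the commutator relations give $U' \subseteq X_3X_4X_5X_6$, so every character trivial on $X_3X_4X_5X_6$ is linear and is the lift of a character of the abelian quotient $\bar{X}_2 \times \bar{X}_1$, giving exactly the $q^4$ characters $\chi_{lin}^{A_{12}, A_{23}}$. For $\mathfrak{F}_j$ with $j \in \{3,4,5\}$ I pass to the appropriate quotient $\bar{U}$ (killing $X_{j+1} \cdots X_6$) and apply Clifford's theorem to the abelian subgroup $\bar{H}$ named in the statement: a direct conjugation calculation using Lemma \ref{prop:conjugacy classes of x_i-3D4} shows that, under the nondegeneracy hypothesis on the new parameter ($A_{13}^*$, $A_{15}^*$, or $A_{16}^*$), the inertia group equals $\bar{H}$, and induction yields an irreducible character. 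The transversal $T^{A_{13}^*}$ in $\mathfrak{F}_3$ reflects the $\bar{X}_2$-action shifting $A_{12}$ by elements of $A_{13}^* \mathbb{F}_q^+$, so distinct induced characters correspond to distinct cosets of $A_{13}^*\mathbb{F}_q^+$ in $\mathbb{F}_{q^3}^+$.

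The main work is family $\mathfrak{F}_6$, which I handle by Clifford's theorem along $N = X_4X_5X_6 \trianglelefteq H = X_1 N \leq U$. Given $\chi \in \mathrm{Irr}(U)$ with $X_6 \nsubseteq \ker\chi$, pick $\lambda \in \mathrm{Irr}(N)$ appearing in $\mathrm{Res}^U_N \chi$; since $X_6 \leq Z(U)$, Lemma \ref{restriction center-3D4} forces $X_6 \nsubseteq \ker\lambda$, so $\lambda = \lambda^{A_{17}^*, A_{16}, A_{15}}$ with $A_{17}^* \in \mathbb{F}_q^*$. A direct computation using the $X_2$-action formula (which shifts $A_{16} \mapsto A_{16} + A_{17}^* r_2$) and the $X_1$-action formula (which mixes $A_{15}$ with $A_{16}$ via Corollary \ref{vartheta-A15b}) shows that the $U$-orbit of $\lambda$ sweeps out all $q^4$ characters $\lambda^{A_{17}^*, A_{16}', A_{15}'}$ at fixed $A_{17}^*$, so one may normalize to $\lambda = \lambda^{A_{17}^*, 0, 0}$. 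By Lemma \ref{some inertia groups N-3D4}(2) the inertia group is $I_U(\lambda) = H$, and Gallagher's theorem gives $q^3$ extensions $\tilde{\chi}_{6,q^4}^{A_{17}^*, A_{12}}$ parameterized by the characters of the abelian quotient $H/N \cong X_1$. Clifford's theorem then produces the $(q-1)q^3$ irreducible characters $\chi_{6,q^4}^{A_{17}^*, A_{12}} = \mathrm{Ind}_H^U \tilde{\chi}_{6,q^4}^{A_{17}^*, A_{12}}$ of degree $|U|/|H| = q^4$.

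The main obstacle is the bookkeeping in $\mathfrak{F}_6$: I must verify carefully that the orbit-representative choice $A_{16} = A_{15} = 0$ is genuinely unique (not collapsing otherwise-distinct characters) and that the extensions of $\lambda^{A_{17}^*, 0, 0}$ to $H$ indexed by $A_{12} \in \mathbb{F}_{q^3}$ induce to distinct characters of $U$, which follows from $I_U(\lambda) = H$ via the Clifford correspondence. As a final consistency check, summing $\sum_\chi \chi(1)^2$ over the five families with the claimed parameter ranges yields $q^4 + (q^3-1)q^4 + (q^3-1)q^7 + (q-1)q^{10} + (q-1)q^{11} = q^{12} = |U|$, confirming no character has been missed and pinning down the partition.
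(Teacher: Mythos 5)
Your proposal is correct and follows essentially the same route as the paper: the same partition of $\mathrm{Irr}(U)$ by kernels along the chain $X_6\leqslant X_5X_6\leqslant X_4X_5X_6\leqslant X_3X_4X_5X_6\leqslant U$, the same quotient-and-induce constructions with the stated subgroups $\bar{H}$ for $\mathfrak{F}_{lin},\mathfrak{F}_{3},\mathfrak{F}_{4},\mathfrak{F}_{5}$, and Clifford theory through $N=X_4X_5X_6$ and $H=X_1X_4X_5X_6$ with $I_U(\lambda^{A_{17}^*,0,0})=H$ for $\mathfrak{F}_{6}$. The only cosmetic differences are that the paper reaches the normalized character $\lambda^{A_{17}^*,0,0}$ by first passing through $T=X_2X_3X_4X_5X_6$ and its characters $\psi^{A_{17}^*}$ rather than by your orbit-sweeping argument on $\mathrm{Irr}(N)$, it lists the $q^3$ extensions $\tilde{\chi}^{A_{17}^*,0,A_{12}}$ explicitly (Lemma \ref{irr. char. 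X1456-3D4}) instead of citing Gallagher, and your closing verification $\sum_{\chi}\chi(1)^2=q^{12}=|U|$ is a useful completeness check that the paper omits.
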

\begin{proof}
Let $\chi \in \mathrm{Irr}(U)$.
\begin{itemize}
\item [(1)]  {\it Family $\mathfrak{F}_{lin}$, where $X_3X_4X_5X_6 \subseteq \ker\chi$}.

Since $U'=X_3X_4X_5X_6$,
all linear characters of $U$
are precisely the lifts of the linear characters of
the quotient group ${X_3X_4X_5X_6}\backslash U$.
\item [(2)]
{\it Family $\mathfrak{F}_{3}$, where $X_4X_5X_6 \subseteq \ker\chi$
              and $X_3\nsubseteq \ker\chi$}.

Let $\bar{U}:={X_4X_5X_6}\backslash U=\bar{X}_2\bar{X}_1\bar{X}_3$,
then the commutator relation in $\bar{U}$ is
$[\bar{x}_2(t_2), \bar{x}_1(t_1)]= \bar{x}_3(t_2t_1)$.
Let $\bar{H}:=\bar{X}_1\bar{X}_3$,
$\bar{\chi}_{3,q}^{A_{13},{A}_{12}}\in \mathrm{Irr}(\bar{H})$
and
\begin{align*}
 \bar{\chi}_{3,q}^{A_{13},{A}_{12}}(\bar{x}_1(t_1)\bar{x}_3(t_3))
          :=\vartheta \pi_q({A}_{12}t_1)\cdot \vartheta \pi_q(-A_{13}t_3).
\end{align*}
We note that $\bar{X}_2$ is a transversal of $\bar{H}$ in $\bar{U}$,
and  $Z(\bar{U})=\bar{X}_3$.
For all $s_2\in \mathbb{F}_q$,
\begin{align*}
 & \left(\bar{\chi}_{3,q}^{A_{13}^*,{A}_{12}}\right)^{\bar{x}_2(s_2)}
 (\bar{x}_1(t_1)\bar{x}_3(t_3))
 =\bar{\chi}_{3,q}^{A_{13}^*,{A}_{12}}
   (\bar{x}_2(s_2)\cdot \bar{x}_1(t_1)\bar{x}_3(t_3) \cdot \bar{x}_2(s_2)^{-1})\\
 =& \bar{\chi}_{3,q}^{A_{13}^*,{A}_{12}}
     (\bar{x}_1(t_1)\bar{x}_3(t_3+s_2t_1))
 =\vartheta \pi_q({A}_{12}t_1-A_{13}^*(t_3+s_2t_1))\\
 =&\vartheta \pi_q(({A}_{12}-s_2A_{13}^*)t_1-A_{13}^*t_3)
 =\vartheta \pi_q(({A}_{12}-s_2A_{13}^*)t_1)\cdot \vartheta \pi_q(-A_{13}^*t_3),
\end{align*}
so $I_{\bar{U}}(\bar{\chi}_{3,q}^{A_{13}^*,{A}_{12}})=\bar{H}$.
By Clifford theory,
$\tilde{\chi}_{3,q}^{A_{13}^*,{A}_{12}}:=\mathrm{Ind}_{\bar{H}}^{\bar{U}} \bar{\chi}_{3,q}^{A_{13}^*,{A}_{12}}
\in \mathrm{Irr}(\bar{U})$ and
\begin{align*}
 \mathrm{Res}^{\bar{U}}_{\bar{H}}
 \tilde{\chi}_{3,q}^{A_{13}^*,{A}_{12}}
 =\sum_{s_2\in \mathbb{F}_q}{\left(\bar{\chi}_{3,q}^{A_{13}^*,{A}_{12}}\right)^{\bar{x}_2(s_2)}}
 =\sum_{s_2\in \mathbb{F}_q}{\bar{\chi}_{3,q}^{A_{13}^*,{A}_{12}-s_2A_{13}^*}}.
\end{align*}

By Clifford theory,
there are $q^2(q^3-1)$ almost faithful irreducible characters of $\bar{U}$, i.e.
$
\{ \tilde{\chi}_{3,q}^{A_{13}^*,\bar{A}_{12}^{A_{13}^*}}
 \mid
A_{13}^*\in \mathbb{F}_{q^3}^*, \bar{A}_{12}^{A_{13}^*}\in T^{A_{13}^*}
\}$.
Let ${\chi}_{3,q}^{A_{13}^*,\bar{A}_{12}^{A_{13}^*}}$
be the lift of $\tilde{\chi}_{3,q}^{A_{13}^*,\bar{A}_{12}^{A_{13}^*}}$
to $U$.
Thus,
 \begin{align*}
   \mathfrak{F}_{3}
    =& \{\chi\in \mathrm{Irr}(U) \mid  X_4X_5X_6\subseteq \ker{\chi},{\ }X_3 \nsubseteq \ker{\chi}\}
    = \left\{\chi_{3,q}^{A_{13}^*,\bar{A}_{12}^{A_{13}^*}}
    {\,}\middle|{\,} A_{13}^*\in \mathbb{F}_{q^3}^*, \bar{A}_{12}^{A_{13}^*}\in T^{A_{13}^*} \right\}.
  \end{align*}
\item[(3)]
The proof of the Family $\mathfrak{F}_{4}$ is an adaptation of the proof of the Family $\mathfrak{F}_{3}$.
\item [(4)]
The proof of the Family $\mathfrak{F}_{5}$ is also an adaptation of the proof of the Family $\mathfrak{F}_{3}$.
\item [(5)]
{\it Family $\mathfrak{F}_{6}$, where $X_6\nsubseteq \ker\chi$}.

Let $T:=X_2X_3X_4X_5X_6$,  $N:=X_4X_5X_6$,
and $\chi\in \mathrm{Irr}(U)$ such that $X_6\nsubseteq \ker(\chi)$,
then $Z(T)=Z(U)=X_6$.
Let $\psi\in \mathrm{Irr}(T)$ and
$\langle \psi, \mathrm{Res}^U_T \chi \rangle_T >0$,
then $X_6\nsubseteq \ker \psi$ by \ref{restriction center-3D4}.
Let $\lambda^{A_{17}, A_{16}, A_{15}}\in \mathrm{Irr}(N)$
and
 \begin{align*}
  &\lambda^{A_{17}, A_{16}, A_{15}}(x_4(t_4)x_5(t_5)x_6(t_6))
         :=\vartheta(A_{17}t_6)\cdot \vartheta(A_{16}t_5)
          \cdot \vartheta\pi_q(A_{15}^qt_4+A_{15}t_4^{q^2}),
\end{align*}
then
by \ref{irr. char. X23456-3D4},
$\{\psi\in \mathrm{Irr}(T) \mid  X_6 \nsubseteq \ker{\psi}\}
    = \{\mathrm{Ind}_N^T{\lambda^{A_{17}^*,0,0}} \mid A_{17}^*\in \mathbb{F}_{q}^*\}
    = \{\psi^{A_{17}^*} \mid A_{17}^*\in \mathbb{F}_{q}^*\}$.
By (5) of \ref{some inertia groups N-3D4}, $I_U(\psi^{A_{17}^*})=U$,
so $\mathrm{Res}^{U}_T{\chi}=z^*\psi^{A_{17}^*}$ for some $z^*\in \mathbb{N}^*$.
Thus
\begin{align*}
 \mathfrak{F}_6
=& \{\chi\in \mathrm{Irr}(U) \mid X_6 \nsubseteq \ker\chi\}
= \bigcup _{\substack{\psi\in \mathrm{Irr}(T)\\ X_6 \nsubseteq \ker\psi}}
   \{\chi\in \mathrm{Irr}(U) \mid  \langle \chi, \mathrm{Ind}_T^U\psi \rangle_U>0 \}\\
{=}
 & \bigcup _{A_{17}^*\in \mathbb{F}_q^*}\{\chi\in \mathrm{Irr}(U)
     \mid   \langle \chi ,
                \mathrm{Ind}_T^U{\psi^{A_{17}^*}} \rangle_U>0\}\\
{=}
 & \bigcup _{A_{17}^*\in \mathbb{F}_q^*}\{\chi\in \mathrm{Irr}(U)
     \mid   \langle \chi , \mathrm{Ind}_N^U{\lambda^{A_{17}^*,0,0}}\rangle_U>0\}.
\end{align*}
Let $H:={X}_1{X}_4{X}_5{X}_6$,
then $H'=X_5$ and $Z(H)=X_4X_5\trianglelefteq H$.
Let $\tilde{\chi}^{A_{17},A_{15},A_{12}}\in \mathrm{Irr}(H)$
as in (1) of
\ref{irr. char. X1456-3D4}.
For all $x_4(t_4)x_5(t_5)x_6(t_6)\in N$,
\begin{align*}
   & \left(\mathrm{Res}^H_N \tilde{\chi}^{A_{17}^*,0,A_{12}}\right)(x_4(t_4)x_5(t_5)x_6(t_6))
 =  \tilde{\chi}^{A_{17}^*,0,A_{12}}(x_4(t_4)x_5(t_5)x_6(t_6))\\
 =&  \bar{\chi}(\bar{x}_4(t_4)\bar{x}_6(t_6))
 = \vartheta(A_{17}^*t_6)
 = \lambda^{A_{17}^*,0,0}(x_4(t_4)x_5(t_5)x_6(t_6)).
\end{align*}
Thus $\mathrm{Res}^H_N {\tilde{\chi}^{A_{17}^*,0,A_{12}}}=\lambda^{A_{17}^*,0,0}$
   { for all } $A_{12}\in \mathbb{F}_{q^3}$.
By (4) of \ref{some inertia groups N-3D4}
   we get    $I_H(\lambda^{A_{17}^*,0,0})=H$,
thus $\mathrm{Ind}_N^H{\lambda^{A_{17}^*,0,0}}
         =\sum_{A_{12}\in \mathbb{F}_{q^3}}{\tilde{\chi}^{A_{17}^*,0,A_{12}}} $.
By (2) of \ref{some inertia groups N-3D4}
       $I_U(\lambda^{A_{17}^*,0,0})=H$.
Then by Clifford's Theorem, $\mathrm{Ind}_H^U \tilde{\chi}^{A_{17}^*,0,A_{12}}\in \mathrm{Irr}(U)$
for all $A_{17}^*\in \mathbb{F}_q^*$.
Thus
\begin{align*}
 \mathfrak{F}_6
{=}& \bigcup _{A_{17}^*\in \mathbb{F}_q^*}\{\chi\in \mathrm{Irr}(U)
     \mid   \langle \chi ,
                      \mathrm{Ind}_H^U\mathrm{Ind}_N^H{\lambda^{A_{17}^*,0,0}}\rangle_U>0\}\\
{=}& \bigcup _{\substack{A_{17}^*\in \mathbb{F}_q^*\\A_{12}\in \mathbb{F}_{q^3}}}
      \{\chi\in \mathrm{Irr}(U)
     \mid   \langle \chi , \mathrm{Ind}_H^U{\tilde{\chi}^{A_{17}^*,0,A_{12}}}\rangle_U>0\}\\
{=}& \left\{\mathrm{Ind}_H^U{\tilde{\chi}^{A_{17}^*,0,A_{12}}}
    {\,}\middle|{\,}  A_{17}^*\in \mathbb{F}_q^*,A_{12}\in \mathbb{F}_{q^3} \right\}.
\end{align*}
For $A_{17}^*\in \mathbb{F}_q^*$ and $A_{12}\in \mathbb{F}_{q^3}$,
$X_4X_5\subseteq \ker(\tilde{\chi}^{A_{17}^*,0,A_{12}})$ and $X_4X_5 \trianglelefteq H$,
so $\tilde{\chi}^{A_{17}^*,0,A_{12}}$ is the lift to $H$ of some irreducible character of
 $\bar{H}:={{X_4X_5}\backslash H}\cong \bar{X}_1\bar{X}_6$.
Let
$\bar{\chi}_{6,q^4}^{A_{17}^*,A_{12}}\in \mathrm{Irr}(\bar{H})$,
and
$\bar{\chi}_{6,q^4}^{A_{17}^*,A_{12}}(\bar{x}_1(t_1)\bar{x}_6(t_6))
    :=\vartheta\pi_q(A_{12}t_1)\cdot \vartheta(A_{17}^*t_6)$.
Let  $\tilde{\chi}_{6,q^4}^{A_{17}^*,A_{12}}$
    denote the lift of $\bar{\chi}_{6,q^4}^{A_{17}^*,A_{12}}$
    from $\bar{H}$ to $H$,
then $\tilde{\chi}_{6,q^4}^{A_{17}^*,A_{12}}=\tilde{\chi}^{A_{17}^*,0,A_{12}}$.
Let ${\chi}_{6,q^4}^{A_{17}^*,A_{12}}:=\mathrm{Ind}_H^U{\tilde{\chi}_{6,q^4}^{A_{17}^*,A_{12}}}$,
then
\begin{align*}
 \mathfrak{F}_6
{=}& \left\{\mathrm{Ind}_H^U{\tilde{\chi}^{A_{17}^*,0,A_{12}}}
    {\,}\middle|{\,}  A_{17}^*\in \mathbb{F}_q^*,A_{12}\in \mathbb{F}_{q^3} \right\}
{=} \left\{{\chi}_{6,q^4}^{A_{17}^*,A_{12}}
       {\,}\middle|{\,} A_{17}^*\in \mathbb{F}_q^*, A_{12}\in \mathbb{F}_{q^3} \right\}.
\end{align*}
\end{itemize}
\end{proof}

\begin{Remark}
From the proof (5) of Propositon \ref{construction of irr. char. of 3D4},
we obtain that
\begin{align*}
 & \mathrm{Ind}_T^U \psi^{A_{17}^*}
=\mathrm{Ind}_N^U{\lambda^{A_{17}^*,0,0}}
=\mathrm{Ind}_H^U \bigg({\sum_{A_{12}\in \mathbb{F}_q}{\tilde{\chi}^{A_{17}^*,0,A_{12}}}}\bigg)\\
=& \mathrm{Ind}_H^U \bigg({\sum_{A_{12}\in \mathbb{F}_q}{\tilde{\chi}_{6,q^4}^{A_{17}^*,A_{12}}}}\bigg)
=\sum_{A_{12}\in \mathbb{F}_q} {\mathrm{Ind}_H^U{\tilde{\chi}_{6,q^4}^{A_{17}^*,A_{12}}}}
=\sum_{A_{12}\in \mathbb{F}_q}{{\chi}_{6,q^4}^{A_{17}^*,A_{12}}}
\end{align*}
\end{Remark}

\begin{Remark}
\label{irr. characters p=2}
The irreducible characters in the family $\mathfrak{F}_4$ in Proposition \ref{construction of irr. char. of 3D4}
are different for $p=2$, and the construction of these irreducible characters for can be found in  \cite[3.3]{Le}.
The irreducible characters in the other families are the same as these for $p>2$.
\end{Remark}


\section{Character table}
\label{sec:character table}
In this section, we determine the character table of
the Sylow $p$-subgroup $U$ of ${^3}D_4(q^3)$ when $p>2$.
We use the notations as in Section \ref{sec:irreducible characters}.
\begin{Proposition}
\label{prop:character table-3D4}
The character table of $U$ for $p>2$
is the one in Table \ref{table:character table-3D4}.
\end{Proposition}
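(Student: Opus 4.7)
The plan is, for each irreducible character $\chi$ in one of the five families $\mathfrak{F}_{lin}, \mathfrak{F}_3, \mathfrak{F}_4, \mathfrak{F}_5, \mathfrak{F}_6$ produced in Proposition~\ref{construction of irr. char. of 3D4}, to evaluate $\chi(g)$ on each of the nine conjugacy class representatives $g$ of Proposition~\ref{prop:conjugacy classes-3D4} and to check agreement with Table~\ref{table:character table-3D4}. The linear family $\mathfrak{F}_{lin}$ is immediate: each $\chi_{lin}^{A_{12},A_{23}}$ is the lift of a character of the abelian quotient $X_3X_4X_5X_6 \backslash U = \bar X_2 \bar X_1$, so
$$\chi_{lin}^{A_{12},A_{23}}\bigl(x(t_1,t_2,t_3,t_4,t_5,t_6)\bigr) = \vartheta\pi_q(A_{12}t_1) \cdot \vartheta(A_{23}t_2),$$
which fills in the first row directly.

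For the four induced families I would apply the standard formula
$$\chi(g) = \frac{1}{|H|} \sum_{\substack{u \in U \\ ugu^{-1} \in H}} \lambda(ugu^{-1})$$
with $H$ the inducing subgroup, $\lambda$ the linear character whose induction is $\chi$, and the transversal of $H$ in $U$ chosen to be the product of the root subgroups of $U$ missing from $H$; the relevant conjugation data are read off from Lemma~\ref{prop:conjugacy classes of x_i-3D4} (in the appropriate quotient for $\mathfrak{F}_3, \mathfrak{F}_4, \mathfrak{F}_5$). Two simplifications appear systematically. First, if $ugu^{-1} \notin H$ for every $u$ in the transversal, then $\chi(g) = 0$. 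Second, when some of the transversal parameters remain free after the constraint $ugu^{-1} \in H$ is imposed, the remaining sum is a product of $\vartheta$- and $\vartheta\pi_q$-terms which collapses to $0$ (using $\sum_{t \in \mathbb{F}_q} \vartheta(at) = 0$ for $a \neq 0$, together with Lemma~\ref{vartheta-a,b}) unless the coefficient of the free parameter vanishes. Corollary~\ref{zeta_u-b}, Lemma~\ref{pi_q, 3D4} and Lemma~\ref{restriction of phi_0,3D4} are the main tools used to rewrite expressions such as $A_{15}^q t_4 + A_{15} t_4^{q^2}$ after a conjugation shift and to reduce summations over $r_4 \in \mathbb{F}_{q^3}$ to summations over $\mathbb{F}_q$.

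The main obstacle is the family $\mathfrak{F}_6$, whose characters are induced from the index-$q^4$ subgroup $H = X_1 X_4 X_5 X_6$. The transversal $X_2 X_3$ has size $q \cdot q^3 = q^4$, so the full conjugation formula $^u\bigl(x_1(t_1)x_3(\bar{t}_3)x_6(t_6)\bigr)$ from Lemma~\ref{prop:conjugacy classes of x_i-3D4} must be used with $r_2 \in \mathbb{F}_q$ and $r_3 \in \mathbb{F}_{q^3}$ both free. Evaluating $\chi_{6,q^4}^{A_{17}^*,A_{12}}$ on the classes $x(t_1^*, 0, 0, 0, 0, t_6)$, $x(t_1^*, 0, \bar t_3^*, 0, 0, 0)$ and $x(t_1^*, t_2^*, 0, 0, 0, 0)$ demands careful bookkeeping to separate the part of the exponent depending on $r_3$ from the rest. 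I expect the pivotal step to be showing that the inner $r_3$-sum is forced to zero when $t_1 \neq 0$ and the class representative carries a nontrivial $x_2$- or $x_3$-component, via an $\mathbb{F}_q$-automorphism argument in the spirit of Lemma~\ref{zeta_u} applied to the linear form $r_3 \mapsto t_1 r_3^q + t_1^q r_3$ that appears in the $X_4$-coordinate of the conjugate, while on the remaining classes the surviving $r_2$-sum yields the nonzero values listed in the table. Once this family is handled, the analogous but strictly lighter arguments for $\mathfrak{F}_3, \mathfrak{F}_4, \mathfrak{F}_5$ dispose of the remaining rows, and assembling the values completes the verification of Table~\ref{table:character table-3D4}.
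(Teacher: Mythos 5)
Your proposal follows essentially the same route as the paper: evaluate each character via the standard induced-character formula $\chi(g)=\frac{1}{|H|}\sum_{u,\,ugu^{-1}\in H}\lambda(ugu^{-1})$, feed in the conjugation data of Lemma~\ref{prop:conjugacy classes of x_i-3D4}, and use exactly the two vanishing mechanisms you list. One small correction: for $\mathfrak{F}_6$ on the classes with nontrivial $x_2$- or $x_3$-part, the paper's (and the correct) reason for vanishing is your \emph{first} simplification --- the $X_2$- and $X_3$-coordinates of every conjugate are nonzero (e.g.\ $\bar t_3^*+r_2t_1^*\neq 0$ since $\bar t_3^*\notin t_1^*\mathbb{F}_q$), so no conjugate lies in $H=X_1X_4X_5X_6$ --- rather than a cancellation of the $r_3$-sum coming from the $X_4$-coordinate, which is irrelevant to membership in $H$.
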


\begin{sidewaystable}
\caption{Character table of $U$ for $p>2$}
\label{table:character table-3D4}
{
\footnotesize
\begin{align*}
\renewcommand\arraystretch{1.5}
\begin{array}{l|ccccccccc}
×
& I_8
& \begin{array}{c}
  x_1(t_1^*)\\
  \cdot x_6(t_6)
  \end{array}
&
\begin{array}{c}
  x_1(t_1^*)\\
  \cdot x_3(\bar{t}_3^*)
\end{array}
&
\begin{array}{c}
 x_2(t_2^*)\\
 \cdot x_4(t_4)\\
 \cdot x_5(t_5)
\end{array}
&
\begin{array}{c}
 x_2(t_2^*)\\
 \cdot x_1(t_1^*)
\end{array}
&
\begin{array}{c}
x_3(t_3^*)\\
\cdot x_5(t_5)
\end{array}
& x_4(t_4^*)
& x_5(t_5^*)
& x_6(t_6^*)\\
\hline
\chi_{lin}^{0,0}
& 1
& 1
& 1
& 1 & 1 & 1 & 1 & 1 & 1\\
\chi_{lin}^{A_{12}^*,0}
& 1
& \vartheta \pi_q (A_{12}^*t_1^*)
& \vartheta \pi_q (A_{12}^*t_1^*)
& 1
& \vartheta \pi_q (A_{12}^*t_1^*)
& 1 & 1 & 1 & 1 \\
\chi_{lin}^{0,A_{23}^*}
& 1
& 1
& 1
& \vartheta (A_{23}^*t_2^*)
& \vartheta (A_{23}^*t_2^*)
& 1 & 1 & 1 & 1 \\
\chi_{lin}^{A_{12}^*,A_{23}^*}
& 1
& \vartheta \pi_q (A_{12}^*t_1^*)
& \vartheta \pi_q (A_{12}^*t_1^*)
& \vartheta (A_{23}^*t_2^*)
& \begin{array}{l}
\vartheta \pi_q (A_{12}^*t_1^*)\\
\cdot \vartheta (A_{23}^*t_2^*)
\end{array}
& 1 & 1 & 1 & 1
\\
\chi_{3,q}^{A_{13}^*,\bar{A}_{12}^{A_{13}^*}}
& q
& \divideontimes
& \divideontimes
& 0
& 0
& \begin{array}{l}
  \vartheta\pi_q(-A_{13}^*t_{3}^*)\\
  \cdot q
  \end{array}
& q & q & q
\\
\chi_{4,q^3}^{A_{15}^*,A_{23}}
& q^3
& 0
& 0
& \divideontimes
& 0
& 0
& \begin{array}{l}
\vartheta\pi_q({A_{15}^*}^qt_4^*)\\
\cdot \vartheta\pi_q({A_{15}^*}{t_4^*}^{q^2})\\
\cdot  q^3
  \end{array}
& q^3 & q^3
\\
\chi_{5,q^3}^{A_{16}^*,A_{23},A_{13}}
& q^{3}
& 0
& 0
& \divideontimes
& 0
& \divideontimes
& 0
& \begin{array}{l}
  \vartheta(A_{16}^*t_{5}^*)\\
  \cdot q^{3}
  \end{array}
& q^{3}
\\
\chi_{6,q^4}^{A_{17}^*,A_{12}}
& q^{4}
& \divideontimes
& 0
& 0 & 0
& 0 & 0 & 0
& \begin{array}{l}
  \vartheta(A_{17}^*t_{6}^*)\\
  \cdot q^{4}
  \end{array}
\end{array}
\\
\end{align*}
}
where
 the elements of the 1st column (i.e. the row headers)
 are the complete pairwise orthogonal
  irreducible characters of $U$,
and more details can be seen in Proposition \ref{construction of irr. char. of 3D4},
 e.g. $\bar{A}_{12}^{A_{13}^*}\in T^{A_{13}^*}$.
 The entries of the 1st row (i.e. the column headers)
 are all of the representatives of conjugacy classes of $U$,
and more details can be seen in Proposition \ref{prop:conjugacy classes-3D4},
e.g. $0\neq {\bar{t}}_3^*\in T^{t_1^*}$.
The 6 $\divideontimes$-values  in the table are followed:
\end{sidewaystable}
\newpage
 \leftline{The (9,3)-entry of Table \ref{table:character table-3D4} is}
 \begin{align*}
  \chi_{6,q^4}^{A_{17}^*,A_{12}}\left(x_1(t_1^*)x_6(t_6)\right)
 =& \sum_{r_3\in \mathbb{F}_{q^3}}{\vartheta \pi_q
    \left( A_{12}t_1^*+A_{17}^*\phi_0( -t_1^*r_3^{q^2+q})+A_{17}^*t_6 \right)}\\
 =& \vartheta \pi_q(A_{12}t_1^*+A_{17}^*t_6)
   \cdot \sum_{r_3\in \mathbb{F}_{q^3}}{\vartheta
    \left( A_{17}^*\phi_0 ( -t_1^*r_3^{q^2+q}) \right)},
\end{align*}
 The (8,5)-entry of Table \ref{table:character table-3D4} is
\begin{align*}
  \chi_{5,q^3}^{A_{16}^*,A_{23},A_{13}}\left(x_3(t_3^*)x_5(t_5) \right)
 =& \sum_{r_1\in \mathbb{F}_{q^3}}{\vartheta \pi_q
    \left( -A_{13}t_3^*+A_{16}^*\phi_0( t_3^*r_1^{q^2+q})+A_{16}^*t_5 \right)}\\
 =& \vartheta \pi_q(-A_{13}t_3^*+A_{16}^*t_5)
   \cdot \sum_{r_1\in \mathbb{F}_{q^3}}{\vartheta
    \left( A_{16}^*\phi_0( t_3^*r_1^{q^2+q}) \right)},
\end{align*}
 The (8,7)-entry of Table \ref{table:character table-3D4} is
\begin{align*}
 & \chi_{5,q^3}^{A_{16}^*,A_{23},A_{13}}\left(x_2(t_2^*)x_4(t_4)x_5(t_5) \right)\\
=& \sum_{r_1\in \mathbb{F}_{q^3}}\vartheta \pi_q
   ( A_{23}t_2^*+A_{13}r_1t_2^*
    +A_{16}^*(\phi_0 \left(r_1t_4^q \right)-t_2^*r_1^{q^2+q+1})+A_{16}^*t_5 )\\
=&\vartheta (A_{23}t_2^*+A_{16}^*t_5)
  \cdot \sum_{r_1\in \mathbb{F}_{q^3}}\vartheta \pi_q
   \left( A_{13}r_1t_2^*
    +A_{16}^*(\phi_0 \left(r_1t_4^q \right)-t_2^*r_1^{q^2+q+1}) \right),
\end{align*}
 The (7,5)-entry of Table \ref{table:character table-3D4} is
\begin{align*}
 &\chi_{4,q^3}^{A_{15}^*,A_{23}}\left(x_2(t_2^*)x_4(t_4)x_5(t_5) \right)\\
=& \sum_{r_1\in \mathbb{F}_{q^3}}\vartheta \pi_q
    ( A_{23}t_2^*
    +{A_{15}^*}^q(t_4-t_2^*r_1^{q+1})
                +A_{15}^*(t_4^{q^2}-t_2^*r_1^{q^2+1}) )\\
=& \vartheta(A_{23}t_2^*)
 \cdot \sum_{r_1\in \mathbb{F}_{q^3}}\vartheta \pi_q
    \left( {A_{15}^*}^q(t_4-t_2^*r_1^{q+1})
                +A_{15}^*(t_4^{q^2}-t_2^*r_1^{q^2+1}) \right),
\end{align*}
The (6,3)-entry of Table \ref{table:character table-3D4} is
\begin{align*}
 &\chi_{3,q}^{A_{13}^*,\bar{A}_{12}^{A_{13}^*}}\left(x_1(t_1^*)x_6(t_6)\right)
=  \sum_{r_2\in \mathbb{F}_{q}}\vartheta \pi_q
    \left( (\bar{A}_{12}^{A_{13}^*}-A_{13}^*r_2)t_1^* \right),
\end{align*}
 The (6,4)-entry of Table \ref{table:character table-3D4} is
\begin{align*}
 &\chi_{3,q}^{A_{13}^*,\bar{A}_{12}^{A_{13}^*}}\left(x_1(t_1^*)x_3(\bar{t}_3^*)\right)
= \sum_{r_2\in \mathbb{F}_{q}}\vartheta \pi_q
    \left( (\bar{A}_{12}^{A_{13}^*}-A_{13}^*r_2)t_1^*-\bar{t}_3^*A_{13}^* \right).
 \end{align*}

\begin{proof}
Let $x:=x(t_1,t_2,t_3,t_4,t_5,t_6)\in U$.
\begin{itemize}
 \item[(1)] {\it Calculate the values of $\chi_{6,q^4}^{A_{17}^*,A_{12}}$
 for all $A_{17}^*\in \mathbb{F}_q^*$ and $A_{12}\in \mathbb{F}_{q^3}$}.

We use the notations of (5) of Proposition \ref{construction of irr. char. of 3D4},
then
\begin{align*}
\chi_{6,q^4}^{A_{17}^*,A_{12}}\left(x\right)
=& \left(\mathrm{Ind}_H^U \tilde{\chi}_{6,q^4}^{A_{17}^*,A_{12}}\right)(x)
= \frac{1}{|H|}
    \sum_{\substack{g\in {U}\\g\cdot x\cdot g^{-1}\in H} }
    \tilde{\chi}_{6,q^4}^{A_{17}^*,A_{12}}(g\cdot x\cdot g^{-1})\\
=& \frac{1}{|H|}
    \sum_{\substack{g\in {U}\\g\cdot x\cdot g^{-1}\in H} }
    \bar{\chi}_{6,q^4}^{A_{17}^*,A_{12}}({X_4X_5}\cdot{(g x g^{-1})}).
\end{align*}
Thus,
\begin{align*}
& \chi_{6,q^4}^{A_{17}^*,A_{12}}\left(x_1(t_1^*)x_3(\bar{t}_3^*)\right)
=\chi_{6,q^4}^{A_{17}^*,A_{12}}\left(x_2(t_2^*)x_4(t_4)x_5(t_5)\right)\\
=& \chi_{6,q^4}^{A_{17}^*,A_{12}}\left(x_2(t_2^*)x_1(t_1^*)\right)
=\chi_{6,q^4}^{A_{17}^*,A_{12}}\left(x_3(t_3^*)x_5(t_5)\right)
\stackrel{gxg^{-1}\notin H}{=}0.
\end{align*}
Then
\begin{align*}
 &\chi_{6,q^4}^{A_{17}^*,A_{12}}\left( x_4(t_4)x_5(t_5)x_6(t_6) \right)\\
=& \frac{1}{|H|}
    \sum_{\substack{g:=x(r_1,r_2,r_3,r_4,r_5,r_6)\in {U}
                  \\g\cdot  x_4(t_4)x_5(t_5)x_6(t_6)\cdot g^{-1}\in H} }
    \tilde{\chi}_{6,q^4}^{A_{17}^*,A_{12}}(g\cdot x_4(t_4)x_5(t_5)x_6(t_6)\cdot g^{-1})\\
{=}
 & \frac{1}{|H|}
    \sum_{\substack{r_1,r_3,r_4\in \mathbb{F}_{q^3}\\r_2,r_5,r_6\in \mathbb{F}_q} }
    \tilde{\chi}_{6,q^4}^{A_{17}^*,A_{12}}
    (x_4(t_4)x_5(t_5+\phi_0\left(r_1t_4^q\right))\\
  & \phantom{\frac{1}{|H|}
    \sum_{\substack{r_1,r_3,r_4\in \mathbb{F}_{q^3}\\r_2,r_5,r_6\in \mathbb{F}_q} }
    \tilde{\chi}_{6,q^4}^{A_{17}^*,A_{12}}
    (}
  \cdot x_6(t_6+r_2t_5+\phi_0(r_1r_2t_4^q)+\phi_0(r_3t_4^q)))\\
= & \frac{1}{|H|}
    \sum_{\substack{r_1,r_3,r_4\in \mathbb{F}_{q^3}\\r_2,r_5,r_6\in \mathbb{F}_q} }
    \bar{\chi}_{6,q^4}^{A_{17}^*,A_{12}}
    (\bar{x}_6\left(t_6+r_2t_5+\phi_0(r_1r_2t_4^q)+\phi_0(r_3t_4^q)\right))\\
= & \frac{1}{q^3}
    \sum_{\substack{r_1,r_3\in \mathbb{F}_{q^3}\\r_2\in \mathbb{F}_q} }
    \bar{\chi}_{6,q^4}^{A_{17}^*,A_{12}}
    (\bar{x}_6\left(t_6+r_2t_5+\phi_0(r_1r_2t_4^q)+\phi_0(r_3t_4^q)\right)),
\end{align*}
so,
\begin{align*}
& \chi_{6,q^4}^{A_{17}^*,A_{12}}(I_8)=q^4,{\ }
 \chi_{6,q^4}^{A_{17}^*,A_{12}}(x_4(t_4^*))
 =\chi_{6,q^4}^{A_{17}^*,A_{12}}(x_5(t_5^*))
 =0,
 {\ }
 \chi_{6,q^4}^{A_{17}^*,A_{12}}(x_6(t_6^*))
  =q^4\cdot \vartheta(A_{17}^*t_6^*).
\end{align*}
Then
\begin{align*}
 \chi_{6,q^4}^{A_{17}^*,A_{12}}\left( x_1(t_1^*)x_6(t_6) \right)
=& \frac{1}{|H|}
    \sum_{\substack{g\in {U}\\g\cdot x_1(t_1^*)x_6(t_6)\cdot g^{-1}\in H} }
    \tilde{\chi}_{6,q^4}^{A_{17}^*,A_{12}}(g\cdot x_1(t_1^*)x_6(t_6)\cdot g^{-1})\\
\stackrel{g:=x(r_1,r_2,r_3,r_4,r_5,r_6)}{=}
 & \frac{1}{|H|}
    \sum_{\substack{r_2=0\\r_1,r_3,r_4\in \mathbb{F}_{q^3}\\r_5,r_6\in \mathbb{F}_q} }
    \bar{\chi}_{6,q^4}^{A_{17}^*,A_{12}}
    (\bar{x}_1(t_1^*)
     \bar{x}_6(t_6+\phi_0\left(-t_1^*r_3^{q^2+q}\right)))\\
= & \sum_{r_3\in \mathbb{F}_{q^3}}
    \bar{\chi}_{6,q^4}^{A_{17}^*,A_{12}}
    (\bar{x}_1(t_1^*)
     \bar{x}_6(t_6+\phi_0\left(-t_1^*r_3^{q^2+q}\right)))\\
= & \vartheta \pi_q(A_{12}t_1^*+A_{17}^*t_6)
   \cdot \sum_{r_3\in \mathbb{F}_{q^3}}{\vartheta
    \left( A_{17}^*\phi_0 ( -t_1^*r_3^{q^2+q}) \right)}.
\end{align*}
Thus all the values of $\chi_{6,q^4}^{A_{17}^*,A_{12}}$ are obtained.
 \item[(2)] {\it Calculate the values of  $\chi_{3,q}^{A_{13}^*,\bar{A}_{12}^{A_{13}^*}}$
 for all $A_{13}^*\in \mathbb{F}_{q^3}^*$
 and $\bar{A}_{12}^{A_{13}^*} \in T^{A_{13}^*}$}.

We use the notations of (2) of Proposition \ref{construction of irr. char. of 3D4}.
For $x=x(t_1,t_2^*,t_3,t_4,t_5,t_6)\in U$ with $0 \neq t_2^*\in \mathbb{F}_q^*$, we have
\begin{align*}
 \chi_{3,q}^{A_{13}^*,\bar{A}_{12}^{A_{13}^*}}(x)
=& \left(\mathrm{Ind}_{\bar{H}}^{\bar{U}} \bar{\chi}_{3,q}^{A_{13}^*,\bar{A}_{12}^{A_{13}^*}}\right)
   (\bar{x})
= \frac{1}{|\bar{H}|}
  \sum_{\substack{\bar{u}\in\bar{U}
                    \\ \bar{u}\cdot \bar{x}\cdot \bar{u}^{-1} \in \bar{H}}}
  \bar{\chi}_{3,q}^{A_{13}^*,\bar{A}_{12}^{A_{13}^*}}
                (\bar{u}\cdot \bar{x}\cdot \bar{u}^{-1})
\stackrel{\bar{u}\cdot \bar{x}\cdot \bar{u}^{-1} \notin \bar{H}}{=}0 .
\end{align*}
If $x=x(t_1,0,t_3,t_4,t_5,t_6)\in U$, we have
\begin{align*}
 &\chi_{3,q}^{A_{13}^*,\bar{A}_{12}^{A_{13}^*}}\big(x(t_1,0,t_3,t_4,t_5,t_6)\big)
= \left(\mathrm{Ind}_{\bar{H}}^{\bar{U}} \bar{\chi}_{3,q}^{A_{13}^*,\bar{A}_{12}^{A_{13}^*}}\right)
   \big(\bar{x}_1(t_1)\bar{x}_{3}(t_3)\big)\\
=& \frac{1}{|\bar{H}|}
  \sum_{\substack{\bar{u}:=\bar{x}_2(r_2)\bar{x}_1(r_1)\bar{x}_3(r_3)\in\bar{U}
                    \\ \bar{u}\cdot \bar{x}_1(t_1)\bar{x}_{3}(t_3)\cdot \bar{u}^{-1} \in \bar{H}}}
  \bar{\chi}_{3,q}^{A_{13}^*,\bar{A}_{12}^{A_{13}^*}}
                (\bar{u}\cdot \bar{x}_1(t_1)\bar{x}_{3}(t_3)\cdot \bar{u}^{-1})\\
=
     & \frac{1}{|\bar{H}|}\sum_{\substack{r_1,r_3\in\mathbb{F}_{q^3}\\r_2\in\mathbb{F}_{q}}}
     \bar{\chi}_{3,q}^{A_{13}^*,\bar{A}_{12}^{A_{13}^*}}(\bar{x}_1(t_1)\bar{x}_{3}(t_3+r_2t_1))\\
=& \sum_{r_2\in\mathbb{F}_{q}}
   \bar{\chi}_{3,q}^{A_{13}^*,\bar{A}_{12}^{A_{13}^*}}(\bar{x}_1(t_1)\bar{x}_{3}(t_3+r_2t_1))
= \sum_{r_2\in \mathbb{F}_{q}}\vartheta \pi_q
    \left(\bar{A}_{12}^{A_{13}^*}t_1-A_{13}^*({t}_3+r_2t_1)\right).
\end{align*}
Thus we get all the values of $\chi_{3,q}^{A_{13}^*,\bar{A}_{12}^{A_{13}^*}}$.
\end{itemize}

Similarly, we calculate all of the other values of the character table.
\end{proof}

In the paper, $p$ is an odd prime.
By Propositions \ref{prop:conjugacy classes-3D4},
\ref{construction of irr. char. of 3D4}
and \ref{prop:character table-3D4},
we obtain the number of the conjugacy classes,
and determine the numbers of the complex irreducible characters of degree $q^c$ with $c\in \mathbb{N}$.
T. Le \cite{Le} counted the number of irreducible characters for all primes $p$.
\begin{Corollary}\label{Higman conjecture-3D4}
Let $\#\mathrm{Irr}$ be the number of irreducible characters of the Sylow $p$-subgroup $U$ of ${^3}D_4(q^3)$,
and $\#\mathrm{Irr}_c$ be the number of irreducible characters of $U$
of dimension $q^c$ with $c\in \mathbb{N}$,
then
{\small
\begin{align*}
\begin{array}{lll}
\#\mathrm{Irr}_4  &= q^4-q^3 &= (q-1)^4+3(q-1)^3+3(q-1)^2+(q-1),\\
\#\mathrm{Irr}_3  &= q^5-q   &= (q-1)^5+5(q-1)^4+10(q-1)^3+10(q-1)^2+4(q-1),\\
\#\mathrm{Irr}_1  &= q^5-q^2 &= (q-1)^5+5(q-1)^4+10(q-1)^3+9(q-1)^2+3(q-1),\\
\#\mathrm{Irr}_0  &= q^4 &= (q-1)^4+4(q-1)^3+6(q-1)^2+4(q-1)+1,
\end{array}
\end{align*}
}%
and
\begin{align*}
    \#\mathrm{Irr}
 =& \# \{\text{Irreducible Characters of } U  \}\\
 =& \# \{\text{Conjugacy Classes of } U\}\\
 =& 2q^5+2q^4-q^3-q^2-q\\
 =& 2(q-1)^5+12(q-1)^4+27(q-1)^3+28(q-1)^2+12(q-1)+1.
\end{align*}
\end{Corollary}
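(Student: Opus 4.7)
The plan is to treat this corollary as an arithmetic bookkeeping exercise, drawing all the raw data from Proposition~\ref{construction of irr. char. of 3D4} (for the irreducible characters, grouped into the five families $\mathfrak{F}_{lin}$, $\mathfrak{F}_{3}$, $\mathfrak{F}_{4}$, $\mathfrak{F}_{5}$, $\mathfrak{F}_{6}$) and from Table~\ref{table:conjugacy classes-3D4} (for the conjugacy classes). Since every irreducible character is placed in exactly one family and each family carries an explicit parameter set, the counts $\#\mathrm{Irr}_c$ will be obtained by simply multiplying out the cardinalities of the parameter sets; the equality between $\#\mathrm{Irr}$ and the number of conjugacy classes is then automatic, but must also be cross-checked directly against the row-sums of Table~\ref{table:conjugacy classes-3D4}.

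I would first split by degree. The linear characters (degree~$q^0$) form $\mathfrak{F}_{lin}$, parameterized by $A_{12}\in \mathbb{F}_{q^3}$ and $A_{23}\in \mathbb{F}_q$, so $\#\mathrm{Irr}_0 = q^3 \cdot q = q^4$. The characters of degree~$q$ are precisely $\mathfrak{F}_{3}$, parameterized by $A_{13}^*\in \mathbb{F}_{q^3}^*$ and $\bar{A}_{12}^{A_{13}^*}\in T^{A_{13}^*}$; by Notation~\ref{notation:Ta} we have $|T^{A_{13}^*}|=q^2$, giving $\#\mathrm{Irr}_1 = (q^3-1)q^2 = q^5 - q^2$. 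The degree-$q^3$ characters split as $\mathfrak{F}_{4} \,\dot\cup\, \mathfrak{F}_{5}$, of sizes $(q^3-1)\cdot q = q^4 - q$ and $(q-1)\cdot q \cdot q^3 = q^5 - q^4$ respectively, totaling $q^5 - q$. The degree-$q^4$ characters form $\mathfrak{F}_{6}$, parameterized by $A_{17}^*\in \mathbb{F}_q^*$ and $A_{12}\in \mathbb{F}_{q^3}$, giving $\#\mathrm{Irr}_4 = (q-1)q^3 = q^4 - q^3$. Summing yields $\#\mathrm{Irr} = 2q^5 + 2q^4 - q^3 - q^2 - q$.

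To confirm this equals the number of conjugacy classes, I would read off column~2 of Table~\ref{table:conjugacy classes-3D4} and add: $1 + 2(q-1) + 3(q^3-1) + (q^3-1)q\cdot 2 + (q-1)q^4 + (q^3-1)(q^2-1) + (q-1)(q^3-1)$, and simplify to $2q^5 + 2q^4 - q^3 - q^2 - q$, matching $\#\mathrm{Irr}$ as required by the general identity $\#\mathrm{Irr}(G)=\#\{\text{conjugacy classes of }G\}$. The rewriting as polynomials in $(q-1)$ is then a mechanical substitution $q = (q-1)+1$ and binomial expansion, which can be checked family by family.

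The only genuine point where error is likely is the count of $\mathfrak{F}_3$: one must use $|T^{A_{13}^*}| = q^2$ (from Lemma~\ref{phi_0,3D4}) rather than $q^3$, reflecting that distinct $\bar{A}_{12}$ in the same coset of $A_{13}^*\mathbb{F}_q^+$ induce conjugate (hence identical after induction) characters under the $\bar{X}_2$-action described in the proof of part~(2) of Proposition~\ref{construction of irr. char. of 3D4}. No other step is subtle; once that parameter space is correctly identified, the rest is arithmetic.
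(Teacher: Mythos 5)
Your proposal is correct and takes essentially the same route as the paper, which likewise obtains these counts by reading off the parameter sets of the five families in Proposition \ref{construction of irr. char. of 3D4} and cross-checking against Table \ref{table:conjugacy classes-3D4}; all of your family counts ($q^4$, $(q^3-1)q^2$, $(q^3-1)q+(q-1)q^4$, $(q-1)q^3$) and the key observation that $|T^{A_{13}^*}|=q^2$ are right. One slip: in your conjugacy-class cross-check the term $3(q^3-1)$ should be $(q^3-1)$ --- only the single row $x_4(t_4^*)$ contributes exactly $q^3-1$ classes, and your tally as written accounts for $11$ rows rather than the $9$ in the table and sums to $2q^5+2q^4+q^3-q^2-q-2$, not the claimed value. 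With that coefficient corrected the sum is $1+2(q-1)+(q^3-1)+2(q^3-1)q+(q-1)q^4+(q^3-1)(q^2-1)+(q-1)(q^3-1)=2q^5+2q^4-q^3-q^2-q$, matching $\#\mathrm{Irr}$ as required.
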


\begin{Remark}
We also consider the analogue of Higman's conjecture,
Lehrer's conjecture and Isaacs' conjecture of $U_n(q)$
for $U$.
By \ref{Higman conjecture-3D4}, the conjectures are true for $U$.
\end{Remark}


\section*{Acknowledgements}
This paper is part of my PhD thesis \cite{sunphd} at the University of Stuttgart, Germany,
so I am deeply grateful to my supervisor Richard Dipper.
I also would like to thank  Markus Jedlitschky,
Mathias Werth
and Yichuan Yang for the
helpful discussions and valuable suggestions.



\bibliographystyle{alpha}
\bibliography{bibliography3D4irr}


\end{document}